\renewcommand{\emph}{\textbf}
\newcommand{\A}{\mathbb{A}}
\newcommand{\D}{\mathbb{D}}
\newcommand{\E}{\mathbb{E}}
\newcommand{\F}{\mathbb{F}}
\newcommand{\N}{\mathbb{N}}
\renewcommand{\P}{\mathbb{P}}
\newcommand{\R}{\mathbb{R}}
\newcommand{\T}{\mathbb{T}}
\newcommand{\W}{\mathbb{W}}
\newcommand{\Z}{\mathbb{Z}}
\newcommand{\cA}{\mathcal{A}}
\newcommand{\cE}{\mathcal{E}}
\newcommand{\cF}{\mathcal{F}}
\newcommand{\cI}{\mathcal{I}}
\newcommand{\cL}{\mathcal{L}}
\newcommand{\cM}{\mathcal{M}}
\newcommand{\cO}{\mathcal{O}}
\newcommand{\cP}{\mathcal{P}}
\newcommand{\cR}{\mathcal{R}}
\newcommand{\cT}{\mathcal{T}}
\newcommand{\cU}{\mathcal{U}}
\newcommand{\cZ}{\mathcal{Z}}
\renewcommand{\a}{\alpha}
\renewcommand{\b}{\beta}
\newcommand{\g}{\gamma}
\newcommand{\dl}{\delta}
\newcommand{\eps}{\varepsilon}
\renewcommand{\th}{\theta}
\renewcommand{\k}{\kappa}
\renewcommand{\l}{\lambda}
\newcommand{\s}{\sigma}
\newcommand{\vs}{\varsigma}
\newcommand{\om}{\omega}
\newcommand{\gTh}{\Theta}
\newcommand{\Om}{\Omega}
\newcommand{\ds}{\: \mathrm{d}s}
\newcommand{\du}{\: \mathrm{d}u}
\newcommand{\dx}{\: \mathrm{d}x}
\newcommand{\dds}{\mathrm{d}s}
\newcommand{\ddu}{\mathrm{d}u}
\newcommand{\ddx}{\mathrm{d}x}
\newcommand{\ddy}{\mathrm{d}y}
\newcommand{\la}{\langle}
\newcommand{\ra}{\rangle}
\newcommand{\bsl}{\backslash}
\newcommand{\pinf}{+ \infty}
\newcommand{\eqlaw}{\overset{(d)}{=}}
\newcommand{\convlaw}{\overset{(d)}{\to}}
\newcommand{\emp}{\emptyset}
\newcommand{\un}{\mathds{1}}
\newcommand{\unn}[1]{\mathds{1}_{\left \{ #1 \right \} }}
\newcommand{\Id}{\mathrm{Id}}
\newcommand{\sh}{\mathop{\mathrm{sh}}}
\renewcommand{\tanh}{\mathop{\mathrm{th}}}
\newtheorem{thm}{Theorem}[section]
\newtheorem{prop}[thm]{Proposition}
\newtheorem{lemma}[thm]{Lemma}
\theoremstyle{definition}
\newtheorem{defn}[thm]{Definition}
\theoremstyle{remark}
\newtheorem{remark}[thm]{Remark}
\def\maketitle{%
  \null%
  \renewcommand{\thefootnote}{\fnsymbol{footnote}}
  \begin{center}\leavevmode
    \normalfont
    {\LARGE \@title\par}%
    \vskip 0.6cm
    {\large \@author\footnote{\@location. \par E-mail address: \@email}\par}%
  \end{center}%
  \null
	\renewcommand{\thefootnote}{\arabic{footnote}}
	\setcounter{footnote}{0}
  }
\def\location#1{\def\@location{#1}}
\def\email#1{\def\@email{#1}}
\newcommand{\vsinf}{\vs_{\infty}}
\newcommand{\f}{\mathrm{f}}
\newcommand{\df}{\mathrm{d}\f}
\newcommand{\piN}{\pi^{(N)}}
\newcommand{\gN}{\g^{(N)}}
\newcommand{\YN}{Y^{(N)}}
\newcommand{\XNf}{X_{\f}^{(N)}}
\newcommand{\Xf}{X_{\f}}
\newcommand{\rN}{r^{(N)}}
\newcommand{\cZN}{\cZ^{(N)}}
\newcommand{\cZNr}{\cZ^{(N),\downarrow}}
\newcommand{\FN}{\F^{(N)}}
\newcommand{\FNr}{\F^{(N),\downarrow}}
\newcommand{\nuN}{\nu^{(N)}}
\newcommand{\tpi}{\widetilde{\pi}}
\newcommand{\rma}{\mathrm{a}}
\newcommand{\rmaN}{\mathrm{a}^{(N)}}
\newcommand{\trho}{\tilde{\rho}}
\newcommand{\SN}{S^{(N)}}
\newcommand{\LN}{L^{(N)}}
\newcommand{\bfe}{\mathbf{e}}
\newcommand{\n}{\mathbf{n}}
\newcommand{\m}{\mathbf{m}}
\newcommand{\PN}{\cP^{(N)}}
\newcommand{\Tmigr}{T_{\mathrm{migr}}}
\newcommand{\tc}{\tilde{c}}
\newcommand{\cMinf}{\cM^+_{\infty}}
\newcommand{\tpiN}{\widetilde{\pi}^{(N)}}
\newcommand{\tgN}{\widetilde{\g}^{(N)}}
\title{Two population models with constrained migrations}
\author{Raoul \textsc{Normand}}
\email{raoul.normand@upmc.fr}
\begin{document}

\maketitle

\begin{abstract}
We study two models of population with migration. We assume that we are given infinitely many islands with the same number $r$ of resources, each individual consuming one unit of resources. On an island lives an individual whose genealogy is given by a critical Galton-Watson tree. If all the resources are consumed, any newborn child has to migrate to find new resources. In this sense, the migrations are constrained, not random. We will consider first a model where resources do not regrow, so the $r$ first born individuals remain on their home island, whereas their children migrate. In the second model, we assume that resources regrow, so only $r$ people can live on an island at the same time, the supernumerary ones being forced to migrate. In both cases, we are interested in how the population spreads on the islands, when the number of initial individuals and available resources tend to infinity. This mainly relies on computing asymptotics for critical random walks and functionals of the Brownian motion.
\end{abstract}

\begin{scriptsize}
\noindent \textit{MSC 2010}: 60J80, 60F05, 60J70

\noindent \textit{Keywords}: Population model, random measure, weak convergence, branching process, migration, Brownian motion
\end{scriptsize}

\section{Introduction}

The incentive for this work is the series of three papers by Bertoin (see \cite{BertoinToA,BertoinAP,BertoinAR} and references therein) considering population models with neutral mutations, the latter occurring randomly. Mutations can also be viewed as migrations from an island to another, the individuals living on the same island being exactly those with the same alleles. While considering random mutations is natural, it is just as legitimate to assume that migrations do not happen randomly but are constrained: individuals will migrate when they need to find new resources to survive.

We shall study two models, which can be loosely described as follows. Individuals live on different islands, and each consumes one unit of resources to live. If one is born on an island where the resources have run out, they will migrate to a virgin island (that is, where no one has ever lived) and found their own colony. We shall thus assume that there are infinitely many virgin islands, and moreover, that each contains the same quantity of resources, denoted $r$ in the following. Our two models differ only by one point: in the first one, the resources do not regrow, whereas they do in the second. In other words, in the first model, when $r$ individuals have lived on an island, the next ones being born on this same island will migrate. In the second model, when $r$ individuals live on an island at the same time, and more individuals are born at that time, the latter have to migrate. Our models are reminiscent of the virgin island model of Hutzenthaler \cite{Hutzenthaler}, though once again, in our case, migrations are not random.

Our goal is to study how the population spreads on the different islands, that is, compute the number of islands where $k$ people live, for $k \in \N$. We do not wish to give a dynamical version of this, and we will thus wait until the population is extinct, and compute the number $Z_k$ of islands where $k$ people have lived. To this end, we shall study the measure
\[
\sum_{k \in \N} Z_k \dl_k.
\]
This is thus probably a good time to be more precise and introduce the quantities we will consider.

For both models, our study shall be fourfold. The first step is to construct precisely the object of interest. This will be called the tree of isles, in reference to the tree of alleles of \cite{BertoinToA}, and it will encode all the relevant information. This is a multitype tree, and when referring to it, we shall adopt an unusual language. A vertex of this tree corresponds to an occupied island and we will thus call its vertices ``islands''. A descendant of an island $\cI$ is an island founded by a migrant coming from $\cI$, so we call ``colonies'' these descendants. Finally, the type of an island will be the total number of individuals who lived on it, and we shall thus instead refer to this type as its ``population''.

The construction of the tree of isles is deterministic from a given finite tree. Consider the latter as the genealogical tree of the first existing individual, whom we shall impiously call Lucy\footnote{Which is the name given to the earliest known hominin, before the subsequent discovery of the earlier remains of ``Ardi''. Another reason for this choice is that lately, the literature has grown quite fond of the name ``Eve cluster'', designating thus a group of individuals, not a single one.}. Her children may have to migrate to find new resources -- obviously, this depends on the model. They move each to a different island and found their own colony, from which other individuals may migrate, and so on. Then, the root of the tree of isles corresponds to the home island of Lucy, its colonies to the islands founded by the migrants from this island, and so on. We conclude the construction of the tree of isles by attaching to each island its population.

The second step of our study is to find a relevant way to compute the population and number of colonies of the home island of Lucy, i.e. of the root of the tree of isles. We will be able to encode this quantities through an exploration process of the genealogical tree of Lucy, and for each model, we shall give an algorithm to construct this process in such a way that this information can be easily read on it. For the first model, the construction is simple since it is just the usual breadth-first search algorithm. Unfortunately, for the second model, we are led to writing quite abstruse definitions, but the figures should be enough to enlighten the reader.

The third step is now to add some randomness, in taking the genealogical tree of Lucy to be a Galton-Watson tree. Let us fix once and for all a reproduction law $\rho$, which is critical, i.e. has mean 1, and a second moment $0 < \s^2 < \pinf$. In particular, this tree is a.s. finite, and we can construct the tree of isles  $\om$-wise. It will be clear that the branching property of the initial tree carries on to to the tree of isles, which turns out to be a multitype Galton-Watson tree. This explains in particular why, in the previous paragraph, we were only interested in encoding the population and number of colonies of the root of the tree of isles: once we know this is a multitype Galton-Watson tree, these quantities are indeed enough to characterize it.

This ends what we can do starting from one individual and a fixed number of resources. Our last step is to pass to the limit, in different senses which will be made precise later. To this end, we will start from a number $N$, meant to go to infinity, of initial individuals spread on $N$ different islands. Their genealogies are assumed to be given by i.i.d. Galton-Watson trees with reproduction law $\rho$. We wish to rescale the number of resources so that, on every island, there is a probability of order $1/N$ that some people migrate. This is similar to the rate of mutation considered in \cite{BertoinToA}, or what is classically done in the Wright-Fisher model when considering rare mutations. In this case, there shall thus be a Poisson number of these initial islands which will have colonies. Clearly, this implies that there should be a number $r = r_N$ of resources tending to infinity, but the precise speed actually depends on the model. The main part of the work will be to compute the empirical measure describing how the population is spread on the different islands, more precisely its limit  after a proper rescaling. We will also provide a direct way to construct this limiting measure, which enlightens the structure of the tree of isles.

Finally, for the first model, we will provide an extra result, concerning the limit of the tree of isles in terms of trees. In other words, the goal here is to keep track of the genealogy of the islands, which is lost in the mere computation of the empirical measure. A similar result could be obtained for the second model, but it would just be a technical modification of the first one, and would not bring, we think, more understanding of the model.

The paper is organized in four sections following this introduction. The second is devoted to defining the objects we will consider and explain the techniques we shall use. In the third, we will recall some definitions about exploration processes, most of it being rephrasing of known (but quite sparse in the literature) folklore. The fourth and fifth section then deal each with one model. The second model is more involved than the first one and we can thus only advise the reader to follow the given order. We however hope to show that the second model has an interest on its own, on the one hand because it is probably more natural to consider regrowing resources, and on the other hand because it leads to the computation of nice asymptotics for random walks and formulas for the Brownian motion.

\paragraph{Acknowledgments} The author would like to thank Jean Bertoin for suggesting to study this topic, and for his always accurate insights. Heartily thanks also to Lorenzo Zambotti, Amaury Lambert and Olivier H\'enard for stimulating discussions, as well as to Marc Yor for providing several less known results on Bessel processes.

\section{Definitions and techniques}

\subsection{Trees}

We shall always consider rooted and ordered trees, thus having natural notions of ancestors, descendants, or individuals being on the right or the left of another. We shall adopt the following classical formalism. First, define the universal tree
\[
\cU = \bigcup_{n \in \Z^+} \N^n
\]
with $\N^0=\{\emptyset\}$, and let $\cU^*=\cU \bsl \{\emp\}$. If $u \in \N^k$, we say that $u$ is at generation $k$ and write $|u|=k$. The root is $\emp$, and the children of an individual $u=(u_1,\dots,u_k)$ at generation $k$ are $u\,j:=(u_1,\dots,u_k,j)$ for $j \in \N$. We call a tree rooted at $\emp$ a subset $\cT$ of $\cU$ such that
\begin{itemize}
	\item $\emp \in \cT$;
	\item if $v \in \cT$ and $v = u \, j$ for some $u \in \cU$ and $j \in \N$, then $u \in \cU$;
	\item for every $u \in \cT$, there exists a number $k_u(\cT) \in \Z^+$ such that $u \, j \in \cT$ if and only if $j \leq k_u(\cT)$.	
\end{itemize}
The quantity $k_u(\cT)$ is the number of children, or descendants, of $u$ ($k$ stands for ``kids''). A tree rooted at $u \in \cU$ is a subset $\cT$ of $\cU$ which can be written as $u \, \cT'$, where $\cT'$ is a tree rooted at $\emp$. If no detail is given, a tree will always be assumed to be rooted at $\emp$, though we will just call tree a tree rooted at another vertex if the context is clear. A Galton-Watson tree (with types or not) can thus be defined in a natural way, see \cite{DuquesneLeGall,LegallMBPB}. We let $s(\cT)$ to be the size of $\cT$, that is, its cardinality. A labeling of a tree $\cT$ is a bijection between $\cT$ and $\{1, \dots, s(\cT) \}$, and, given a labeling, we may refer to ``vertex $i$'' instead of ``the vertex labeled $i$''.

We call these trees discrete and will also use, for the second model, continuous trees, that is trees whose branches have lengths, interpreted as life-lengths of the individuals: the latter give birth to all of their children just before dying. Let us not dwell on an obscure formal definition, since once again, figures shall make things lucid.

A multitype tree $\cA$ is a tree such that a type $\cA_u$ is attached to each vertex $u$ of the tree. The types here will be positive, and we will give type 0 to the vertices which do not belong to $\cA$, i.e. define $\cA_u = 0$ for $u \in \cU \bsl \cA$. In particular, the set of vertices with non-zero type is a tree, and we will thus say that $u \in \cA$ if $\cA_u \neq 0$.

\subsection{Construction of the tree of isles} \label{sec:treeofisles}

Let us start by recalling that the trees involved in the first model will be discrete, whereas they shall be continuous for the second model. Let us fix a tree $T$, interpreted s the genealogical tree of an individual funding an island $\cI$. For each model, we will define (respectively in Section \ref{sec:treeofisles1} and \ref{sec:treeofisles2}) the migrant children of a tree $T$, which is just a subset of vertices of $T$. We denote $C(T)$ their number, which is, in our terminology, the number of colonies of $\cI$, though we may also refer to it as the number of colonies of $T$. These individuals are roots of disjoint\footnote{This will be clear from the precise definition of these migrant children.} trees which we will denote $\Tmigr^1, \dots, \Tmigr^{C(T)}$. After we cut off these subtrees from $T$, we obtain a pruned tree, which describes the genealogy of this individual restricted to its offspring living on $\cI$. The size of this pruned tree shall be denoted $P(T)$, and is the population of $\cI$, or of $T$. We start with the genealogical tree $\cT$ of Lucy. 

This notation obviously depend on the model, but we shall not specify it: clearly, it refers to the first model in Section \ref{sec:firstmodel} and to the second one in Section \ref{sec:secondmodel}. However, it depends on the number of resources we consider, and we shall add an index $r$ if necessary, writing $P_r(\cT)$ and $C_r(\cT)$ to specify that we deal with $r$ resources.

The tree of isles of $\cT$, $\cA$, can then be constructed recursively as follows. Recall that it is a multitype tree, whose vertices are called ``islands'', types ``population'' and descendants ``colonies''.
\begin{itemize}
	\item First, take $X = \{ (\emp,\cT) \}$.
	\item At each step, if $X = \emp$ then stop. Otherwise, pick some $(v,T)$ in $X$, and remove it from $X$. Add island $v$ to $\cA$, and give it population $P(T)$ and $C(T)$ colonies (if any). Then, for $i \in \{ 1, \dots, C(T) \}$, add $(v \, i, \Tmigr^i)$ to $X$, and go to the next step.
\end{itemize}
It should be clear that this algorithm provides a multitype tree, which is, by definition, the tree of isles constructed from $\cT$.

It is already worth keeping in mind the following properties of the tree of isles, which shall be proven later:
\begin{itemize}
	\item its construction is deterministic;
	\item the information which is relevant to our study is encoded in the tree of isles;
	\item if the initial genealogical tree is a Galton-Watson tree, then the tree of isles is a multitype Galton-Watson tree.
\end{itemize}

Let us finally take advantage of this section to fix some notation. A deterministic tree shall be denoted $\cT$, and its tree of isles $\cA$ (from French ``arbre'', tree). A random tree (with no type, and it will always be a Galton-Watson tree) will be denoted $\T$, and the corresponding tree of isles $\A$. Note once again that the latter is just obtained deterministically from $\T$: each $\T(\om)$ gives rise to a $\A(\om)$.

\subsection{Notation and empirical measure} \label{sec:empmeas}

Let us first introduce some notation.
\begin{itemize}
	\item $C_K$ is the space of nonnegative continuous functions with compact support in $(0, \pinf)$;
	\item $\cM^+$ is the space of nonnegative Radon measures on $(0,\pinf)$, endowed with the vague topology (i.e. the space of test functions is $C_K$);
	\item we denote $\Rightarrow$ the convergence in $\cM^+$, and $\convlaw$ the usual convergence in distribution of real random variables;
	\item for $\cP \in \cM^+$ and $f \in C_K$, $\la \cP, f \ra$ is the integral of $f$ with respect to $\cP$;
	\item finally, for notational simplicity, in the whole document, we do not write the integer parts.
\end{itemize}

As mentioned in the introduction, we want to start with a large number of Galton-Watson trees with reproduction law $\rho$, which has mean 1 and a second moment $0 < \s^2 < \pinf$. We will thus pick $(\Om, \cF, \P)$ a probability space where an i.i.d. family $(\T^k)_{k \geq 0}$ of those trees can be defined. We can then construct, for each model, the corresponding trees of isles $(\A^k(r))_{k \geq 0}$ corresponding to $r$ resources, which are also i.i.d. since the construction of the tree of isles is deterministic. 

Remember that we wish to have the number $N$ of initial islands, as well as the number of resources $r_N$ on each island, tend to infinity, and to study the empirical measure of the population. We thus define, for each $r,n \in \N$,
\[
\cP_n(r) = \sum_{k=1}^n \sum_{u \in \cU} \dl_{\A_u^k(r)} \unn{\A_u^k(r) \neq 0}.
\]
In other words, the mass of $\cP_n(r)$ at $l \in \N$ is the number of islands where $l$ people have lived, when starting from $n$ islands and considering $r$ resources. We are interested in the convergence of $\cP_N(r_N)$, but to obtain a nontrivial limit, we will need to rescale it, by setting
\[
\PN_n(r) = \sum_{k=1}^n \sum_{u \in \cU} \dl_{\A_u^k(r)/N^2} \unn{\A_u^k(r) \neq 0}.
\]
This rescaling actually implies (see Lemma \ref{lem:tightness}) the tightness of $(\PN_N(r_N))$ in $\cM^+$. Clearly, $\PN_n(r)$ is just the sum of $n$ independent copies of the reference measure $\PN(r) := \PN_1(r)$. We also define $\T := \T^1$ and $\A(r) := \A^1(r)$, so $(\T^k)$ (resp. $(\A^k(r))$) is a family of i.i.d. copies of $\T$ (resp. $\A(r)$). Finally, we let, for $p \geq 1$, $i \geq 0$,
\[
Z_{i,p}(r) = \# \{ u \in \cU, \, |u| = i, \, \A_u(r) = p \},
\]
so we can rewrite
\[
\PN(r) = \sum_{p \geq 1} \dl_{p/N^2} \sum_{i \geq 0} Z_{i,p}(r)
\]
what will turn out to be very useful to write an equation solved by the cumulant of $\PN_N(r_N)$: recall that the cumulant of a random measure $\cP$ on $\cM^+$ is
\[
\k(f) = - \ln \E \left ( \exp - \la \cP , f \ra \right )
\]
for $f \in C_K$, and that the knowledge of $\k(f)$ for every $f \in C_K$ characterizes $\cP$.

\subsection{General technique of proof} \label{sec:technique}

To compute the limit of $(\PN_N(r_N))$, we will proceed in several steps. The first is to prove the tightness, which is easy. The second is to prove the uniqueness of the limit. To this end, we first give an equation solved by the cumulant $\k_N(f)$ of $\PN_N(r_N)$, which is obtained thanks to the branching property of the tree of isles. We then pass to the limit in this equation to obtain an equation solved by any subsequential limit of $\k_N(f)$, which is readily proved to have a unique solution.  This guarantees, with the tightness, the convergence of $(\PN_N(r_N))$.

The main part of this work is to compute the limit of the unknown quantity involved in the aforementioned equation, namely the population and number of colonies of the root of the tree of isles. As we mentioned, these quantities can be read on a random walk (which is the exploration process of the genealogical tree of Lucy), and our work will then boil down to finding limits of functionals of random walks. This is undoubtedly the most technical part of the work; however, if we leave aside the necessary computations, most of the results should not come as a surprise.

\section{Exploration processes}

In this section, we shall provide a general way to build the exploration process of a tree. The goal of this construction is that, if this tree is a Galton-Watson tree, then its exploration process is a random walk. These matters are classical, and we shall not dwell on the proofs.

\subsection{A general construction of the exploration process} \label{sec:explproc}

Consider a finite tree $\cT$, with size $s = s(\cT)$. There are several ways to label it, i.e. write a bijection between the set of vertices and $\{1,\dots,s\}$, the most common being by breadth-first and depth-first search. From a labeling, we may construct the exploration process as follows: denote $k_i$ the number of children of (the individual labeled) $i$. Then the exploration process is given by $S_0 = 0$ and
\[
S_i = (k_1 - 1) + \dots + (k_i - 1), \quad i = 1 \dots s,
\]
and we let $S_i = -1$ for $i > s$ for the sake of definiteness. We will extend this process to the whole of $\R^+$ by interpolating linearly for the first model, whereas we will instead set $S_t = S_{\lfloor t \rfloor}$ for the second model. When $\cT$ is a Galton-Watson tree with reproduction law $\rho$, and when it is labeled by breadth-first or depth-first search, it is well-known that $(S_n)$ is a left-continuous random walk, with step distribution $\trho$, absorbed at $-1$ at time $s$, where $\trho(k) = \rho(k+1)$ for $k \geq -1$.

Let us now explain a more general way to label $\cT$ so as to conserve this last property. Let $\cT$ be a discrete or continuous tree. We say that $\ell$ is a \emph{line} in $\cT$ if $\ell$ is a set of vertices such that every path from the root to a leaf contains at most one vertex of $\ell$. We now define $\cT_{\ell}$ the tree pruned at level $\ell$, as follows.
\begin{itemize}
	\item In the discrete case, $\cT_{\ell}$ is the connected component of the root when we delete the descendants of the individuals of $\ell$: see Figure \ref{fig:pruneddisctree}.
	\item In the continuous case, $\cT_{\ell}$ is the connected component of the root when we delete the descendants of the individuals of $\ell$, but keep the life-lengths of all the individuals in $\ell$: see Figure \ref{fig:prunedconttree}.
\end{itemize}

\begin{figure}[htb]
\centering
\includegraphics[width=0.75 \columnwidth]{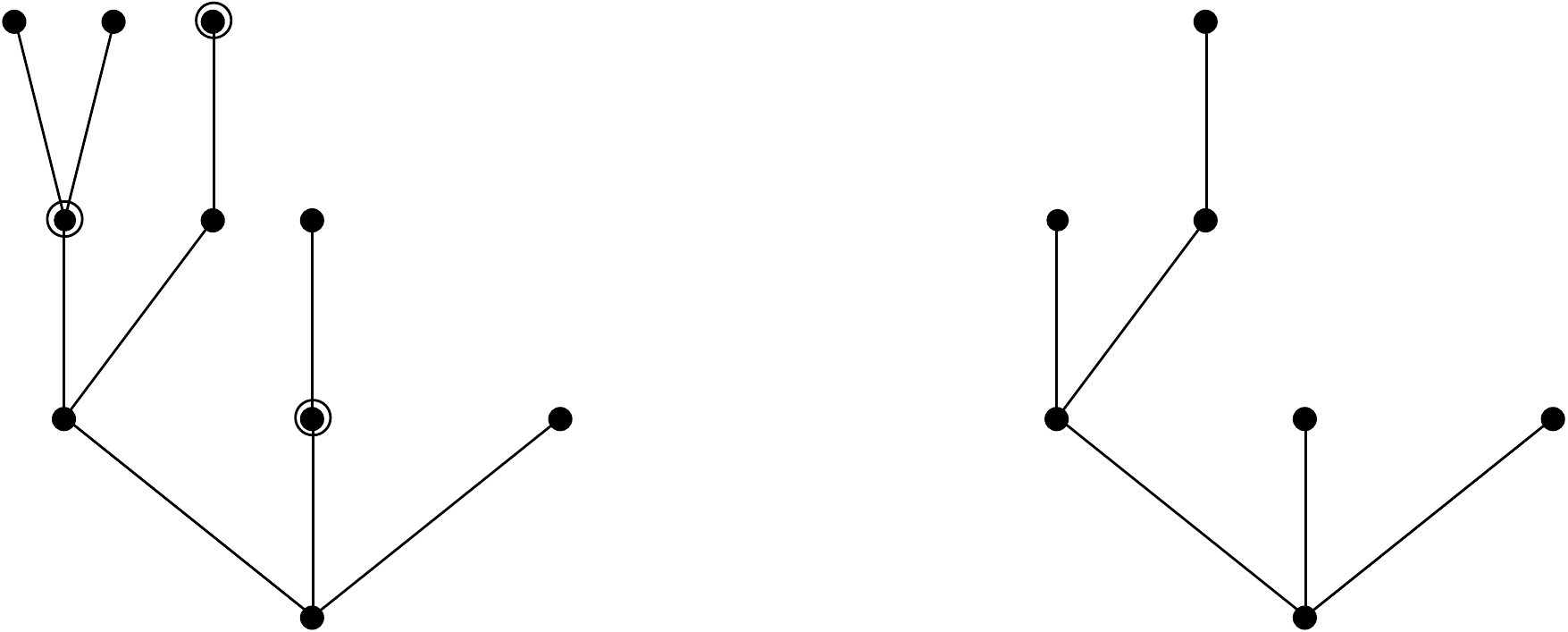}
\caption{A discrete tree $\cT$, a line $\ell$ (the circled vertices), and the tree $\cT_{\ell}$ pruned along this line.}
\label{fig:pruneddisctree}
\end{figure}

\begin{figure}[htb]
\centering
\includegraphics[width=0.75 \columnwidth]{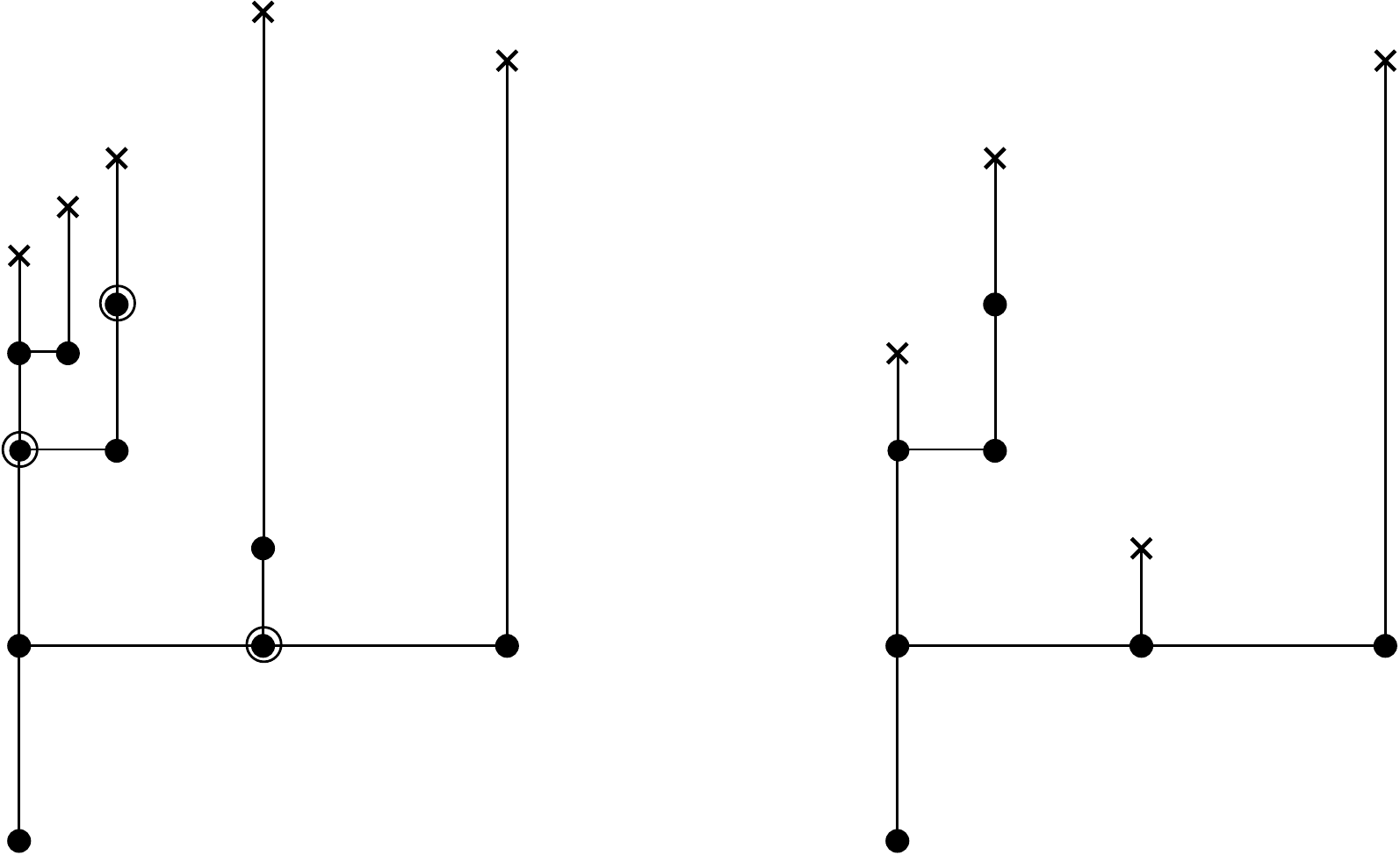}
\caption{A continuous tree $\cT$, a line $\ell$ (the circled vertices), and the tree $\cT_{\ell}$ pruned along this line. Note that we cut the descendants of the individuals in $\ell$, not the individuals in $\ell$, so the right-most individuals still remain in $\cT_{\ell}$.}
\label{fig:prunedconttree}
\end{figure}

We will label our tree $\cT$ by giving labels $1$ to $s(\cT)$ in this order, the label 1 going to the root. To explain which vertex we label $i+1$ after we have given labels $1$ to $i$, we use what we call a \textbf{Markovian rule}.

\begin{defn} \label{def:rule}
Let $\cL$ be the set of nonempty lines in $\cT$. A Markovian rule $\cR$ is a mapping from $\cL$ to the set of vertices of $\cT$ such that, for every $\ell \in \cL$, $\cR(\ell) \in \ell$ and $\cR(\ell)$ depends only on $\cT_{\ell}$.
\end{defn}

The labeling of $\cT$ corresponding to this rule is then given by the following algorithm.

\begin{enumerate}
  \item At step 0, take $\ell$ to be the root of $\cT$, and go to step $1$.
 	\item At step $i$:
 	\begin{itemize}
	\item if $\ell = \emp$, then stop;
	\item if not, choose the vertex $v$ according to the rule $\cR$, i.e. take $v = \cR(\ell)$. Give $v$ the label $i$. Then remove $v$ from $\ell$, and add to $\ell$ the children of $v$;
	\item then go to step $i+1$.
	\end{itemize}
\end{enumerate}
It should be clear that when the algorithm stops, the tree is labeled. Let $(S_n)$ to be the exploration process corresponding to this labeling. We claim the following.

\begin{lemma} \label{lem:explproc}
\begin{itemize}
\item The size $\s(\cT)$ of the tree is the hitting time of $-1$ by $(S_n)$.
\item If $\cT = \T$ is a (discrete or continuous) Galton-Watson tree with reproduction law $\rho$, then, at each step of the labeling algorithm, conditionally on $\ell$, the subtrees rooted at the vertices of $\ell$ are i.i.d. with the same law as $\T$. In particular, $(S_n)$ has the law of a random walk with step distribution $\trho$, absorbed when first hitting $-1$.
\end{itemize}
\end{lemma}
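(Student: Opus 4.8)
The first bullet is essentially a restatement of the definition: by construction the labeling algorithm runs for exactly $s(\cT)$ steps, giving every vertex a label in $\{1,\dots,s(\cT)\}$, and then by our convention $S_i = -1$ for $i > s(\cT)$. So I would argue that $S_n \geq 0$ for $n < s(\cT)$ and $S_{s(\cT)} = -1$. The nonnegativity for $n < s(\cT)$ comes from the fact that, after labeling vertices $1,\dots,n$, the quantity $S_n = \sum_{j=1}^n (k_j - 1)$ counts precisely (number of vertices added to $\ell$ so far) $-$ (number removed from $\ell$ so far) $= \#\ell - 1$ where $\ell$ is the current line at step $n+1$; since the algorithm has not yet stopped, $\ell \neq \emptyset$, so $S_n = \#\ell - 1 \geq 0$. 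At step $s(\cT)$ the last vertex is a leaf with no children, $\ell$ becomes empty, and one checks $S_{s(\cT)} = \#\emptyset - 1 = -1$. Hence $s(\cT)$ is the first hitting time of $-1$. (One should note this identity $S_n = \#\ell - 1$ holds in the discrete case; in the continuous case the exploration process is defined via the underlying combinatorial tree structure, so the same bookkeeping applies to the discrete skeleton.)

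For the second bullet, the key is the \textbf{Markovian} property in Definition~\ref{def:rule}: the vertex $v = \cR(\ell)$ to be labeled next depends only on the pruned tree $\cT_\ell$, i.e. only on the portion of the tree already explored, and \emph{not} on the subtrees hanging off the vertices of $\ell$. I would prove the claimed invariant — ``conditionally on the current line $\ell$ (together with $\cT_\ell$), the subtrees rooted at the vertices of $\ell$ are i.i.d.\ copies of $\T$'' — by induction on the step $i$. At step $0$, $\ell = \{\emp\}$ and the single subtree rooted there is all of $\T$, which is a Galton--Watson tree by hypothesis: the base case holds. For the inductive step, suppose the invariant holds at step $i$ with line $\ell$. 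Since $\cR(\ell) = v$ is a deterministic function of $\cT_\ell$, conditioning further on which vertex is chosen does not disturb the conditional law of the subtrees hanging off $\ell$. Now $v \in \ell$, so by the inductive hypothesis the subtree $\T_v$ rooted at $v$ is, conditionally, an independent copy of $\T$; by the branching (Galton--Watson) property of $\T$, conditionally on $k_v$, the subtrees rooted at the $k_v$ children of $v$ are themselves i.i.d.\ copies of $\T$ and independent of everything explored so far. Removing $v$ from $\ell$ and inserting its children thus yields a new line $\ell'$ for which, conditionally on $\ell'$ and $\cT_{\ell'}$, the subtrees rooted at its vertices are again i.i.d.\ copies of $\T$. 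This closes the induction.

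Granting the invariant, the increment $S_i - S_{i-1} = k_{v} - 1$ at step $i$, where $v = \cR(\ell^{(i-1)})$: since $v$ lies in the current line, its number of children $k_v$ has law $\rho$ by the invariant, and it is conditionally independent of the past increments $S_1 - S_0, \dots, S_{i-1} - S_{i-2}$ (which are measurable with respect to the explored part $\cT_{\ell^{(i-1)}}$). Hence $(S_i - S_{i-1})$ is an i.i.d.\ sequence with common law $\trho(k) = \rho(k+1)$, run until the walk first hits $-1$, which by the first bullet is time $s(\cT) = \s(\cT)$. This is exactly the assertion that $(S_n)$ is a random walk with step distribution $\trho$ absorbed at $-1$.

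The only genuinely delicate point — and the one I would be most careful about — is the measurability/independence bookkeeping in the inductive step: one must make sure that ``conditionally on $\ell$'' is interpreted as conditioning on the full explored data $\cT_{\ell^{(i)}}$, that the map $\cR$ factoring through $\cT_\ell$ is what legitimizes treating the choice of $v$ as part of the conditioning without breaking the conditional i.i.d.\ structure, and that the branching property of $\T$ is invoked in the correct conditional form. None of this is hard, but it is the spot where a sloppy argument would hide a gap; accordingly, since the paper explicitly says these matters are classical, I would present the induction cleanly and relegate the routine measurability verifications to a remark.
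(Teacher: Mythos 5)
Your proof is correct and follows essentially the same route as the paper: the first bullet is the same bookkeeping induction (via the identity $S_n = \#\ell - 1$), and your inductive invariant for the second bullet is exactly the branching property along the explored line that the paper obtains by observing that a Markovian rule makes the edges above $\ell$ a stopping line in the sense of Chauvin. The only difference is that you prove this invariant directly by induction rather than citing the stopping-line result, which is a self-contained but equivalent argument.
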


\begin{proof}
The first part is classical and easily proven by induction. The second part is just the observation that, at each step of the labeling algorithm, the fact that we choose a Markovian rule ensures that the set of edges above $\ell$ is a stopping line, in the terminology of \cite{Chauvin} (see also the nice informal explanation in \cite{BertoinToA}). This ensures that the branching property holds for the subtrees rooted at $\ell$, i.e. that conditionally on $\ell$, the subtrees rooted at the vertices of $\ell$ are i.i.d. with the same law as $\T$. This clearly implies the statement about the exploration process.
\end{proof}

This result obviously encompasses the two known cases mentioned. The rule in the breadth-first search case is ``pick the leftmost vertex at the lowest generation in $\ell$'', and ``pick the smallest vertex in the lexicographical order in $\ell$'' in the depth-first search case. We may make up a lot of valid rules, and this construction will be mostly useful when we study the second model, since the rule then is quite involved.

\subsection{Death-first search algorithm} \label{sec:deathfirst}

Let us now introduce a particular labeling of a continuous tree by what we shall call ``death-first search''. Consider a continuous tree $\cT$, such that two events (birth or death) do not occur at the same time. Our rule is, for a line $\ell$, to pick the individual in $\ell$ who dies first. Since $\cT_{\ell}$ keeps track of the life-lengths of the individuals in $\ell$, this is clearly a Markovian rule. See Figure \ref{fig:death_first} for an example, and Figure \ref{fig:death_first_expl_proc} for the corresponding exploration process $(S_n)$.

\begin{figure}[htb]
\centering
\includegraphics[width=0.5 \columnwidth]{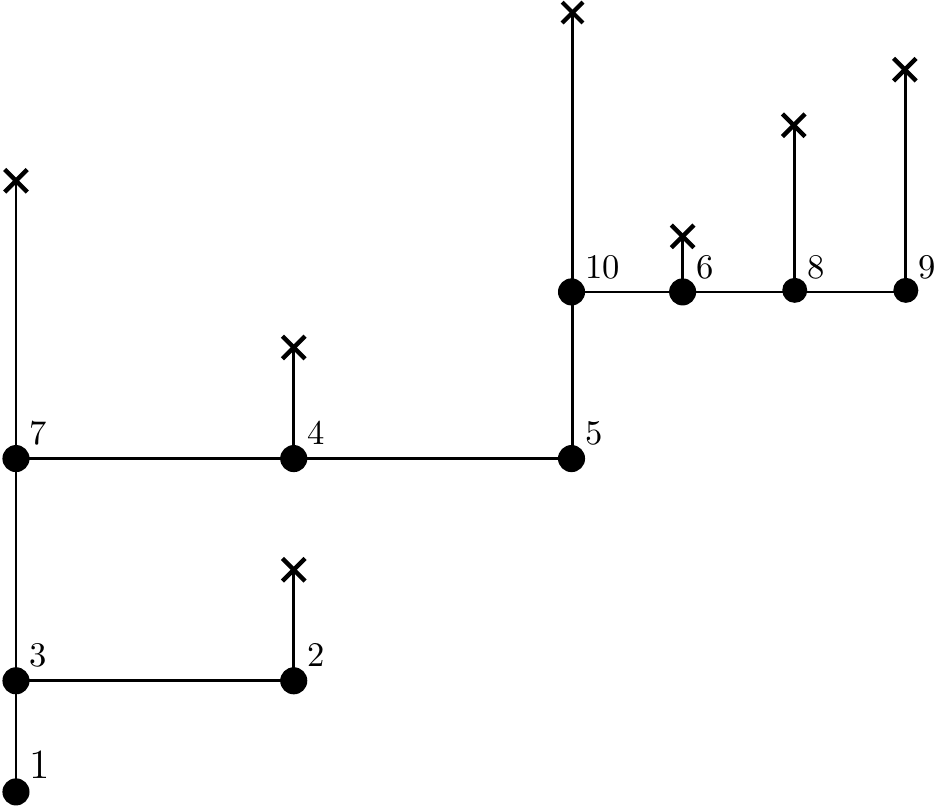}
\caption{Labeling of a tree by death-first search.}
\label{fig:death_first}
\end{figure}

\begin{figure}[htb]
\centering
\includegraphics[width=0.5 \columnwidth]{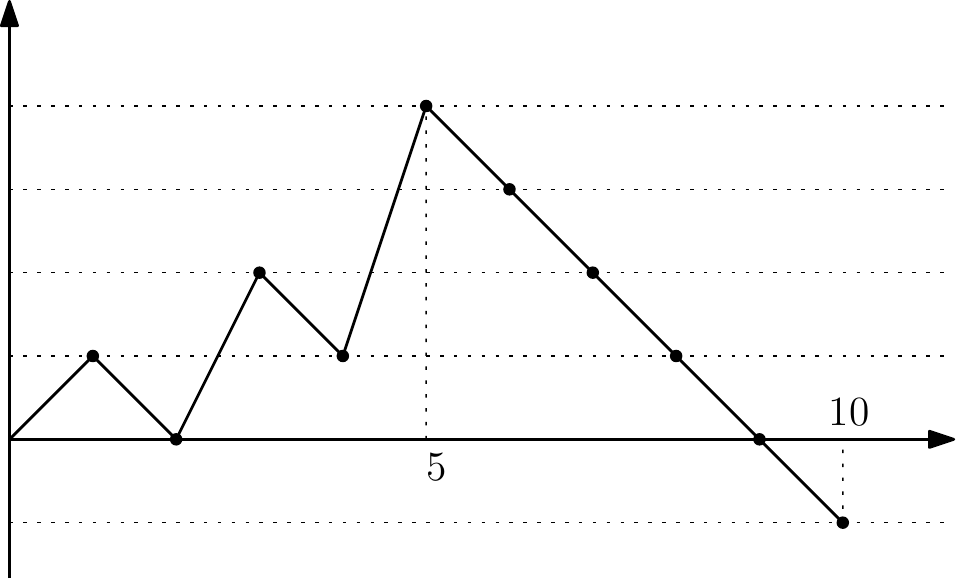}
\caption{Exploration process corresponding to the labeling of the tree in Figure \ref{fig:death_first}.}
\label{fig:death_first_expl_proc}
\end{figure}

Now, let $0 = \tau_0 < \tau_1 < \dots < \tau_k < \tau_{k+1} = \pinf$ be the times of the successive events of birth or death. Then $1 = 1+S_0$ is the number of individuals living on $[\tau_0,\tau_1)$. Then the number of children of 1 is $1 + S_1$, so the number of people alive on $[\tau_1,\tau_2)$ is $1 + S_1$. The number of children of 2 is $1 + (S_2 - S_1)$, so the number of individuals alive on $[\tau_2,\tau_3)$ is
\[
\underbrace{1 + S_1}_{\text{individuals alive at $\tau_2^-$}} + \overbrace{- 1}^{\text{death of 2}} + \underbrace{1 + S_2 - S_1}_{\text{number of children of 2}} = 1 + S_2.
\]
By a clear induction, one readily sees that $1+S_i$ is the number of individuals alive on $[\tau_i,\tau_{i+1})$, for $i \in \{ 0, \dots, k \}$. We will use a variation of this algorithm and this observation when studying our second model.

\section{First model: fossil resources} \label{sec:firstmodel}

Let us recall informally our first model, which involves fossil resources, i.e. non-regrowing resources. Each island $\cI$ contains a number $r \in \N$ of resources, which are consumed by the first $r$ individuals living on $\cI$. Every new individual born on $\cI$ after these $r$ first will migrate, each to a new virgin island with $r$ resources, to found its own colony.

\subsection{Tree of isles} \label{sec:treeofisles1}

Let us start by defining the tree of isles $\cA(r)$ from a discrete tree $\cT$. According to Section \ref{sec:treeofisles}, to this end, all we need to do is to explain how we choose its migrant children, which should be clear from the informal description of the model. Since several children may be born at the same time, the only issue is to choose which children migrate after the resources on the island have been exhausted, and we will pick arbitrarily the right-most children.

To make this precise, if $s(\cT) \leq r$, then $\cT$ has no migrant children. Else, label $\cT$ by breadth-first search. Then the migrant children are the individuals with label in $\{r+1, r+2, \dots \}$, which are descendants of individuals with label in $\{1, \dots, r \}$. See Figures \ref{fig:disc_tree} and \ref{fig:tree_of_isles_1} for examples.

\begin{figure}[htb]
\centering
\includegraphics[width=0.75 \columnwidth]{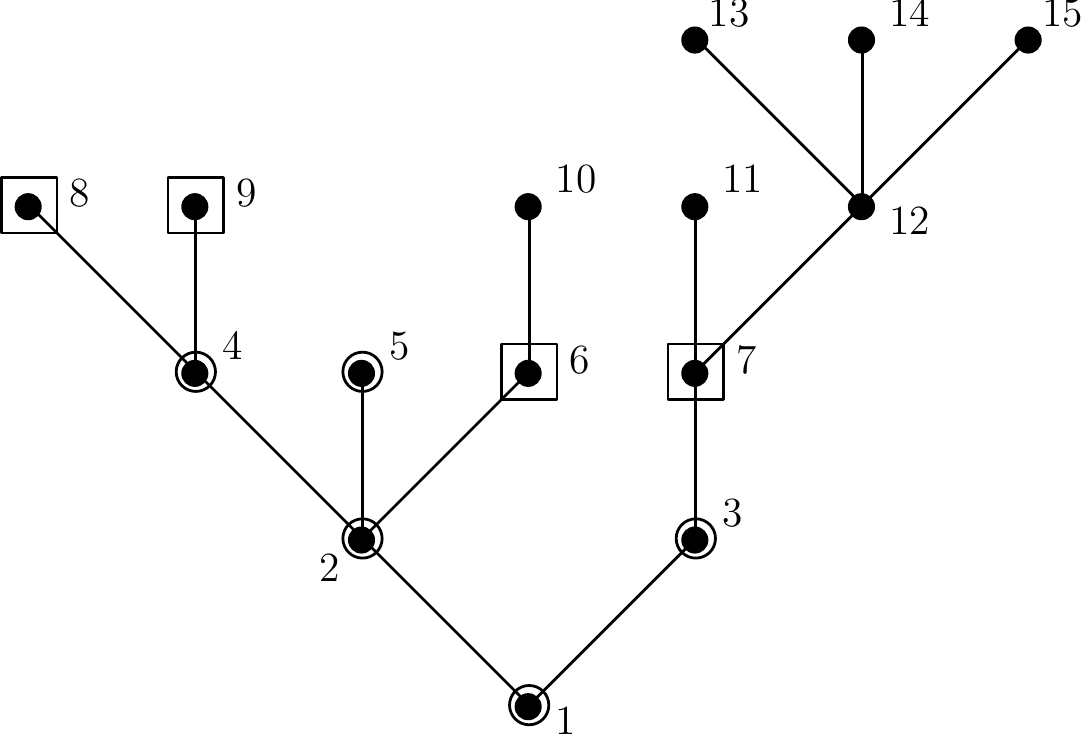}
\caption{A tree labeled $\cT$ by breadth-first search. Its migrants for $r=5$ are shown in a square, the individuals in a circle remaining on the island. Here $P_r(\cT)=5$ and $C_r(\cT) = 4$.}
\label{fig:disc_tree}
\end{figure}

\begin{figure}[htb]
\centering
\includegraphics[width=0.5 \columnwidth]{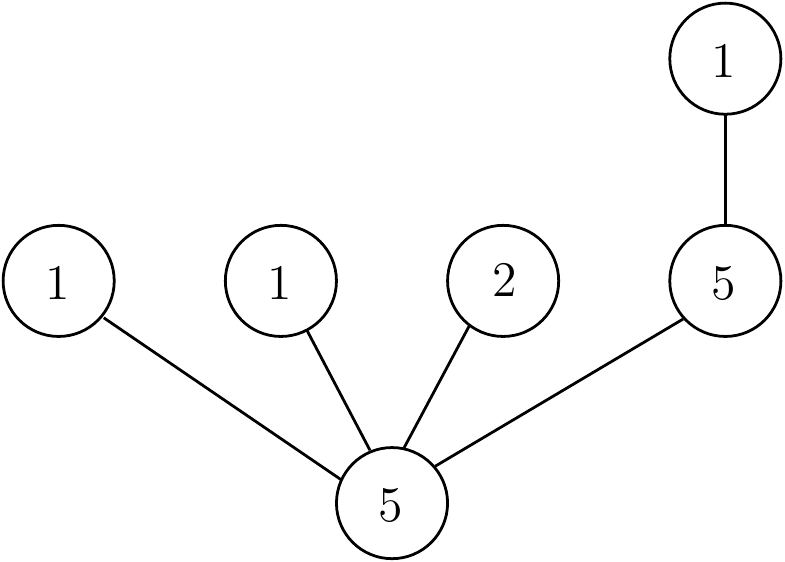}
\caption{The tree of isles $\cA(r)$ obtained from the tree $\cT$ of Figure \ref{fig:disc_tree}, for $r=5$.}
\label{fig:tree_of_isles_1}
\end{figure}

Let us go on with our plan: we wish to read the population and number of colonies of the root of $\cA(r)$ on the exploration process of $\cT$. To this end, we shall naturally consider $(S_n)$ the exploration process associated to the labeling by breadth-first search, see Figure \ref{fig:disc_expl_proc} for an example. We let
\[
\vsinf = \inf \{t \geq 0, \, S_i = - 1 \}, \quad \inf \emp := \pinf,
\]
the hitting time of $-1$ by $(S_n)$. We now claim the following.

\begin{lemma} \label{lem:popcolRW1}
The following equalities hold:
\[
P_r(\cT) = \vsinf \wedge r, \quad C_r(\cT) = 1 + S_{\vsinf \wedge r}.
\]
\end{lemma}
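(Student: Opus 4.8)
The plan is to analyze the breadth-first search exploration process $(S_n)$ directly in terms of the combinatorics of the tree $\cT$, using the well-known fact that $1 + S_i$ counts the individuals in the "active queue" at step $i$ of the breadth-first exploration, i.e.\ the number of vertices that have been enqueued (as children of already-visited vertices) but not yet visited. More precisely, I would first record the standard identity: after visiting vertices $1, \dots, i$ in breadth-first order, the set of vertices that are children of some $j \le i$ but are not themselves in $\{1,\dots,i\}$ has cardinality exactly $1 + S_i = 1 + \sum_{j=1}^i (k_j - 1)$, valid for $0 \le i \le s(\cT)$ (with the convention $1+S_0=1$ accounting for the root itself being "pending" at step $0$). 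This is proved by the same one-line induction already invoked in the proof of Lemma \ref{lem:explproc}: visiting vertex $i+1$ removes it from the pending set and adds its $k_{i+1}$ children, changing the count by $k_{i+1}-1$.

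Next I would split into the two cases. If $s(\cT) \le r$, then by definition $\cT$ has no migrant children, so $C_r(\cT) = 0$ and $P_r(\cT) = s(\cT)$. On the other hand, by the first bullet of Lemma \ref{lem:explproc}, $\vsinf = s(\cT) \le r$, so $\vsinf \wedge r = s(\cT) = P_r(\cT)$, and $1 + S_{\vsinf \wedge r} = 1 + S_{s(\cT)} = 1 + (-1) = 0 = C_r(\cT)$, using that $S_{s(\cT)} = -1$ (the pending set is empty once the whole tree is explored). This disposes of the easy case.

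Now suppose $s(\cT) > r$, so that $\vsinf = s(\cT) > r$ and $\vsinf \wedge r = r$. Here the migrant children are precisely the vertices with breadth-first label $\ge r+1$ that are children of a vertex with label $\le r$. But the identity from the first paragraph says exactly that the number of such vertices — the pending set after step $r$ — is $1 + S_r = 1 + S_{\vsinf \wedge r}$; hence $C_r(\cT) = 1 + S_{\vsinf \wedge r}$. (One should note $S_r \ge 0$ here, i.e.\ the queue is nonempty at step $r$, which holds since $r < s(\cT) = \vsinf$ is strictly before absorption.) For the population: the pruned tree $\cT_{\mathrm{pruned}}$ consists of vertices $1, \dots, r$ together with the migrant children removed; its size is $r$ because the migrant children are exactly the vertices labeled $> r$ that get cut, and the breadth-first labeling visits $1, \dots, r$ in order with none of them being a migrant (a migrant has label $> r$ by the definition above). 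Thus $P_r(\cT) = r = \vsinf \wedge r$. Combining the two cases gives both claimed equalities.

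**Main obstacle.** The only genuinely delicate point is the bookkeeping in the case $s(\cT) > r$: one must check carefully that the pending set after step $r$ coincides, as a set of vertices, with the set of migrant children, and that no vertex among $1,\dots,r$ is itself a migrant. This requires arguing that in breadth-first order every child of a vertex labeled $\le r$ either has label $\le r$ (hence stays on the island) or has label $> r$ (hence migrates), and that conversely every vertex labeled $> r$ has its parent labeled $\le r$ if and only if it is in the pending set at step $r$ — which is immediate from the breadth-first property that parents are always visited before children and children of earlier-visited vertices get smaller labels. Once this set identification is nailed down, the rest is the elementary queue-size induction and the identification $\vsinf = s(\cT)$ from Lemma \ref{lem:explproc}.
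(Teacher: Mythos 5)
Your proof is correct and follows essentially the same route as the paper: both use the identification $\vsinf = s(\cT)$ from Lemma \ref{lem:explproc}, split into the cases $s(\cT) \le r$ and $s(\cT) > r$, and rely on the same inductive queue-size identity that $1+S_i$ counts the children of vertices labeled $1,\dots,i$ whose labels exceed $i$. The extra bookkeeping you spell out in the case $s(\cT)>r$ is exactly what the paper dispatches as ``by an easy induction'' together with the definition of the migrant children.
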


\begin{proof}
Recall from Lemma \ref{lem:explproc} that $\vsinf = s(\cT)$. Hence, when there are no migrants, i.e. $s(\cT) = \vsinf \leq r$, then $P_r(\cT) = s(\cT) = \vsinf = \vsinf \wedge r$ and $C_r(\cT) = 0 = 1 + S_{\vsinf} = 1 + S_{\vsinf \wedge r}$.

When there are migrants, i.e. $s(\cT) = \vsinf > r$, then by definition $P_r(\cT) = r = \vsinf \wedge r$. Now, by an easy induction as in Section \ref{sec:deathfirst}, it is easy to check that for each $i \in \{1, \dots, s(\cT)\}$, $1 + S_i$ is precisely the number of children of the individuals with label in $\{ 1,\dots,i \}$ whose label is in $\{ i+1, i+2, \dots \}$. Hence, by definition, $C_r(\cT) = 1 + S_r = 1 + S_{\vsinf \wedge r}$.
\end{proof}

\begin{figure}[htb]
\centering
\includegraphics[width=0.75 \columnwidth]{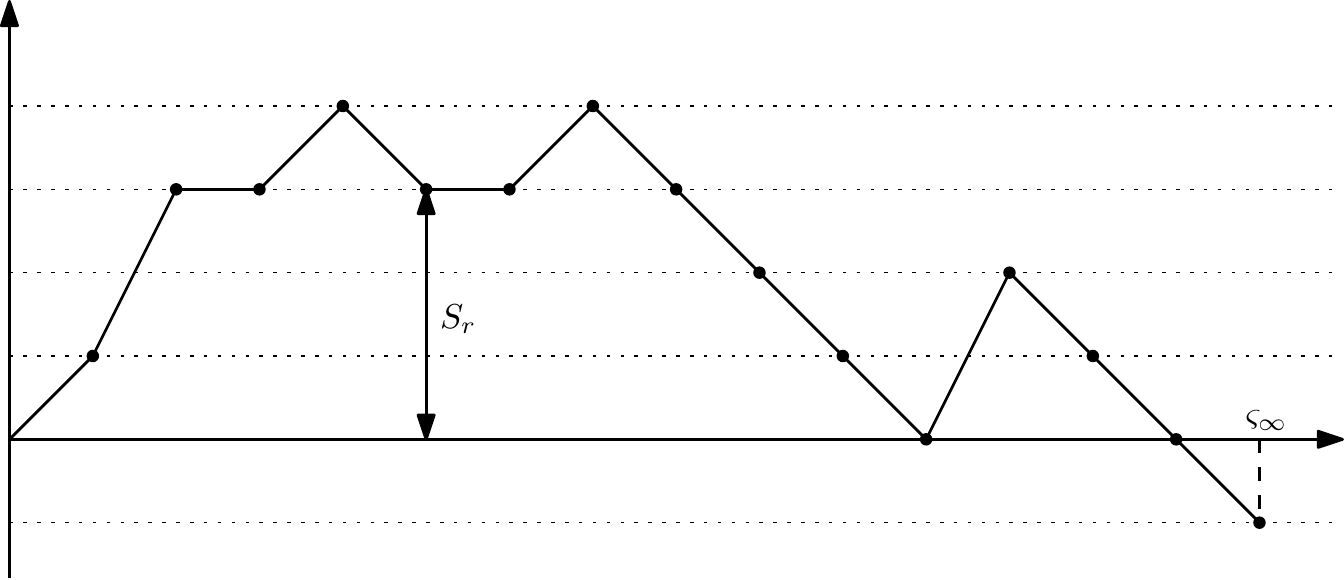}
\caption{The exploration process obtained from the tree $\cT$ of Figure \ref{fig:disc_tree}. Check that, for $r = 5$, $\vsinf \wedge r = 5 = P_r(\cT)$ and $1 + S_r = 4 = C_r(\cT)$.}
\label{fig:disc_expl_proc}
\end{figure}

Let us use the notation of Section \ref{sec:empmeas}, so $\T$ is a Galton-Watson tree with reproduction law $\rho$ and $\A(r)$ its tree of isles when we consider $r$ resources. Instead of considering the walk $(S_n)$ as being constructed from $\T$, we rather take $(S_n)$ to be an actual random walk with step distribution $\trho$, defined on all of $\Z^+$. We thus just have to replace the equalities in Lemma \ref{lem:popcolRW1} by equalities in law. Our objective now is to prove that $\A(r)$ is a multitype Galton-Watson tree. Taking this for granted, the only information we need to characterize it is the law of the type (i.e. population in our terminology) and number of children (or colonies) of the root, i.e. the law of the couple $(P_r(\T),C_r(\T))$. We let $\pi_r$ the law of $P_r(\T)$, and $\g_r$ the law of $C_r(\T)$ knowing that $P_r(\T) = r$. By definition, $C_r(\T) = 0$ when $P_r(\T) < r$, so the law of $(P_r(\T),C_r(\T))$ is indeed specified by $\pi_r$ and $\g_r$.

\begin{lemma} \label{lem:treeGW1}
The tree of isles $\A(r)$ is a $r$-type Galton-Watson tree, described as follows.
\begin{itemize}
	\item The population of the root is chosen according to $\pi_r$;
	\item the number of colonies of the islands of population $r$ has law $\g_r$, and each colony chooses independently its population according to $\pi_r$;
	\item the islands of population $1,\dots,r-1$ do not have any colonies.
\end{itemize}   
\end{lemma}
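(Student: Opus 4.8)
The plan is to prove this by establishing that the tree of isles inherits a branching property from the Galton-Watson tree $\T$, and then to identify the offspring law at each type. The conceptual heart of the argument is Lemma \ref{lem:explproc}: the migrant children of $\cT$ (for $r$ resources) are exactly those labels in $\{r+1, r+2, \dots\}$ that are children of labels in $\{1, \dots, r\}$, and these are determined by the breadth-first exploration run up to time $\vsinf \wedge r$. I would first observe that the subtrees hanging from the migrant children are precisely the subtrees rooted at the line $\ell$ obtained after $\vsinf \wedge r$ steps of the breadth-first labeling algorithm (restricted to those vertices whose label would exceed $r$), so by the branching property in Lemma \ref{lem:explproc} these subtrees are, conditionally on their number, i.i.d.\ copies of $\T$.

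Next I would set up the recursive structure. Recall the tree of isles is built from the stack-based algorithm of Section \ref{sec:treeofisles}: the root is assigned population $P_r(\cT)$ and $C_r(\cT)$ colonies, and then the same construction is applied to each $\Tmigr^i$. The key point is that $(P_r, C_r)$ of any tree depends only on that tree, not on the rest of the structure, and the $\Tmigr^i$ are measurable functions of $\cT$ which, when $\cT = \T$, become i.i.d.\ copies of $\T$ conditionally on $C_r(\T)$. I would make this precise by induction on the generation of the tree of isles: conditionally on the populations and colony-counts at generations $\le n$, the collection of subtrees giving rise to generation-$(n+1)$ islands is an i.i.d.\ family distributed as $\T$. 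This is exactly the statement that $\A(r)$ is a multitype Galton-Watson tree, with the type (population) of a vertex being $P_r$ of its associated subtree and the offspring count being $C_r$ of that subtree, each offspring independently getting a fresh copy of $\T$.

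It then remains to read off the offspring distribution from Lemma \ref{lem:popcolRW1}. By that lemma, $P_r(\T) = \vsinf \wedge r$ and $C_r(\T) = 1 + S_{\vsinf \wedge r}$. If $\vsinf \le r - 1$ (equivalently $P_r(\T) \le r - 1$) then the tree is already extinct on the home island, $S_{\vsinf} = -1$, so $C_r(\T) = 0$: islands of population $1, \dots, r-1$ have no colonies. If $\vsinf \ge r$, then $P_r(\T) = r$ and $C_r(\T) = 1 + S_r$, whose conditional law given $P_r(\T) = r$ is by definition $\g_r$; moreover each colony, being founded by a migrant child whose subtree is an independent copy of $\T$, has population chosen afresh according to $\pi_r$. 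The marginal law of the root population is $\pi_r$ by definition. This gives precisely the three bullet points, and since a multitype Galton-Watson tree is characterized by the offspring law of each type, the proof is complete.

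The main obstacle I expect is the rigorous bookkeeping in the inductive step: one must check that the stopping-line / branching-property machinery of Lemma \ref{lem:explproc} really does decouple the home-island exploration (which determines $P_r$ and $C_r$ of the root) from the migrant subtrees, and then that this decoupling propagates through the recursive construction of $\cA$ so that all islands at all generations get genuinely independent copies of $\T$. Everything else — reading $(P_r, C_r)$ off the walk, identifying $\pi_r$ and $\g_r$, the vanishing of colonies for small populations — is immediate from Lemmas \ref{lem:popcolRW1} and \ref{lem:explproc}.
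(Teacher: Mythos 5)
Your proposal is correct and follows essentially the same route as the paper: identify the migrant children as a stopping line so that, by Lemma \ref{lem:explproc}, the subtrees $\Tmigr^1,\dots,\Tmigr^{C_r(\T)}$ are, conditionally on their number, i.i.d.\ copies of $\T$, read the root's population and colony laws ($\pi_r$, $\g_r$, and zero colonies when $P_r(\T)<r$) from Lemma \ref{lem:popcolRW1}, and conclude by induction along the recursive construction of $\A(r)$. The extra bookkeeping you flag is exactly what the paper compresses into ``a simple induction then yields the result.''
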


\begin{proof}
Let us take a look at the algorithm defining the tree of isles $\A(r)$. The root has, by definition, population given by $\pi_r$. If $\A_{\emp}(r) < r$, then $\T$ has no migrant children and $\A_u(r) = 0$ for $u \in \cU^*$. Else, $\A_{\emp}(r) = r$, and $\T$ has a number of migrant children given by $\g_r$. Conditioned on these migrant children, the second statement of Lemma \ref{lem:explproc} tells that the subtrees $\T_{\mathrm{migr}}^1,\dots,\T_{\mathrm{migr}}^{C_r(\T)}$ are independent with the same law as $\T$. A simple induction then yields the result.
\end{proof}

A straightforward corollary of this result concerns $(Z_{i,p}(r),\, p = 1, \dots, r)_{i \geq 0}$, as defined in Section \ref{sec:empmeas}.

\begin{lemma} \label{lem:procGW1}
The process $(Z_{i,p}(r),\, p = 1, \dots, r)_{i \geq 0}$ is a $r$-type Galton-Watson process, described as follows. 
\begin{itemize}
	\item The $r$-tuple $(Z_{0,1}(r),\dots,Z_{0,r}(r))$ has the same law as $(\unn{P_r(\T)=1},\dots,\unn{P_r(\T)=r})$;
	\item the number of children of the individuals of type $r$ has law $\g_r$, and each child chooses independently its type according to $\pi_r$;
	\item the individuals of type $1,\dots,r-1$ do not have any children.
\end{itemize} 
\end{lemma}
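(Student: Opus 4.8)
The plan is to deduce this directly from Lemma \ref{lem:treeGW1}, by reading off the generation-wise type counts of a multitype Galton–Watson tree. Recall that $Z_{i,p}(r)$ is by definition the number of vertices $u \in \cU$ with $|u| = i$ and $\A_u(r) = p$, so the vector $(Z_{i,1}(r), \dots, Z_{i,r}(r))$ records exactly how the $i$-th generation of the tree of isles $\A(r)$ is partitioned among the $r$ possible populations.

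First I would invoke the standard fact that for any multitype Galton–Watson tree the sequence of generation-wise type-count vectors is a multitype Galton–Watson process, whose offspring laws are read off from those of the tree: sorting the children of the $i$-th generation by the type of their parent, the $(i+1)$-th generation is the independent superposition, over the vertices of the $i$-th generation, of offspring sets whose joint type-law depends only on the parent's type. This is just the branching property of the tree rephrased in terms of counts. The one point deserving a word of care is that one conditions on the \emph{whole} $i$-th generation (positions of vertices, not merely their type-counts); this is harmless, because by Lemma \ref{lem:treeGW1} (and Lemma \ref{lem:explproc} behind it) the subtrees of $\A(r)$ rooted at the generation-$i$ vertices are independent and each has a law depending only on its root's type, so summing over all configurations with given type-counts yields a conditional law of $(Z_{i+1,p}(r))_p$ depending only on $(Z_{i,p}(r))_p$. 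This is the only step with any content, and it is routine; there is no genuine obstacle.

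Then I would plug in the explicit description of $\A(r)$ from Lemma \ref{lem:treeGW1}. The root has population distributed according to $\pi_r$, which is by definition the law of $P_r(\T)$, whence $(Z_{0,1}(r),\dots,Z_{0,r}(r))$ has the law of $(\unn{P_r(\T)=1},\dots,\unn{P_r(\T)=r})$. An island of population $p \in \{1,\dots,r-1\}$ has no colonies, so a type-$p$ individual with $p < r$ has no children; an island of population $r$ has a $\g_r$-distributed number of colonies, each choosing its population independently according to $\pi_r$, so a type-$r$ individual has a $\g_r$-distributed number of children, each with an independent $\pi_r$-distributed type. This is exactly the asserted description, so the lemma is a bookkeeping corollary of Lemma \ref{lem:treeGW1}.
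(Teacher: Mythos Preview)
Your proposal is correct and matches the paper's approach: the paper states this lemma as a ``straightforward corollary'' of Lemma~\ref{lem:treeGW1} without further proof, and your argument---that generation-wise type counts of a multitype Galton--Watson tree form a multitype Galton--Watson process with offspring laws read directly from the tree---is precisely the intended justification.
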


\subsection{Rescaling}

Let us keep on with our program. We now start from $N$ independent islands and $r_N$ resources, both meant to tend to infinity. We wish to rescale $r_N$ so that each island has a probability of order $1/N$ to have colonies. From Lemma \ref{lem:popcolRW1},
\begin{equation} \label{eq:tailvsinf}
\P(C_{r_N}(\T) > 0) = \P(\vsinf > r_N) \sim \sqrt{\frac{2}{\pi \s^2}} \frac{1}{\sqrt{r_N}}.
\end{equation}
The last equivalent stems from well-known facts about hitting times for random walks, see e.g. \cite{Spitzer}, p.382. We shall thus assume that, for some $c > 0$,
\begin{equation} \label{eq:hyprN1}
\lim_{N \to \pinf} \frac{r_N}{N^2} = c > 0.
\end{equation}

Let us introduce some notation. We denote for simplicity
\[
\l = \sqrt{\frac{2}{\pi \s^2 c}}.
\]
Let 
\[
P_N  = P_{r_N}(\T), \quad C_N = C_{r_N}(\T), \quad p_N = \P(P_N = r_N),
\]
their conditioned laws
\[
\piN = \cL(P_N | P_N < r_N), \quad \gN = \cL( C_N | P_N = r_N ),
\]
and their rescaled versions
\[
\tpiN = \cL \left ( \frac{P_N}{N^2} \middle | P_N < r_N \right ), \quad \gN = \cL \left ( \frac{C_N}{N} \middle | P_N = r_N  \right ).
\]
Let us first study the limit, in different senses, of the two latter. We define here
\[
\mu(\ddx) = \frac{1}{2} \frac{1}{x^{3/2}} \dx, \quad \mu^c(\ddx) = \unn{x \in (0,c)} \mu(\ddx)
\]
and
\[
\th = \cL(\sqrt{c} \s W_1^+), \quad\P(W_1^+ \in \ddx) = x e^{-x^2/2} \unn{x > 0} \dx.
\]
The notation $W_1^+$ just stems from the usual notation $(W^+_t)_{t \in [0,1]}$ for the standard Brownian meander, and $W_1^+$ is its tip, whose law is the Rayleigh law $x e^{-x^2/2} \unn{x > 0} \dx$.

\begin{lemma} \label{lem:convmeas}
As $N \to \pinf$,
\[
p_N \sim \frac{\l}{N}
\]
and moreover, the convergences
\[
N \tpiN \Rightarrow \l \mu^c, \quad \tgN \convlaw \th
\]
hold, respectively in $\cM^+$ and in law.
\end{lemma}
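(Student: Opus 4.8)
The whole statement is about asymptotics of functionals of the left-continuous random walk $(S_n)$ with step distribution $\trho$, using Lemma \ref{lem:popcolRW1}: $P_N = \vsinf \wedge r_N$ and $C_N = 1 + S_{\vsinf \wedge r_N}$. I would treat the three assertions in order, since they build on each other.

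\textbf{Step 1: the tail estimate for $p_N$.} By Lemma \ref{lem:popcolRW1}, $p_N = \P(P_N = r_N) = \P(\vsinf > r_N)$, and \eqref{eq:tailvsinf} already gives $\P(\vsinf > r_N) \sim \sqrt{2/(\pi\s^2)} \cdot r_N^{-1/2}$. Plugging in the hypothesis \eqref{eq:hyprN1}, $r_N \sim c N^2$, yields $p_N \sim \sqrt{2/(\pi\s^2)} \cdot (cN^2)^{-1/2} = \sqrt{2/(\pi\s^2 c)} \cdot N^{-1} = \l/N$. This step is essentially a one-line substitution once \eqref{eq:tailvsinf} is granted.

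\textbf{Step 2: convergence $N\tpiN \Rightarrow \l\mu^c$ in $\cM^+$.} Here $\tpiN$ is the law of $P_N/N^2 = (\vsinf\wedge r_N)/N^2$ conditioned on $P_N < r_N$, i.e.\ conditioned on $\vsinf \le r_N$. So for $f \in C_K$ (supported in $(0,\pinf)$, hence in $(0,c)$ for $N$ large since $r_N/N^2 \to c$),
\[
\la N\tpiN, f\ra = N \, \E\!\left[ f\!\left(\tfrac{\vsinf}{N^2}\right) \,\middle|\, \vsinf \le r_N \right]
= \frac{N}{\P(\vsinf \le r_N)} \, \E\!\left[ f\!\left(\tfrac{\vsinf}{N^2}\right) \unn{\vsinf \le r_N} \right].
\]
Since $\P(\vsinf \le r_N) \to 1$, it suffices to show $N\,\E[f(\vsinf/N^2)\unn{\vsinf\le r_N}] \to \l \int_0^c f\,\ddx/(2x^{3/2})$. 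For this I would use the local limit theorem / known asymptotics for the hitting time of $-1$ by a mean-zero, finite-variance random walk: $\P(\vsinf = m) \sim \frac{1}{\sqrt{2\pi\s^2}}\, m^{-3/2}$ as $m\to\infty$ (the renewal-type estimate behind \eqref{eq:tailvsinf}, cf.\ \cite{Spitzer}), equivalently the scaling limit $\vsinf/N^2 \to \s^{-2}$ times a stable-$(1/2)$ variable. Then $N\sum_{m\le r_N} f(m/N^2)\P(\vsinf=m)$ is a Riemann sum: writing $m = N^2 x$, $\P(\vsinf=m)\sim \frac{1}{\sqrt{2\pi\s^2}} N^{-3} x^{-3/2}$, so the sum is $\sim N\cdot N^2 \cdot \frac{1}{\sqrt{2\pi\s^2}}N^{-3}\int_0^{r_N/N^2} f(x) x^{-3/2}\,\ddx \to \frac{1}{\sqrt{2\pi\s^2}}\int_0^c f(x)x^{-3/2}\ddx$. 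One checks $\frac{1}{\sqrt{2\pi\s^2}} = \frac{\l}{2}\sqrt{c} \cdot \frac{1}{\sqrt{c}}\cdot\frac{2}{2}$... more directly, $\frac{1}{\sqrt{2\pi\s^2}} = \frac12 \sqrt{\frac{2}{\pi\s^2}} = \frac{\l\sqrt c}{2}\cdot\frac{1}{\sqrt c}\cdot\sqrt c$; cleanly, $\l = \sqrt{2/(\pi\s^2 c)}$ so $\frac{\l}{2} = \frac{1}{\sqrt{2\pi\s^2 c}}$, and since $f$ vanishes near $0$ the factor $\sqrt c$ discrepancy is absorbed — I would just verify $\frac{1}{\sqrt{2\pi\s^2}} \int_0^c f(x)x^{-3/2}\ddx = \l \int_0^c f(x)\,\mu(\ddx)$ reduces to $\frac{1}{\sqrt{2\pi\s^2}} = \frac{\l}{2}$, which is false, so the correct bookkeeping gives the $\l\mu^c$ normalization via $p_N\sim\l/N$ rather than $\P(\vsinf\le r_N)\to 1$; the clean way is to write $N\tpiN = \frac{N}{\P(\vsinf\le r_N)}(\text{measure})$ and note all mass of $f$ is away from the conditioning. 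The bookkeeping of constants is the one fiddly point; the structure is a Riemann-sum argument plus the stable-$(1/2)$ hitting-time asymptotics.

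\textbf{Step 3: convergence $\tgN \convlaw \th$.} By Lemma \ref{lem:popcolRW1}, conditionally on $P_N = r_N$ (i.e.\ on $\vsinf > r_N$), $C_N/N = (1 + S_{r_N})/N$. So I must show: conditionally on $\{\vsinf > r_N\}$, i.e.\ conditionally on the walk staying $\ge 0$ on $\{1,\dots,r_N\}$, the rescaled value $S_{r_N}/N$ converges in law to $\sqrt c\,\s W_1^+$, the (scaled) tip of the Brownian meander. This is exactly Donsker's theorem conditioned on non-negativity: a mean-zero finite-variance random walk conditioned to stay nonnegative up to time $n$ converges, after diffusive rescaling by $\sqrt n \s$, to the Brownian meander (Iglehart, Bolthausen); in particular the endpoint $S_n/(\sqrt n \s)$, conditioned on $S_1,\dots,S_n\ge 0$, converges to $W_1^+$, which has the Rayleigh law. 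With $n = r_N \sim cN^2$, $\sqrt{r_N} \sim \sqrt c\, N$, so $S_{r_N}/N = \sqrt{r_N}\,\s/N \cdot S_{r_N}/(\sqrt{r_N}\,\s) \Rightarrow \sqrt c\,\s\, W_1^+$, which is the law $\th$. Adding the $+1$ and dividing by $N$ is harmless. I should be slightly careful that the conditioning event in the meander statement ($S_1,\dots,S_n \ge 0$, allowing $S_i = 0$) matches $\{\vsinf > r_N\} = \{S_i \ge 0 \text{ for } i \le r_N\}$ given that $(S_n)$ is left-continuous and absorbed at $-1$ — these coincide, since $\vsinf$ is the first hitting time of $-1$.

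\textbf{Main obstacle.} The genuinely substantive input is the conditioned invariance principle in Step 3 (convergence of the random walk conditioned to stay nonnegative to the Brownian meander, including convergence of the endpoint marginal), and, to a lesser extent, the precise local asymptotics $\P(\vsinf = m) \sim (2\pi\s^2)^{-1/2} m^{-3/2}$ needed to turn Step 2 into a Riemann sum; both are classical (\cite{Spitzer}, Iglehart, Bolthausen, Kaigh) but must be quoted carefully. Everything else — the reductions via Lemma \ref{lem:popcolRW1}, the conditioning rearrangements, and tracking the constant $\l$ through the rescaling $r_N \sim cN^2$ — is routine, modulo attention to the normalizing constants.
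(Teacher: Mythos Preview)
Your plan is correct and, for Steps 1 and 3, identical to the paper's: Step 1 is the one-line substitution of \eqref{eq:hyprN1} into \eqref{eq:tailvsinf}, and Step 3 is precisely the conditioned invariance principle (Bolthausen \cite{Bolt}, Iglehart \cite{Igle}) applied to $S_{r_N}/(\s\sqrt{r_N})$ on the event $\{\vsinf \ge r_N\}$, giving the Rayleigh-distributed meander endpoint.

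The only methodological difference is Step 2. The paper does not use the pointwise asymptotic $\P(\vsinf = m) \sim (2\pi\s^2)^{-1/2} m^{-3/2}$ and a Riemann sum; it works directly with the tail, observing that \eqref{eq:tailvsinf} yields $N\,\P(\vsinf/N^2 > a) \to \text{const}\cdot a^{-1/2}$ for every $a>0$, which is already vague convergence of $N\,\P(\vsinf/N^2 \in \cdot)$ to the measure with density proportional to $x^{-3/2}$; one then restricts the integral to $(0,(r_N-1)/N^2)$ and checks that the sliver between $c$ and $(r_N-1)/N^2$ contributes nothing in the limit. This route is slightly cleaner than yours because it uses only the already-granted tail estimate \eqref{eq:tailvsinf}, whereas your Riemann-sum argument requires the \emph{local} asymptotic for $\P(\vsinf = m)$ (Kemperman's formula $\P(\vsinf = m)=m^{-1}\P(S_m=-1)$ plus a local CLT, hence an implicit aperiodicity hypothesis the paper never imposes). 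Your route is perfectly valid under that extra assumption; the paper's tail-based argument simply sidesteps it. Regarding your constant confusion: your local computation and the tail computation must produce the same number, so the discrepancy you flagged between $(2\pi\s^2)^{-1/2}$ and $\l/2 = (2\pi\s^2 c)^{-1/2}$ is a genuine bookkeeping point to straighten out carefully (it is a factor $\sqrt{c}$), not a defect in your strategy.
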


\begin{proof}
The first statement is just \eqref{eq:tailvsinf} written with a non-strict inequality. For the first convergence, note that more generally, \eqref{eq:tailvsinf} and \eqref{eq:hyprN1} ensure that, for every $a > 0$,
\[
N \P ( \vsinf > a N^2 ) \to \frac{\l}{\sqrt{a}} = \frac{\l}{2} \int_a^{\pinf} \frac{1}{x^{3/2}} \dx,
\]
so one readily sees by standard approximations that
\[
N \P \left ( \frac{\vsinf}{N^2} \in \ddx \right ) \Rightarrow \frac{\l}{2} \frac{1}{x^{3/2}} \dx.
\]
Recall also from Lemma \ref{lem:popcolRW1} that $P_N$ has the same law as $\vsinf \wedge r_N$, and thus, for $f \in C_K$,
\begin{align*}
& \int_0^{\pinf} f(x) N \tpiN(\ddx) \\
& = \P ( \P_N < r_N )^{-1} \int_0^{(r_N - 1)/N^2} f(x) N \P ( P_N / N^2 \in \ddx) \\
& = (1-p_N)^{-1}  \int_0^{(r_N - 1)/N^2} f(x) N \P ( \vsinf / N^2 \in \ddx) \\
& = (1-p_N)^{-1} \left ( \int_0^c f(x) N \P ( \vsinf / N^2 \in \ddx) + \int_c^{(r_N - 1) / N^2} f(x) N \P ( \vsinf / N^2 \in \ddx) \right ) \\
& \to \int_0^c f(x) \mu(\ddx)
\end{align*}
using the computation above and that the second term is easily seen to tend to 0 since $r_N/N^2 \to c$.

For the second convergence, note that Lemma \ref{lem:popcolRW1} implies that $\tgN$ is the law of
\[
\left. \frac{1 + S_{r_N}}{N} \right | \vsinf \geq r_N.
\]
It is well-known (see \cite{Bolt}) that a centered random walk with a second moment, conditioned to stay positive, converges to the Brownian meander $(W^+_t)_{t \in [0,1]}$, and in particular
\[
\left. \frac{S_{r_N}}{\s \sqrt{r_N}} \right | \vsinf \geq r_N \convlaw W^+_1,
\]
whence the result follows after noticing (see e.g. \cite{Igle}) that $W^+_1$ has the Rayleigh law.
\end{proof}

\subsection{Heuristics and result} \label{sec:heuristics}

Before stating the result, we will, with the help of the previous results, discuss some heuristics. Consider the forest
\[
\left ( \A^1(r_N),\dots,\A^N(r_N) \right ) / N^2,
\]
where dividing by $N^2$ means rescaling the population of every island by $1/N^2$. The islands with (rescaled) population $1/N^2, \dots, (r_N - 1) / N^2$ do not have colonies, whereas those with population $r_N / N^2 \approx c$ may\footnote{And probably do: an island of population $r_N/N^2$ does not have colonies only in the (rare) case where the founder of this island has precisely $r_N - 1$ descendants.}. We shall thus call the latter type of islands \emph{fertile}. According to Lemma \ref{lem:convmeas}, the number of colonies of a fertile island is approximately $\f N$, where $\f$ has law $\th$. We will then say that this island has fertility $\f$.

Now, by Lemma \ref{lem:treeGW1}, the populations $t_1,\dots,t_{\f N}$ of these $\f N$ islands are chosen independently. Each has a probability $p_N \sim \l / N$ to be fertile,  and the population of any other island has law $\tpi_N$, and $N \tpi_N \Rightarrow \l \mu^c$. Hence, the measure
\[
\sum_{i = 1}^{\f N} \dl_{t_i}
\]
is approximately a Poisson measure with intensity
\[
\f \l \left ( \mu^c + \dl_c \right ).
\]
By the superposition property of Poisson measures, this can be reformulated by saying that the descendants of a fertile individual with fertility $\f$ are
\begin{itemize}
	\item either fertile, and there is approximately a Poissonian number with parameter $\f \l$ of those;
	\item or not fertile, and those contribute to the empirical measure as approximately a Poisson random measure with intensity $\f \l \mu^c$ .
\end{itemize}
Finally, since the initial number of individuals is $N$, we may link them to a virtual ancestor of fertility $1$.

This invites us to introduce the following measure $\eta$. We first construct a Galton-Watson tree $T$ with fertilities\footnote{This is, once again, just another way to speak of types, but we would rather avoid the confusion.} in $(0,\pinf)$. The number of children of an individual with fertility $\f$ is Poisson with parameter $\f \l$, and each child has a fertility chosen independently according to $\th$. We start from an individual with fertility $1$, and denote $\f_u$ the fertility of $u \in T$.

Obviously, if we forget about the types, $T$ has the same law as a Galton-Watson tree, constructed as follows:
\begin{itemize}
	\item the reproduction law of the ancestor is Poisson with parameter $\l$;
	\item the other individuals have reproduction law $\gTh$, which is a Cox law: it is a mixture of Poisson laws, where the random parameter is chosen as $\l \sqrt{c} \s W_1^+$, where we recall that $\sqrt{c} \s W_1^+$ has law $\th$.
\end{itemize}
The point of this alternative construction is to note that $\l \sqrt{c} \s W_1^+ = \sqrt{2/\pi} W_1^+$, so $\gTh \neq \dl_1$ and has mean $1$, so that $T$ is a.s. finite. For matters concerning the construction of these variables and the measurability of the functions and variables we consider, we refer to \cite[ch. III]{Harris} and \cite[ch. VI]{Atney}.

Finally, we define a measure $\eta$ as follows: consider $(\nu_u(\f), \, \f \geq 0)_{u \in \cU}$ an i.i.d. family, consisting of collections indexed by $\R^+$ of random variables, such that, for each $u \in \cU$ and $\f \geq 0$, $\nu_u(\f)$ is a Poisson measure with intensity $\f \l \mu^c$. This can for instance be constructed using a family of i.i.d. Poisson processes indexed by $\cU$, see Section \ref{sec:threelemmas}. Then we define
\[
\eta = \sum_{u \in T} ( \nu_u(\f_u) + \dl_c) - \dl_c = \sum_{u \in T} \nu_u(\f_u) + (\# T - 1) \dl_c.
\]
Subtracting $\dl_c$ is just to take into account that the ancestor is a virtual one. We may now state our result.

\begin{thm} \label{th:conv1}
Under the assumption \eqref{eq:hyprN1}, the sequence $(\PN_N(r_N))$ converges in distribution in $\cM^+$ to a random measure with the same law as $\eta$. 
\end{thm}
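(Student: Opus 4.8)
The proof follows the three steps of Section~\ref{sec:technique}: tightness, identification of the limiting cumulant via a functional equation, and matching that cumulant with the one of $\eta$. Fix $f\in C_K$ and let $\k_N(f)=-\ln\E\exp(-\la\PN_N(r_N),f\ra)$ be the cumulant of $\PN_N(r_N)$; since $\PN_N(r_N)$ is a sum of $N$ i.i.d.\ copies of $\PN(r_N)$, one has $\k_N(f)=Nh_N(f)$ with $h_N(f)=-\ln\E\exp(-\la\PN(r_N),f\ra)$, so that $e^{-\k_N(f)/N}$ is the Laplace functional of a single copy. Using the branching description of $\A(r_N)$ from Lemma~\ref{lem:treeGW1} --- the root has population $P_N=P_{r_N}(\T)$, with no colony if $P_N<r_N$ and, if $P_N=r_N$, with $C_N=C_{r_N}(\T)$ colonies, each rooting an independent copy of the tree of isles (this is legitimate since $\T$, hence $\A(r_N)$, is a.s.\ finite) --- together with the fact that, conditionally on $P_N=r_N$, the variable $C_N/N$ has law $\tgN$, I would obtain
\begin{equation}\label{eq:plan-cumul}
e^{-\k_N(f)/N}=(1-p_N)\int e^{-f(x)}\,\tpiN(\ddx)+e^{-f(r_N/N^2)}\,p_N\,\widehat{\tgN}(\k_N(f)),\qquad\widehat{\tgN}(t):=\int_{(0,\pinf)}e^{-ty}\,\tgN(\ddy).
\end{equation}

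Dropping the nonnegative last term gives $\k_N(f)/N\le-\ln((1-p_N)\int e^{-f}\,\tpiN)$, which is $O(1/N)$ because $p_N\to0$ and $\int(1-e^{-f})\,N\tpiN\to\l\la\mu^c,1-e^{-f}\ra$ (Lemma~\ref{lem:convmeas}); since also $\k_N(f)\ge0$, the sequence $(\k_N(f))_N$ is bounded. Along any subsequence, extract a further one on which $\k_N(f)\to\bar\k(f)$. Multiplying \eqref{eq:plan-cumul} by $N$ and letting $N\to\pinf$ along it, I would use: $Np_N\to\l$ and $\int(1-e^{-f})\,N\tpiN\to\l\la\mu^c,1-e^{-f}\ra$ (Lemma~\ref{lem:convmeas}); the fact that $\vsinf$ is a.s.\ finite and independent of $N$, so $P_N=\vsinf\wedge r_N$ satisfies $P_N/N^2\to0$ and hence $\int e^{-f}\,\tpiN\to e^{-f(0)}=1$ (recall $f$ vanishes near $0$); $r_N/N^2\to c$; and $\widehat{\tgN}(\k_N(f))\to\widehat\th(\bar\k(f))$, this last point following from $\tgN\convlaw\th$ and $\k_N(f)\to\bar\k(f)$ by a routine $\eps/3$-argument (Portmanteau bounds the tails $\tgN((M,\pinf))$ for large $M$, and on $[0,M]$ one combines $\tgN|_{[0,M]}\convlaw\th|_{[0,M]}$ with the uniform convergence $e^{-y\k_N(f)}\to e^{-y\bar\k(f)}$). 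The outcome is the fixed-point equation
\begin{equation}\label{eq:plan-fixed}
\bar\k(f)=\l\Big(1+\la\mu^c,1-e^{-f}\ra-e^{-f(c)}\,\widehat\th(\bar\k(f))\Big),\qquad\widehat\th(t):=\int_{(0,\pinf)}e^{-ty}\,\th(\ddy).
\end{equation}
Let $F(t)$ be the right-hand side as a function of $t\ge0$. Then $F$ is increasing, bounded, and strictly concave (as $\widehat\th''>0$), with $F(0)\ge0$ and $F'(0)=\l\,\sc\,\s\,e^{-f(c)}\,\E[W_1^+]=e^{-f(c)}\le1$, using $\E[W_1^+]=\sqrt{\pi/2}$ and $\l\,\sc\,\s\sqrt{\pi/2}=1$. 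Hence $g(t):=F(t)-t$ satisfies $g(0)\ge0$, has derivative $\le0$ at $0$ with $g'$ strictly decreasing, and $g(t)\to-\pinf$, so $g$ vanishes exactly once: \eqref{eq:plan-fixed} has a unique solution $\k(f)$. Consequently every subsequential limit of $\k_N(f)$ equals $\k(f)$, i.e.\ $\k_N(f)\to\k(f)$ for all $f\in C_K$.

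It remains to check that $\k(f)$ is the cumulant of $\eta$. For a subtree of $T$ rooted at a vertex of fertility $\f$, write $\Xi_\f$ for its contribution to $\sum_{u\in T}(\nu_u(\f_u)+\dl_c)$ and $G(\f)=\E\exp(-\la\Xi_\f,f\ra)$. By the branching structure of $T$ (a vertex of fertility $\f$ carries $\nu(\f)+\dl_c$, where $\nu(\f)$ is a Poisson random measure of intensity $\f\l\mu^c$, and has a $\mathrm{Poisson}(\f\l)$ number of children with i.i.d.\ fertilities of law $\th$) and the Laplace-functional formula for Poisson random measures,
\[
G(\f)=e^{-\f\l\la\mu^c,1-e^{-f}\ra}\,e^{-f(c)}\,e^{-\f\l(1-\bar G)},\qquad\bar G:=\int_{(0,\pinf)}G(y)\,\th(\ddy),
\]
so $G(\f)=e^{-f(c)}e^{-\f\k^*}$ with $\k^*:=\l(1+\la\mu^c,1-e^{-f}\ra-\bar G)$, whence $\bar G=e^{-f(c)}\widehat\th(\k^*)$; thus $\k^*$ solves \eqref{eq:plan-fixed}, i.e.\ $\k^*=\k(f)$. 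Since $\eta\eqlaw\Xi_1-\dl_c$ (the ancestor has fertility $1$), $\E\exp(-\la\eta,f\ra)=e^{f(c)}G(1)=e^{-\k(f)}$. As the cumulant characterizes a random measure in $\cM^+$, and $(\PN_N(r_N))$ is tight (Lemma~\ref{lem:tightness}) with each subsequential limit having cumulant $\k(f)=-\ln\E\exp(-\la\eta,f\ra)$ for every $f\in C_K$, the whole sequence converges in distribution in $\cM^+$ to a random measure with the law of $\eta$.

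The main obstacle is the second step: deriving \eqref{eq:plan-cumul} rigorously from the branching property (handling the a.s.\ finiteness and the conditioning on $\{P_N=r_N\}$), and then passing to the limit in $\widehat{\tgN}(\k_N(f))$, where the interplay between $\k_N(f)\to\bar\k(f)$ and the weak convergence $\tgN\convlaw\th$ must be handled with care --- the Portmanteau/$\eps/3$ argument above being what lets one avoid proving uniform integrability of $C_N/N$.
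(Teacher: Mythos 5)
Your proposal is correct and takes essentially the same route as the paper: tightness via Lemma \ref{lem:tightness}, the cumulant recursion of Lemma \ref{lem:eqcumu} (your first display is just that equation decomposed over $\{P_N<r_N\}$ and $\{P_N=r_N\}$), passage to the limit with Lemma \ref{lem:convmeas} to obtain the fixed-point equation of Proposition \ref{prop:limcumu}, uniqueness of its solution, and identification of the limit with $\eta$ by the same branching computation as Proposition \ref{prop:samecumu}. The only differences are minor: you bound $\k_N(f)$ directly from the recursion and prove uniqueness via concavity together with the criticality identity $\l\sqrt{c}\,\s\,\E(W_1^+)=1$, whereas the paper argues monotonicity after substituting $x=e^{-\k(f)}$.
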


\subsection{Proof}

We first fix a sequence $(r_N)$ such that \eqref{eq:hyprN1} is verified and proceed as explained in Section \ref{sec:technique} by first proving the tightness.

\begin{lemma} \label{lem:tightness}
The sequence $(\PN_N(r_N))$ is tight in $\cM^+$.
\end{lemma}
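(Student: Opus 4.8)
The plan is to reduce vague tightness to a one-dimensional statement and then exploit conservation of population. By the standard tightness criterion for random measures in $\cM^+$ (vague topology), it is enough to show that for every $f \in C_K$ the real random variables $\langle \PN_N(r_N), f \rangle$ form a tight family: indeed, picking a cofinal sequence $f_j \in C_K$ with $\un_{[1/j,j]} \le f_j \le \un_{[1/(2j),2j]}$ and, for given $\eps>0$, constants $M_j$ with $\sup_N \P(\langle \PN_N(r_N), f_j\rangle > M_j) \le \eps 2^{-j}$, the set $A_\eps = \bigcap_j \{\cP : \langle \cP, f_j\rangle \le M_j\}$ is vaguely closed and assigns mass at most $M_j$ to every compact $K \subseteq [1/j,j]$, hence is vaguely compact, and $\P(\PN_N(r_N) \notin A_\eps) \le \eps$ uniformly in $N$.

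The key combinatorial input is that the populations of the islands of a tree of isles sum to the size of the underlying genealogical tree: $\sum_{u \in \cU} \cA_u(r)\,\unn{\cA_u(r) \neq 0} = s(\cT)$. This follows by induction on $s(\cT)$ from Lemma \ref{lem:popcolRW1}: if $s(\cT) \le r$ the tree of isles is a single island of population $s(\cT)$, while if $s(\cT) > r$ the root has population $r$ and the migrant subtrees $\Tmigr^1, \dots, \Tmigr^{C_r(\cT)}$ are strictly smaller with total size $s(\cT) - r$, so the induction hypothesis applied to each of them gives $r + (s(\cT)-r) = s(\cT)$. Consequently, in $\A^k(r_N)$ the number of islands of population at least $aN^2$ is at most $s(\T^k)/(aN^2)$, and therefore, for $f \in C_K$ supported in $[a,\pinf)$,
\[
\langle \PN_N(r_N), f \rangle \;\le\; \|f\|_\infty\,\PN_N(r_N)([a,\pinf)) \;\le\; \frac{\|f\|_\infty}{a N^2}\sum_{k=1}^N s(\T^k).
\]
So everything comes down to the tightness of $W_N := N^{-2}\sum_{k=1}^N s(\T^k)$.

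Since $s(\T) = \vsinf$ and, by \eqref{eq:tailvsinf}, $\P(\vsinf > t) \le C t^{-1/2}$ for all $t \ge 1$, the variable $\vsinf$ lies in the domain of attraction of the one-sided $\tfrac12$-stable law, so $W_N$ converges in distribution and in particular is tight; alternatively one gets a direct uniform bound by truncating at level $MN^2/2$. A union bound gives $\P(\exists k \le N : s(\T^k) > MN^2/2) \le N\cdot C(MN^2/2)^{-1/2} = C'M^{-1/2}$, and on the complementary event Markov's inequality together with $\E[\vsinf \un_{\vsinf \le K}] \le \int_0^K \P(\vsinf > t)\,dt \le C_1\sqrt K$ bounds $\P(W_N > M)$ by $M^{-1}N^{-1}\cdot C_1\sqrt{MN^2/2} = C''M^{-1/2}$. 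Hence $\sup_N \P(W_N > M) \le (C'+C'')M^{-1/2} \to 0$ as $M \to \pinf$, which yields the required uniform tail bounds on $\langle \PN_N(r_N), f\rangle$ and completes the proof. The only genuinely non-routine point is that $s(\T)$ has infinite mean, so one cannot simply bound $\E[W_N]$; this is precisely what the truncation (equivalently, the $\tfrac12$-stable attraction) takes care of, and everything else is bookkeeping.
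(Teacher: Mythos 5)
Your proof is correct and follows essentially the same route as the paper: you reduce vague tightness, via test functions supported away from $0$, to controlling the rescaled total population $N^{-2}\sum_{k=1}^N s(\T^k)$ (using that the island populations of each tree of isles sum to $s(\T^k)$), and then bound that quantity through the tail of $\vsinf$. The only difference is presentational: where the paper cites Bertoin's argument and the classical convergence of the rescaled total progeny of a critical Galton--Watson forest, you supply a self-contained truncation argument, but the underlying idea is the same.
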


\begin{proof}
This fact can be proven as in Lemma 5 in \cite{BertoinAR}. Indeed, one can readily check that, since our test functions have compact support in $(0,\pinf)$, it follows from the tightness of $\la \PN_N(r_N), \Id \ra$. But the latter is the total population of a Galton-Watson forest started from $N$ ancestors, renormalized by $1/N^2$, and it is well-known that this converges to the total population of a Feller diffusion, see e.g. \cite{LeGallBook}.
\end{proof}

Our next step is to  derive here an equation solved by the cumulant of $\PN_N(r_N)$, given by
\[
\k_N(f) = - \ln \E \left ( \exp - \la \PN_N(r_N) , f \ra \right ) = - N \ln \E \left ( \exp - \la \PN(r_N) , f \ra \right ),
\]
where we recall that $\PN_N(r_N)$ is the sum of $N$ independent copies of $\PN(r_N)$.

\begin{lemma} \label{lem:eqcumu}
The cumulant $\k_N(f)$ solves the following equation
\begin{equation}\label{eq:cumuN}
\exp - \k_N(f) = \E \left ( \exp - \left ( f \left ( \frac{P_N}{N^2} \right ) + \frac{C_N}{N} \k_N(f) \right ) \right )^N
\end{equation}
for every $f \in C_K$.
\end{lemma}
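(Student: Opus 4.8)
The plan is to exploit the branching structure of the tree of isles established in Lemma \ref{lem:treeGW1}, together with the identity $\k_N(f) = -N\ln\E(\exp-\la\PN(r_N),f\ra)$, so it suffices to prove the single-ancestor identity
\[
\E\left(\exp-\la\PN(r_N),f\ra\right) = \E\left(\exp-\left(f\left(\tfrac{P_N}{N^2}\right) + \tfrac{C_N}{N}\k_N(f)\right)\right).
\]
First I would decompose the reference measure $\PN(r_N)$ according to the recursive construction of $\A(r_N)$ from Section \ref{sec:treeofisles}. The root island contributes a single atom $\dl_{\A_\emp(r_N)/N^2} = \dl_{P_N/N^2}$ to $\PN(r_N)$, and the rest of the measure is the superposition of the rescaled tree-of-isles measures attached to the $C_N$ colonies of the root. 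By Lemma \ref{lem:treeGW1} (more precisely the branching property coming from Lemma \ref{lem:explproc}), conditionally on $(P_N, C_N)$, these $C_N$ subtrees of isles are i.i.d.\ with the same law as $\A(r_N)$ itself; hence the measures they generate are i.i.d.\ copies of $\PN(r_N)$. Writing $\PN(r_N) = \dl_{P_N/N^2} + \sum_{j=1}^{C_N} \cP^{(j)}$ with the $\cP^{(j)}$ i.i.d.\ copies of $\PN(r_N)$ independent of $(P_N,C_N)$ given $(P_N,C_N)$, I then compute
\[
\E\left(e^{-\la\PN(r_N),f\ra}\right) = \E\left(e^{-f(P_N/N^2)}\,\E\left(e^{-\la\cP^{(1)},f\ra}\right)^{C_N}\right) = \E\left(e^{-f(P_N/N^2)}\,\big(e^{-\k_N(f)/N}\big)^{C_N}\right),
\]
where the last step uses $\E(e^{-\la\cP^{(1)},f\ra}) = \E(e^{-\la\PN(r_N),f\ra}) = e^{-\k_N(f)/N}$ by definition of $\k_N$. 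Recognizing $(e^{-\k_N(f)/N})^{C_N} = e^{-(C_N/N)\k_N(f)}$ gives exactly the single-ancestor identity, and raising both sides to the power $N$ (since $\PN_N(r_N)$ is a sum of $N$ independent copies of $\PN(r_N)$) yields \eqref{eq:cumuN}.

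The only genuinely substantive point — and the one I would be most careful about — is the conditional independence/branching step: that given $(P_N, C_N)$, the pruned subtrees $\Tmigr^1,\dots,\Tmigr^{C_N}$ hanging off the migrant children are i.i.d.\ copies of $\T$, so that feeding them through the (deterministic) tree-of-isles construction produces i.i.d.\ copies of $\A(r_N)$ and hence of $\PN(r_N)$. This is precisely the content of the second bullet of Lemma \ref{lem:explproc} applied to the breadth-first labeling: the set of migrant children forms a stopping line, so the branching property transfers. Everything else is a routine manipulation of Laplace functionals, and the finiteness of all trees involved (which is a.s.\ true since $\rho$ is critical) ensures the sums are finite and the interchanges of expectation are legitimate.
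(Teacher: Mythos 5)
Your proposal is correct and follows essentially the same route as the paper: conditioning on $(P_N,C_N)$, using the branching property of the tree of isles (Lemma \ref{lem:treeGW1}, ultimately Lemma \ref{lem:explproc}) to write the measure as the root atom plus $C_N$ conditionally i.i.d.\ copies of $\PN(r_N)$, and then manipulating the Laplace functional exactly as the paper does via the identity $\E(e^{-\la \PN(r_N),f\ra})=e^{-\k_N(f)/N}$. The only difference is cosmetic: the paper phrases the same decomposition in terms of the generation-indexed process $(Z_{i,p}(r_N))$ rather than directly on the measure.
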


\begin{proof}
Lemma \ref{lem:treeGW1} and the branching property show that $(Z_{0,p}(r_N))_{p=1,\dots,r_N}$ has the same law as $(\unn{P_N=1},\dots,\unn{P_N=r_N})$, and that, conditionally on $P_N$ and $C_N$, $(Z_{i,\cdot}(r_N))_{i \geq 1}$ is independent from $Z_{0,\cdot}$ and has the same law as the sum of $C_N$ independent copies of $(Z_{i,\cdot}(r_N))_{i \geq 0}$. Hence, conditioning on $P_N$ and $C_N$, we may write
\begin{align*}
\exp - \k_N(f) & = \E \left ( \exp - \la \PN_N(r_N) , f \ra \right ) \\
& = \E \left ( \exp - \sum_{i \geq 0} \sum_{p=1}^{r_N} Z_{i,p}(r_N) f(p/N^2) \right )^N \\
& = \E \left ( \exp - \sum_{p=1}^{r_N} Z_{0,p}(r_N) f(p/N^2) \times \E \left ( \exp - \sum_{i \geq 0} \sum_{p=1}^{r_N} Z_{i,p}(r_N) f(p/N^2) \right )^{C_N} \right )^N \\
& = \E \left ( \exp - \sum_{p=1}^{r_N} \unn{P_N = p} f(p/N^2) \times \E ( \exp - \PN(r_N) )^{C_N} \right )^N \\
& = \E \left ( \exp - f(P_N/N^2) \times \E ( \exp - \PN_N(r_N) )^{C_N/N} \right )^N \\
& = \E \left ( \exp - f(P_N/N^2) \times (\exp - \k_N(f))^{C_N/N} \right )^N
\end{align*}
and the result follows.
\end{proof}

This lemma allows us to give an equation solved by any limit point of $(\k_N(f))$ (which exist by Lemma \ref{lem:tightness}).

\begin{prop} \label{prop:limcumu}
For $f \in C_K$, any limit point $\k(f)$ of $(\k_N(f))$ is the unique solution to the following equation
\begin{equation} \label{eq:cumu}
\exp - k(f)  = \l \left (\int_0^{\pinf} (1-e^{-f(x)}) \mu^c(\ddx) + \int_0^{\pinf}  \left ( 1 - e^{-f(c)- \k(f) x} \right ) \th(\ddx) \right ).
\end{equation}
\end{prop}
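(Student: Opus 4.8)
The plan is to pass to the limit in the fixed-point equation \eqref{eq:cumuN} of Lemma \ref{lem:eqcumu}, using the asymptotics of Lemma \ref{lem:convmeas} together with the fact that $f$ has compact support in $(0,\pinf)$. First I would rewrite the right-hand side of \eqref{eq:cumuN} as $(1 - A_N)^N$, where
\[
A_N = \E\left(1 - \exp-\left(f\!\left(\tfrac{P_N}{N^2}\right) + \tfrac{C_N}{N}\k_N(f)\right)\right),
\]
so that $\k_N(f) = -N\ln(1 - A_N)$. Since $(\k_N(f))$ is bounded along the subsequence we are considering (this is where tightness, Lemma \ref{lem:tightness}, is used), $A_N \to 0$, hence $N A_N$ and $\k_N(f)$ have the same limit $\k(f)$ along that subsequence, and the equation to establish becomes $\k(f) = \lim_N N A_N$, i.e. that the right-hand side of \eqref{eq:cumu} is $\lim_N N A_N$.

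Next I would split $A_N$ according to whether the root island is fertile or not. On the event $\{P_N < r_N\}$ one has $C_N = 0$, so the contribution is $\P(P_N < r_N)\,\E\big(1 - e^{-f(P_N/N^2)} \mid P_N < r_N\big) = (1-p_N)\int (1 - e^{-f(x)})\,\tpiN(\ddx)$; multiplying by $N$ and invoking Lemma \ref{lem:convmeas} ($N\tpiN \Rightarrow \l\mu^c$, $p_N \to 0$) this converges to $\l\int_0^{\pinf}(1 - e^{-f(x)})\,\mu^c(\ddx)$. Here one must check that $x \mapsto 1 - e^{-f(x)}$, which need not be in $C_K$ (it is continuous and bounded but not compactly supported away from $0$), can still be integrated against the vaguely convergent sequence $N\tpiN$: this follows because $f$ has compact support in $(0,\pinf)$, so $1 - e^{-f}$ vanishes on a neighbourhood of $0$ and on a neighbourhood of $\pinf$ it is bounded while $N\tpiN\big((M,\pinf)\big) \to \l\mu^c((M,\pinf)) = \l/\sqrt{M}$, allowing a standard truncation argument. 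On the complementary event $\{P_N = r_N\}$, which has probability $p_N \sim \l/N$, one has $P_N/N^2 = r_N/N^2 \to c$ and $C_N/N$ conditioned on this event converges in law to $\th$; thus $N p_N\, \E\big(1 - e^{-f(r_N/N^2) - (C_N/N)\k_N(f)} \mid P_N = r_N\big) \to \l\int_0^{\pinf}\big(1 - e^{-f(c) - \k(f)x}\big)\,\th(\ddx)$, using $f(r_N/N^2)\to f(c)$ by continuity and $\k_N(f)\to\k(f)$, together with dominated convergence (the integrand is bounded by $1$). Summing the two contributions gives exactly the right-hand side of \eqref{eq:cumu}, so every subsequential limit solves that equation.

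Finally I would verify that \eqref{eq:cumu} has a unique solution $\k(f) \ge 0$. Writing the right-hand side as $\Phi(\k(f))$, one checks that $\k \mapsto e^{-\k}$ is strictly decreasing while $\k \mapsto \Phi(\k) = \l\big(\int(1-e^{-f})\,\mu^c + \int(1 - e^{-f(c)-\k x})\,\th(\ddx)\big)$ is nondecreasing in $\k$ (and bounded), so the two curves cross at exactly one point; one should also note $\Phi(0) \le \l\big(\mu^c((0,c)) \cdot 1 + 1\big) < \pinf$ but in fact the relevant comparison is that $e^{-0} = 1$ while $\Phi$ stays bounded, and strict monotonicity on each side gives uniqueness. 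Since $(\k_N(f))$ is bounded and all its subsequential limits coincide with this unique solution, $(\k_N(f))$ itself converges to $\k(f)$; combined with tightness (Lemma \ref{lem:tightness}) and the fact that the cumulant determines the law of a random measure, this yields the convergence of $(\PN_N(r_N))$ in $\cM^+$ to a limit whose cumulant solves \eqref{eq:cumu}. The identification of that limit with $\eta$ is deferred to the subsequent computation showing that the cumulant of $\eta$ satisfies the same equation \eqref{eq:cumu}.

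I expect the main obstacle to be the interchange of limit and integration in the non-fertile term: controlling the mass that $N\tpiN$ places near $\pinf$ (so that integrating the non-compactly-supported function $1 - e^{-f}$ is legitimate), and more delicately handling the regime $c < x < r_N/N^2$ where $r_N/N^2 \to c$ from above --- this is precisely the ``second term tends to $0$'' point already met in the proof of Lemma \ref{lem:convmeas}, and the same argument applies here. A secondary technical point is to make the passage from $\k_N(f) = -N\ln(1-A_N)$ to $N A_N$ rigorous uniformly along the subsequence, which again only needs the boundedness of $(\k_N(f))$ furnished by tightness.
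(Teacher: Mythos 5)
Your proposal is correct and takes essentially the same route as the paper: pass to the limit in \eqref{eq:cumuN}, split the expectation according to $\{P_N < r_N\}$ versus $\{P_N = r_N\}$, apply Lemma \ref{lem:convmeas} to each piece, and deduce uniqueness from strict monotonicity (the paper phrases this via a derivative computation after setting $x = e^{-\k(f)}$). Two minor points: $x \mapsto 1-e^{-f(x)}$ actually belongs to $C_K$ (it vanishes exactly where $f$ does), so your truncation argument is superfluous, and in the fertile term plain dominated convergence does not literally apply since the law $\tgN$ changes with $N$ --- the paper instead uses that the integrands $g_N$ are bounded, uniformly Lipschitz and converge pointwise, combined with $\tgN \convlaw \th$, a fix your stated ingredients ($f(r_N/N^2)\to f(c)$, $\k_N(f)\to\k(f)$, boundedness) already provide.
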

\noindent Note that the first term on the right-hand side is the cumulant of a Poisson random measure with intensity $\mu^c$.
\begin{proof}
We assume for simplicity that $(\k_N(f))$ converges to some $\k(f)$. Let us investigate the behavior of the right-hand term of Equation \eqref{eq:cumuN}. The expectation therein tends to $1$, so we just have to study
\[
\begin{split}
N \E \left (1 - e^{-f (P_N/N^2) - \k_N(f) C_N/N} \right ) = & N \E \left ( \left ( 1 - e^{-f(P_N/N^2)} \right ) \unn{P_N < r_N} \right ) \\
& + N \E \left ( \left ( 1 - e^{-f (r_N/N^2)-\k_N(f) C_N/N} \right ) \unn{P_N = r_N} \right ) .
\end{split}
\]

\begin{enumerate}[fullwidth]
\item We may rewrite the first term on the RHS as
\[
\P(P_N < r_N) N \E \left ( \left ( 1 - e^{-f(P_N/N^2)} \right ) \middle | \unn{P_N < r_N} \right ) = (1-p_N) \int_0^{\pinf} \left ( 1 - e^{-f(x)} \right ) N \tpiN(\ddx).
\]
Since $x \mapsto 1 - e^{-f(x)} \in C_K$, Lemma \ref{lem:convmeas} ensures that this tends to
\[
\int_0^{\pinf} \left ( 1 - e^{-f(x)} \right ) \mu^c(\ddx).
\]

\item Now, the second term on the RHS is
\begin{align*}
& N \E \left ( \left ( 1 - e^{-f (r_N/N^2) - \k_N(f) C_N/N} \right ) \unn{P_N = r_N} \right ) \\
& = N p_N \int_0^{\pinf} \left ( 1 - e^{-f (r_N/N^2) - \k_N(f) x} \right ) \tgN(\ddx) \\
& := N p_N \int_0^{\pinf} g_N(x) \tgN(\ddx).
\end{align*}
The sequence $(g_N)$ converges pointwise to
\[
g \, : \, \mapsto 1 - e^{-f(c)-\k(f) x}
\]
and all the $g_N$'s are bounded and uniformly Lipschitz-continuous. Since $\tgN \to \th$ by Lemma \ref{lem:convmeas}, it is then straightforward that
\[
\int_0^{\pinf} g_N(x) \tgN(\ddx) \to \int_0^{\pinf} g(x) \th(\ddx).
\]
The result then follows after from the convergence $N p_N \to \l$.

\item It only remains to prove that the equation has a unique solution. Taking $x = e^{-\k(f)}$ has the unknown, we may rewrite it
\[
x = A - B \E (x^{\s \sqrt{c} W_1^+}),
\]
with $B > 0$. The function $x \mapsto x - A + B \E (x^{\s \sqrt{c} W_1^+})$ has derivative
\[
1 + B \s \sqrt{c} \E ( W_1^+ x^{\s \sqrt{c} W_1^+ - 1}).
\]
This last quantity is positive, whence the result follows. \qedhere
\end{enumerate}
\end{proof}

The last step is to check that the measure $\eta$ defined in Section \ref{sec:heuristics} has the same law as the limit measure we just obtained, and to this end, one only need to prove the following.

\begin{prop} \label{prop:samecumu}
The cumulant of the random measure $\eta$ solves Equation \eqref{eq:cumu}.
\end{prop}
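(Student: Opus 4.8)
The plan is to read the Laplace functional of $\eta$ directly off the branching structure of the fertility tree $T$, show it is an explicit exponential, and observe that the exponent is forced to satisfy \eqref{eq:cumu}. Fix $f\in C_K$. I would first record the finiteness facts needed: since $f$ is bounded with support compact in $(0,\pinf)$, $\int_0^{\pinf} f(x)\,\mu^c(\ddx)<\pinf$ (the only singularity of $\mu^c$ is at $0$), so each $\la\nu_u(\f_u),f\ra$ is a.s.\ finite; and $\gTh$ has mean $1$, so $T$ is a.s.\ finite. Thus all the sums below converge and there is no measurability issue beyond the one settled in \cite{Harris,Atney}. For $\varphi\ge 0$, let $\zeta_\varphi$ be the random measure built exactly as $\eta+\dl_c$ but from a Galton--Watson fertility tree rooted at an individual of fertility $\varphi$, using an independent i.i.d.\ copy of the family $(\nu_u(\cdot))_{u\in\cU}$, and set $\psi(\varphi):=\E(\exp-\la\zeta_\varphi,f\ra)\in(0,1]$. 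By definition of $\eta$, $\E(\exp-\la\eta,f\ra)=e^{f(c)}\psi(1)$.

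Next I would write the branching identity for $\zeta_\varphi$. The root of its defining tree has fertility $\varphi$, hence $K\sim\mathrm{Poisson}(\varphi\l)$ children carrying i.i.d.\ fertilities $\f_1,\dots,\f_K$ of law $\th$; splitting the i.i.d.\ family $(\nu_u(\cdot))_{u\in\cU}$ according to the first letter of $u$ and applying the branching property, conditionally on $K$ and $(\f_i)_i$ the subtree measures are independent with the respective laws of $\zeta_{\f_1},\dots,\zeta_{\f_K}$ and independent of the root's contribution $\nu_\emptyset(\varphi)+\dl_c$. This gives
\[
\psi(\varphi)=\E\big(\exp-\la\nu_\emptyset(\varphi),f\ra\big)\,e^{-f(c)}\,\E\Big(\prod_{i=1}^K\psi(\f_i)\Big).
\]
The first factor is the Laplace functional of a Poisson random measure of intensity $\varphi\l\mu^c$, namely $\exp\big(-\varphi\l\int(1-e^{-f(x)})\mu^c(\ddx)\big)$, and the last factor is $\exp\big(-\varphi\l\int(1-\psi(x))\th(\ddx)\big)$ by the Poisson generating-function formula. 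Hence, with $\b:=\l\int(1-e^{-f(x)})\mu^c(\ddx)+\l\int(1-\psi(x))\th(\ddx)$ (finite, since $0\le 1-\psi\le 1$ and $\int f\,\mu^c(\ddx)<\pinf$), one gets $\psi(\varphi)=e^{-f(c)}e^{-\varphi\b}$ for all $\varphi\ge 0$.

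I would then substitute this explicit form back into the definition of $\b$: since $\int(1-\psi(x))\th(\ddx)=\int(1-e^{-f(c)-\b x})\th(\ddx)$, the number $\b$ solves
\[
\b=\l\left(\int_0^{\pinf}(1-e^{-f(x)})\mu^c(\ddx)+\int_0^{\pinf}(1-e^{-f(c)-\b x})\th(\ddx)\right).
\]
Finally, $\E(\exp-\la\eta,f\ra)=e^{f(c)}\psi(1)=e^{-\b}$, so the cumulant of $\eta$ equals $\b$; replacing $\b$ by $\k_\eta(f)$ in the last display shows that $\k_\eta(f)$ solves \eqref{eq:cumu}, as claimed. Together with the uniqueness proved in Proposition \ref{prop:limcumu} and the tightness of Lemma \ref{lem:tightness}, this identifies the limit of $(\PN_N(r_N))$ with $\eta$ and finishes Theorem \ref{th:conv1}.

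The main obstacle is the bookkeeping in the second step: showing rigorously that, after re-rooting, the subtree measures are genuinely i.i.d.\ copies of the $\zeta_{\f_i}$ and independent of $\nu_\emptyset(\varphi)+\dl_c$. This rests on partitioning the i.i.d.\ family $(\nu_u)_{u\in\cU}$ along the first letter of $u$ --- so that $\nu_\emptyset$ and the subfamilies $(\nu_{iv})_{v\in\cU}$, $i\ge 1$, are mutually independent with the correct marginals --- together with the branching property of multitype Galton--Watson trees, which is precisely the construction/measurability point deferred to \cite{Harris,Atney}. Everything else is a routine application of the Poisson Laplace and generating-function formulas.
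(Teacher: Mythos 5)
Your proposal is correct and takes essentially the same route as the paper: you condition on the root's Poisson number of children with i.i.d.\ $\th$-fertilities, use the exponential formula for the root's Poisson measure $\nu_\emp(\varphi)$ and the Poisson generating function over the subtrees, deduce that the Laplace functional is exponential in the root fertility (the paper phrases this as $\phi(\f)e^{g(c)}=(\phi(1)e^{g(c)})^{\f}$), and substitute back to obtain the fixed-point equation \eqref{eq:cumu} for the cumulant. The only cosmetic discrepancy is with the misprinted left-hand side of \eqref{eq:cumu} ($\exp-\k(f)$ instead of $\k(f)$); your conclusion matches the equation as actually derived and used in the paper.
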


\begin{proof}
We denote $T(\f)$ a tree constructed as in Section \ref{sec:heuristics}, but starting instead from an ancestor of fertility $\f$, and let $\eta(\f)$ be the corresponding measure. For $g \in C_K$, consider
\[
\phi(\f) = \E ( \exp - \la \eta + \dl_c, g \ra ).
\]
Note that we count $\dl_c$ for the virtual ancestor to simplify the computations (in particular, the cumulant of $\eta \eqlaw \eta(1)$ is $- \ln \phi(1) - g(c)$). Indeed, the branching property shows that in the tree $T(\f)$, conditionally on the number $k$ of individuals at the first generation, which is Poissonian with parameter $\l \f$, each subtree rooted at the first generation accounts for a measure $\eta^j$ such that 
\begin{itemize}
	\item the $\eta^j$'s are independent from the measure $\nu_{\emp}(f_{\emp}) + \dl_c$ generated by the root,
	\item $\eta^j$ has the same law as $\eta(\f_j) + \dl_c$, where the $\f_j$'s are i.i.d. with law $\theta$.
\end{itemize}
Hence, conditioning on the number of children at the first generation and using the exponential formula for Poisson measures, we have
\begin{align*}
\phi(\f) & = \E (\exp - \la \eta_{\emp}(\f) + \dl_c , g \ra ) \sum_{k \geq 0} e^{-\l \f} \frac{(\l \f)^k}{k!} \left ( \int_0^{\pinf} \E ( \exp - \la \eta(s) + \dl_c,g \ra ) \th(\dds) \right)^k \\
 & = e^{- \l \f} e^{-g(c)} \E ( \exp - \la \eta_{\emp}(\f), g \ra ) \sum_{k \geq 0} \frac{(\l \f)^k}{k!} \left ( \int_0^{\pinf} \phi(s) \th(\dds) \right)^k  \\
 & = e^{- \l \f} e^{-g(c)} \exp - \f \l \int_0^{\pinf} (1 - e^{-g(s)}) \mu^c(\dds) \exp \left ( \l \f \int_0^{\pinf} \phi(s) \th(\dds) \right )  \\
 & = \exp - \left ( g(c) + \f \l \int_0^{\pinf} (1 - e^{-g(s)}) \mu^c(\dds) + \l \f \left ( 1 - \int_0^{\pinf} \phi(s) \th(\dds) \right ) \right ) .
\end{align*}
This shows that
\[
\phi(\f)e^{g(c)} = \left ( \phi(1) e^{g(c)} \right )^{\f}.
\]
Plugging this in the above formula readily shows that $\Phi := - \ln \phi(1)$ solves 
\[
\Phi = g(c) + \l \int_0^{\pinf}c (1-e^{-g(s)}) \mu(\dds) + \l \left ( 1 - \int_0^{\pinf} e^{-g(c)-s (\Phi-g(c))} \th(\dds) \right )
\]
so that $\Phi - g(c)$, which is the cumulant of $\eta$, also solves \eqref{eq:cumu}.
\end{proof}

\subsection{Genealogy of the islands}

\subsubsection{Introduction}

The arguments and heuristics mentioned in Section \ref{sec:heuristics} should make it clear that a result concerning the genealogy of the island could be obtained, which is lost when considering merely the empirical measure $\PN_N(r_N)$. We shall thus now give an idea of what the ``genealogical tree'' of the islands looks like at the limit when $N \to \pinf$. Obviously, this tree is infinite, since e.g. we start from $N \to \pinf$ islands. On the other hand, the heuristics of Section \ref{sec:heuristics} suggest that the tree consisting only of fertile islands should be finite, more precisely be a Galton-Watson tree with the (critical) Cox reproduction law $\gTh$, so that the whole genealogical tree of the islands has a.s. finite height (but infinite width). Inspired by Definition 1 in \cite{BertoinToA}, we will now introduce the definition of a tree-indexed Continuous State Branching Process with types\footnote{The types being what we called populations, but we wish to give the most general definition here.}.

Let us define in the following $\cMinf$ the subset of $\cM^+$ of measures which integrate $1$ at infinity. In particular, if $\mu \in \cMinf$, we can rank the atoms of a Poisson measure with intensity $\mu$ in the decreasing order, which allows the following definition to make sense. If $\mu$ is finite, there is only a finite number of such atoms, and we shall always complete this decreasing sequence with an infinite sequence of zeros.

\begin{defn}
Consider a measurable space $T$, and a family of $\s$-finite measures $(\rho_t)_{t \in T}$ on $T \times (0,\pinf)$, such that, for every $t \in T$, $\rho_t ( T \times \cdot) \in \cMinf$. Fix $t_0 \in T$ and $\f_0 \geq 0$. A tree-indexed CSBP with types, with reproduction laws $(\rho_t)_{t \in T}$, started from $(t_0,\f_0)$, is a process $(\cZ_u)_{u \in \cU}$ indexed by the universal tree $\cU$, with values in $T \times (0,\pinf)$, such that
\begin{itemize}
	\item $\cZ_{\emp}= (t_0,\f_0)$ a.s.;
	\item for every $k \in \Z^+$, conditionally on $(\cZ_u, u \in \cU, |u| \leq k)$,
		\begin{itemize}
			\item the sequences $(\cZ_{u \, j})_{j \in \N}$, for $|u| = k$, are independent;
			\item for $|u| = k$, writing $\cZ_u = (t,\f)$, the sequence $(\cZ_{u \, j})_{j \in \N}$ is distributed as the family of the atoms of a Poisson measure with intensity $\f \rho_t$, where atoms are repeated according to their multiplicity and ranked in the decreasing order of their second coordinate.
		\end{itemize}
\end{itemize}

\end{defn}

Notice that the branching property holds with respect to the second variable, that is, the independent sum of a CSBP with parameters $(t_0,\f_0,(\rho_t))$ and one with parameters $(t_0,\f'_0,(\rho_t))$ is a CSBP with parameters $(t_0,\f_0+\f'_0,(\rho_t))$.

\begin{figure}[htb]
\centering
\includegraphics[width= \columnwidth]{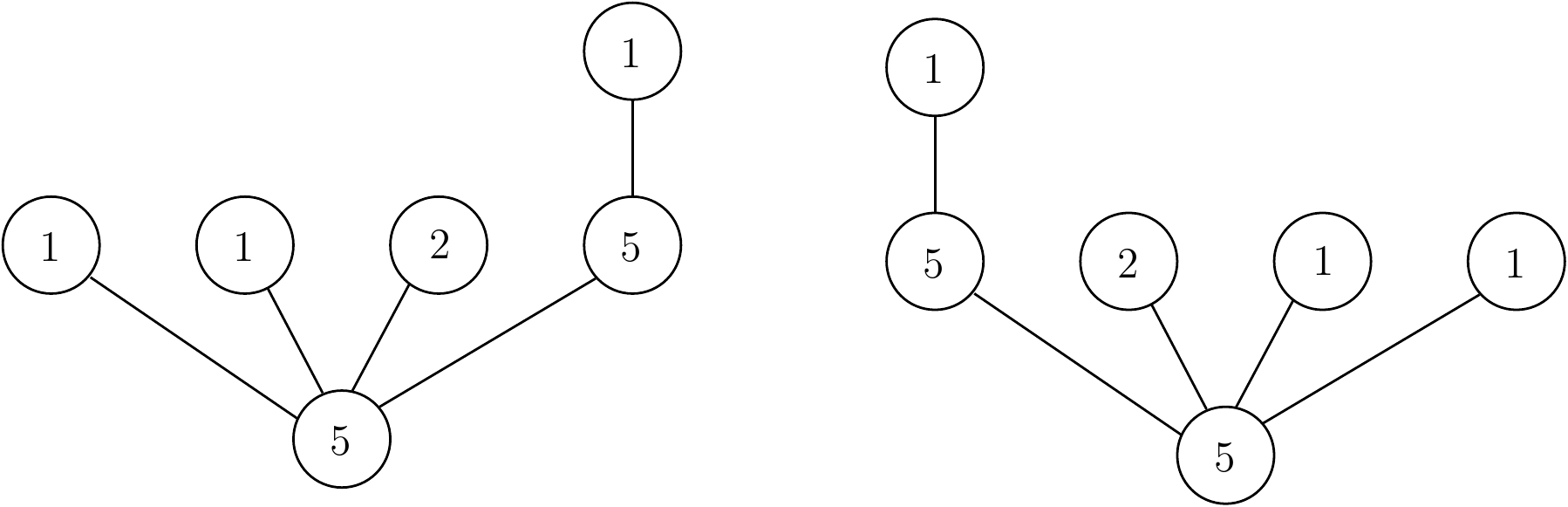}
\caption{The tree of isles from Figure \ref{fig:tree_of_isles_2}, and its reordering.}
\label{fig:tree_of_isles_reordered}
\end{figure}

Let us define the object we shall study. As before, we now call the types ``population'', the vertices ``islands'' and the descendants ``colonies''. We have a forest of i.i.d. trees of isles 
\[
\left ( \A^1(r_N),\dots,\A^N(r_N) \right ).
\]
We root these trees at $1, \dots, N$, and link them to $\emp$, to which we give population $r_N$. We call $\FN$ the tree obtained and once again, we give population 0 to the islands in $\cU \bsl \FN$. We define the tree $\FNr$ by reordering the colonies of each island, along with their subtree, in the decreasing order of their population, leaving unchanged the initial order if ties occur: see Figure \ref{fig:tree_of_isles_reordered}. The following result deals with its convergence. We may actually prove a slightly stronger result and to this end, let us first introduce the $\cU$-indexed processes $\cZN$ and $\cZNr$ defined by
\[
\cZN_u = \left ( \FN_u / N^2, k_u(\FN)/N \right ), \quad \cZNr_u = \left ( \FNr_u / N^2, k_u(\FNr)/N \right ), \quad u \in \cU,
\]
where we recall that $k_u(\FN)$ is the number of colonies of $u$ in $\FN$, or in formulas,
\[
k_u(\FN) = \# \{ j \in \N, \FN_{u \, j} \neq 0 \}.
\]
We shall prove the following result, where we denote $q_1$ the projection on the first coordinate.

\begin{thm} \label{th:convtree}
The process $(\cZNr)$ converges as $n \to \pinf$, in the sense of finite dimensional distributions, to $\cZ$, where $\cZ$ is a tree-indexed CSBP with types, started from $(c,1)$, with reproduction law $\l (\mu^c + \dl_c) \otimes \th$ for the type $c$, and $\dl_0 \otimes \dl_0$ for the others. In particular $(\FNr)$ converges to $q_1(\cZ)$.
\end{thm}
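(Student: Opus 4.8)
The plan is to establish convergence of the reordered processes $(\cZNr)$ to the tree-indexed CSBP $\cZ$ by an induction on the generation, exploiting the branching structure of the tree of isles (Lemma \ref{lem:treeGW1}) together with the marginal convergences already proven in Lemma \ref{lem:convmeas}. Since convergence in the sense of finite-dimensional distributions means convergence of $(\cZNr_u)_{|u| \leq k}$ for each fixed $k$, I would proceed level by level. At generation $0$, the root carries $\cZN_\emp = (r_N/N^2, k_\emp(\FN)/N)$, and since $r_N/N^2 \to c$ while $k_\emp(\FN) = N$ is the number of initial islands, the root converges deterministically to $(c,1)$, matching the starting point of $\cZ$.

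The inductive step is where the real work lies. Assume the claim holds up to generation $k$; I want to describe the colonies of a generation-$k$ island, conditionally on its rescaled population and number of colonies $(t,\f)$. By Lemma \ref{lem:treeGW1}, an island of population $< r_N$ has no colonies, so only islands with population exactly $r_N$ (i.e.\ rescaled population $\approx c$) contribute, and this is reflected in $\cZ$ having reproduction law $\dl_0 \otimes \dl_0$ for types $\neq c$. For a fertile island, it has $C_N$ colonies with $C_N/N$ converging to the $\th$-distributed fertility, and each colony independently has population distributed according to $\pi_N$; by Lemma \ref{lem:treeGW1} again these are i.i.d.\ given the count. The key point is the one already flagged in the heuristics of Section \ref{sec:heuristics}: since $p_N \sim \l/N$ is the chance that a given colony is itself fertile and $N\tpiN \Rightarrow \l\mu^c$, the collection of (population, number-of-colonies) pairs of the $\approx \f N$ colonies, once we rescale populations by $1/N^2$ and colony-counts by $1/N$, converges to a Poisson measure on $(0,\pinf)\times(0,\pinf)$ with intensity $\f\l(\mu^c + \dl_c)\otimes\th$ --- with the understanding that the non-fertile colonies land at second coordinate $0$ (no colonies of their own, since $\mu^c$ is supported on $(0,c)$ corresponding to populations below $r_N$) and hence are irrelevant to deeper generations, while the fertile ones, approximately Poisson$(\f\l)$ in number, each receive an independent $\th$-fertility. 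This is a standard Poissonian-approximation / thinning argument for triangular arrays: one checks convergence of the intensity measures against test functions in $C_K$ and uses independence across colonies. Reordering the colonies in decreasing order of population is exactly the operation of listing the atoms of the Poisson measure in decreasing order of the first coordinate, which is well-defined because the relevant intensity $\l(\mu^c+\dl_c)$ integrates $1$ near infinity (indeed it is supported on the bounded set $(0,c]$), so $\cMinf$-membership holds and the definition of the tree-indexed CSBP applies; passing to $\cZNr$ rather than $\cZN$ is precisely what makes the limit well-defined as a process rather than just an unordered family.

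Assembling these pieces: conditionally on generation $k$ converging to $\cZ$ restricted to levels $\leq k$, the families of children at each generation-$k$ vertex are independent (branching property of $\A(r_N)$, Lemma \ref{lem:treeGW1}), and each converges, after the $1/N^2$ and $1/N$ rescalings and the reordering, to an independent copy of the Poisson offspring family prescribed in the definition of $\cZ$ --- so generation $k+1$ converges jointly, by continuous mapping together with the conditional independence. Taking the induction through all $k$ gives convergence of $(\cZNr)$ to $\cZ$ in the finite-dimensional sense, and projecting on the first coordinate via $q_1$ gives the stated convergence of $(\FNr)$. The main obstacle is the triangular-array Poisson limit for the offspring family: one must handle simultaneously the two rescalings (populations by $N^2$, counts by $N$), verify that the joint intensity factorizes as $\l(\mu^c+\dl_c)\otimes\th$ in the limit, and control the fact that the total number of colonies $C_N/N$ is itself random and converging --- so the convergence must be made uniform enough in the conditioning value $\f$ to survive integration against $\th$, exactly as in the treatment of the term $Np_N\int g_N\,\dd\tgN$ in the proof of Proposition \ref{prop:limcumu}. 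Measurability of the various functionals on trees follows as in \cite{BertoinToA}.
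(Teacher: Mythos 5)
Your plan is correct and follows essentially the same route as the paper: an induction over generations using the branching property of the tree of isles, a ranked-Poisson limit for the rescaled offspring family (fertile colonies first, then the non-fertile ones as ranked atoms of a Poisson measure with intensity $\f\l\mu^c$), a binomial-to-Poisson approximation for the number of fertile colonies, and the uniformity in $\f$ you flag to integrate against the converging fertility law. The paper formalizes exactly these points via product test functions $E_k$, an induction hypothesis strengthened to arbitrary converging initial fertility laws $\nuN \Rightarrow \nu$ (which handles the fact that fertile children carry fertility law $\tgN$ rather than $\th$), and Lemmas \ref{lem:convpoisson}--\ref{lem:unifconvpoisson} together with Le Cam's inequality, but these are implementations of the steps you describe rather than different ideas.
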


It is worth noticing two features of this result. First, one cannot construct the limit of $(\FNr)$ directly. To this end, we need first to construct $\cZ$, and then project it on the first coordinate; but the knowledge of the second coordinate is necessary to get the whole process. This is why we prove the more general convergence of the two-coordinate process in order to obtain the convergence of $(\FNr)$.

Note also that this result agrees with Theorem \ref{th:conv1}, in that the measure generated by the tree $\cZ$ (except for his root)
\[
\eta' := \sum_{u \in \cU^*} \dl_{q_1(\cZ_u)} \unn{q_1(\cZ_u) \neq 0}
\]
has the same law as the measures of Theorem \ref{th:conv1}. Indeed, similar calculations as in the proof below and the proof of Proposition \ref{prop:samecumu} can be carried out to show that the cumulant of $\eta'$ solves Equation \eqref{eq:cumu}. However, since we only show a result dealing with the convergence of finite-dimensional marginals, we cannot deduce Theorem \ref{th:conv1} from Theorem \ref{th:convtree}. Doing this would require to introduce a relevant topology on the tree-indexed CSBP (with types or not) and prove the tightness results associated. This technical and long detour would not bring, we believe, much more understanding of the model.

\subsubsection{Some results about Poisson random measures} \label{sec:threelemmas}

Let us start with three preliminary lemmas. The first is essentially a classical fact (see e.g. \cite{BertoinToA}), and merely rephrases Theorem 16.18 in \cite{Kallenberg}.

\begin{lemma} \label{lem:convpoisson}
Let $(\nuN)$ be a sequence of probability measures $(0,\pinf)$, and assume that
\[
N \nuN \Rightarrow \nu
\]
as $N \to \pinf$, for some $\nu \in \cMinf$. Let, for each $N$, $(Y^{(N)}_i)_{i \geq 0}$ a sequence of i.i.d. random variables with law $\nuN$, and take, for $\f > 0$, $(\rmaN_1(\f),\dots,\rmaN_{\f N}(\f))$ the reordering of $(Y^{(N)}_i)_{i = 1 \dots \f N}$ in the decreasing order. Then, for every fixed $k \geq 1$,
\[
(\rmaN_1(\f),\dots,\rmaN_k(\f)) \to (\rma_1(\f),\dots,\rma_k(\f)),
\]
where $(\rma_i(\f))_{i \geq 0}$ is the reordering in the decreasing order of the atoms of a Poisson random measure with intensity $\f \nu$.
\end{lemma}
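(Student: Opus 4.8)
The plan is to reduce the statement to a standard Poisson convergence theorem for point processes (essentially Kallenberg's Theorem 16.18) by viewing the quantities of interest as the top-$k$ order statistics of a suitable point process. First I would set up notation: for each $N$, let $\Pi^{(N)}$ be the point process on $(0,\pinf)$ given by $\sum_{i=1}^{\f N} \dl_{Y^{(N)}_i}$ (with $\f N$ an integer, per the standing convention), and let $\Pi$ be a Poisson random measure on $(0,\pinf)$ with intensity $\f\nu$. The first key step is to show $\Pi^{(N)} \Rightarrow \Pi$ in distribution in the space of point measures on $(0,\pinf)$ equipped with the vague topology. This is exactly the ``null array'' / triangular array version of the Poisson limit theorem: the $\f N$ summands $\dl_{Y^{(N)}_i}$ are i.i.d., each charging any fixed compact $K \subset (0,\pinf)$ with probability $\nuN(K) \to 0$, and $\f N \cdot \nuN \Rightarrow \f\nu$ by hypothesis. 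Kallenberg's criterion then gives $\E[\Pi^{(N)}(K)] \to \f\nu(K)$ and $\P(\Pi^{(N)}(K)=0) \to e^{-\f\nu(K)}$ for continuity sets, which yields the claimed vague convergence.

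The second step is to pass from vague convergence of the point processes to convergence of the top-$k$ order statistics. Here I would use that $\nu \in \cMinf$, i.e. $\nu$ integrates $1$ near $+\infty$ (equivalently $\nu((a,\pinf)) < \pinf$ for all $a>0$), so the atoms of $\Pi$ have an a.s. well-defined decreasing enumeration $\rma_1(\f) \geq \rma_2(\f) \geq \cdots$ with no accumulation at $\pinf$; moreover, for any fixed $k$, the event $\{\rma_k(\f) = \rma_{k+1}(\f)\}$ and the event $\{\Pi$ has an atom exactly at a given level$\}$ have probability zero, so the map ``point measure $\mapsto$ ($k$-th largest atom)'' is a.s. continuous at $\Pi$ when restricted to locally finite measures that are finite on $(a,\pinf)$. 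The only subtlety is that vague convergence on $(0,\pinf)$ does not by itself control mass escaping to $+\infty$; but since $\nu$ is finite on each $(a,\pinf)$ and $\f N \nuN((a,\pinf)) \to \f\nu((a,\pinf))$, one has tightness of the number of ``large'' points, so no mass escapes upward and the top order statistics are determined by the restriction of the measures to a fixed set $(a,\pinf)$ with $a$ small. Combining this with the continuous mapping theorem gives $(\rmaN_1(\f),\dots,\rmaN_k(\f)) \to (\rma_1(\f),\dots,\rma_k(\f))$ in distribution, which is the assertion (noting the convergence is in law, consistent with the other convergences in the paper).

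I expect the main obstacle to be the careful handling of the point $+\infty$: one must argue that although the test function space $C_K$ for the vague topology excludes neighborhoods of $\pinf$, the hypothesis $\nu \in \cMinf$ together with $N\nuN \Rightarrow \nu$ upgrades vague convergence to enough control that the largest atoms converge. Concretely I would fix $a>0$ a continuity level of $\nu$ (so that $\nu(\{a\})=0$), restrict attention to $(a,\pinf)$ on which both $\f N \nuN$ and $\f\nu$ are finite measures and the convergence is therefore weak, and let $a \downarrow 0$ at the end using that $\rma_k(\f) > 0$ a.s. and $\Pi((a,\pinf)) \to \Pi((0,\pinf)) \geq k$ with probability tending to $1$ as $a \downarrow 0$. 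Everything else — the Poisson limit theorem for the triangular array and the a.s. continuity of the order-statistic functionals — is classical, so I would cite \cite{Kallenberg} for the former and spell out only the elementary argument ruling out ties and upward mass escape.
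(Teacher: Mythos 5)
The paper offers no proof of this lemma at all: it records it as a classical fact that ``merely rephrases Theorem 16.18 in \cite{Kallenberg}'', which is exactly the result you invoke, so your route coincides with the paper's, and your elaboration of the passage from vague convergence of the point processes $\sum_i \delta_{Y^{(N)}_i}$ to convergence of the top-$k$ order statistics is the standard way to fill in what the paper leaves implicit. One caveat: the step where you assert $\f N\,\nu^{(N)}((a,+\infty))\to \f\,\nu((a,+\infty))$ is not a consequence of the stated hypothesis, because vague convergence on $(0,+\infty)$ only controls relatively compact continuity sets and says nothing about mass escaping to $+\infty$; for instance $\nu^{(N)}=(1-2/N)\delta_{1/N}+N^{-1}\delta_1+N^{-1}\delta_N$ satisfies $N\nu^{(N)}\Rightarrow\delta_1$, yet the largest of the $\f N$ sample points equals $N$ with probability tending to $1-e^{-\f}$, so the conclusion itself fails. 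This is an imprecision of the lemma's literal statement rather than a defect specific to your argument: in the paper the lemma is only applied to the laws $\tpiN$, which are supported in $(0,r_N/N^2]$, so the upward tightness you need is automatic there; to make your proof self-contained you should either add this tail-convergence (or bounded-support) condition as a hypothesis or point out that it holds in the intended application.
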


Consider in the following $\nu \in \cMinf$, and let $(\rma_i(\f))_{i \geq 1}$ as in the previous statement. To make sense of the coming results, and since it is of use in the proofs, let us recall how we can construct a Poisson measure with intensity $\f \nu$ in a measurable way. Let $(A_i)_{i \geq 1}$ be a partition of $\R^+$ in measurable subsets of finite $\nu$-measure. Define for $i \geq 1$ such that $\nu(A_i) > 0$,
\[
\l_i = \nu(A_i), \quad \nu_i = \nu(\cdot \cap A_i)/\l_i.
\]
Independently for each $i$, let $(N^i(\f))_{\f \geq 0}$ be a Poisson process with intensity $\l_i$ and $(X^i_j)_{j \geq 1}$ a sequence of i.i.d. random variables with law $\nu_i$, and define
\[
\xi^i(\f) = \sum_{j=1}^{N^i(\f)} \dl_{X^i_j}, \quad \xi = \sum_{i \geq 1} \xi^i.
\]
Then $\xi(\f)$ is a Poisson random measure with intensity $\f \nu$.

\begin{lemma} \label{lem:contpoisson}
For every continuous $g \colon \R^+ \to \R$ with compact support and every $r \in \N$, the mapping
\[
\f \mapsto \E ( g ( \rma_r(\f)))
\]
is continuous on $\R^+$.
\end{lemma}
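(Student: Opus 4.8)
The plan is to fix $r$ and a continuous compactly supported $g$, and to show that $\f \mapsto \E(g(\rma_r(\f)))$ is continuous by expressing the law of $\rma_r(\f)$ explicitly in terms of $\f$ through the construction recalled just above the statement. First I would reduce to the case where $\nu$ is finite: since $g$ has compact support contained in some $[0,M]$, only the atoms of $\xi(\f)$ falling in $(0,M]$ matter, and $\nu$ restricted to $(0,M]$ is finite (as $\nu$ is Radon on $(0,\pinf)$, hence finite on compacts; the behaviour near $0$ is harmless because $g$ vanishes there). Write $\Lambda = \nu((0,M])$. Replacing $\nu$ by $\nu|_{(0,M]}$ changes neither $\rma_r(\f)$ (on the event that there are at least $r$ atoms in $(0,M]$, which is all we need since $g$ is supported in $(0,M]$ and $\rma_r(\f)=0$ otherwise, contributing $g(0)$) nor the continuity claim in an essential way; more precisely $\E(g(\rma_r(\f))) = \E(g(\rma_r^M(\f))\unn{\rma_r^M(\f)>0}) + g(0)\,\P(\text{fewer than } r \text{ atoms in } (0,M])$, where $\rma^M$ uses the finite measure, so it suffices to treat the finite case and track the extra Poissonian factor $e^{-\Lambda\f}\sum_{j<r}(\Lambda\f)^j/j!$, which is manifestly continuous (indeed analytic) in $\f$.

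With $\nu$ finite of total mass $\Lambda$, the Poisson measure $\xi(\f)$ has a Poisson$(\Lambda\f)$ number $K$ of atoms, which, given $K=n$, are i.i.d. with law $\bar\nu := \nu/\Lambda$. Hence, conditionally on $K = n \ge r$, the $r$-th largest atom $\rma_r(\f)$ has the law of the $r$-th largest order statistic of $n$ i.i.d. $\bar\nu$-variables, which does not depend on $\f$; call $m_{r,n} := \E(g(X_{(n-r+1)}))$ the corresponding expectation (with $X_{(1)}\le\dots\le X_{(n)}$ the order statistics), and set $m_{r,n} := g(0)$ for $n < r$ to account for the convention $\rma_r(\f)=0$. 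Then
\[
\E(g(\rma_r(\f))) = \sum_{n \ge 0} e^{-\Lambda \f} \frac{(\Lambda \f)^n}{n!} \, m_{r,n}.
\]
The key step is to conclude continuity of this series in $\f$. Since $\|g\|_\infty < \infty$, one has $|m_{r,n}| \le \|g\|_\infty$ for all $n$, so each term is bounded by $\|g\|_\infty e^{-\Lambda\f}(\Lambda\f)^n/n!$; on any compact interval $\f \in [0,A]$ this is dominated by $\|g\|_\infty (\Lambda A)^n/n!$, which is summable. By the Weierstrass $M$-test the series converges uniformly on $[0,A]$, and since each summand is continuous in $\f$, the sum is continuous on $[0,A]$; as $A$ is arbitrary, it is continuous on $\R^+$.

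I do not expect a genuine obstacle here — the statement is essentially the observation that a Poisson mixture with uniformly bounded mixands depends continuously (indeed real-analytically) on the Poisson parameter, and the only mild care needed is the truncation argument handling the infinite measure $\nu$ near $0$ and at $\infty$, together with keeping the convention $\rma_r(\f) = 0$ when there are fewer than $r$ atoms consistent throughout. If one wanted, the same computation gives more: $\f \mapsto \E(g(\rma_r(\f)))$ extends to an entire function of $\f$, but continuity on $\R^+$ is all that is claimed and all that is used in the sequel.
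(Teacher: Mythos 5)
There is a genuine gap in your reduction step, and it occurs exactly in the case the lemma is used for. You claim that $\nu$ restricted to $(0,M]$ is finite ``as $\nu$ is Radon on $(0,\pinf)$, hence finite on compacts''; but $(0,M]$ is not relatively compact in $(0,\pinf)$, and a measure of $\cMinf$ is only required to be finite away from $0$ (this is what makes the decreasing ranking of atoms possible), not near $0$. In the application of this lemma one takes $\nu = \l \mu^c$, and $\mu^c((0,M]) = \int_0^{c \wedge M} \tfrac{1}{2} x^{-3/2} \dx = \pinf$, so your $\Lambda$ is infinite, there is no Poisson$(\Lambda \f)$ number of atoms $K$, and the whole mixture representation $\sum_n e^{-\Lambda\f}(\Lambda\f)^n m_{r,n}/n!$ collapses precisely where it is needed. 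A second, independent error is the claim that only atoms in $(0,M]$ matter: atoms of $\xi(\f)$ lying above $M$ change which atom is the $r$-th largest, so your identity $\E(g(\rma_r(\f))) = \E(g(\rma_r^M(\f))\unn{\rma_r^M(\f)>0}) + g(0)\,\P(\text{fewer than } r \text{ atoms in } (0,M])$ fails as soon as $\nu((M,\pinf))>0$ (e.g.\ with one atom above $M$ and $r=1$, the left side sees $g\equiv 0$ above $M$ while the right side sees $g$ of the largest atom in $(0,M]$).

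The Poisson-mixture continuity idea itself is sound for a finite intensity, so the proof can be repaired by truncating on the correct side: for $\eps>0$ the mass $\a=\nu((\eps,\pinf))$ \emph{is} finite, and $\rma_r(\f)>\eps$ if and only if the Poisson measure has at least $r$ atoms in $(\eps,\pinf)$, so $\P(\rma_r(\f)>\eps)=\sum_{k\geq r}e^{-\f\a}(\f\a)^k/k!$ is continuous (indeed analytic) in $\f$; one then either approximates $g$ by combinations of indicators $\unn{x>\eps}$ (this is the paper's route), or applies your mixture computation to $\nu$ restricted to $(\eps,\pinf)$ and controls the remaining term $\E(g(\rma_r(\f))\unn{\rma_r(\f)\leq\eps})$ using the continuity of $g$ at $0$ together with the continuity in $\f$ of $\P(\rma_r(\f)\leq\eps)$. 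As written, however, your truncation is the wrong one and the argument does not cover the measures the lemma is applied to.
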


\begin{proof}
Let $\eps > 0$. The measure $\nu$ is in $\cMinf$, so $\a := \nu( ( \eps, \pinf ) )$ is finite. Then $\rma_r(\f) > \eps$ if and only if there are $r$ or more atoms of the Poisson measure in $(\eps,\pinf)$, so
\[
\P ( \rma_r (\f) \in (\eps , \pinf) ) = \sum_{k \geq r} e^{- \f \a} \frac{(\f \a)^k}{k!}
\]
which is clearly continuous in $\f$. Then, the complimentary probability $P(\rma_r(\f) \in [0, \eps))$ is also continuous. The result thus holds for any indicator function, and the result follows by standard approximations.
\end{proof}

\begin{lemma} \label{lem:unifconvpoisson}
In the notation of Lemma \ref{lem:convpoisson}, for every continuous $g \colon \R^+ \to \R$ with bounded variation and compact support, and every $r \in \N$,
\[
\E(g(\rmaN_r(\f))) \to \E(g(\rma_r(\f)))
\]
uniformly on the compact sets of $\R^+$.
\end{lemma}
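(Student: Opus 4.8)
The plan is to combine the pointwise convergence from Lemma~\ref{lem:convpoisson} with the continuity of the limit from Lemma~\ref{lem:contpoisson} in order to upgrade pointwise convergence in $\f$ to uniform convergence on compacts, via a standard Dini-type / equicontinuity argument. First I would fix a continuous $g$ of bounded variation with compact support and a compact interval $[0,M] \subset \R^+$; it suffices to prove uniform convergence of $h_N(\f) := \E(g(\rmaN_r(\f)))$ to $h(\f) := \E(g(\rma_r(\f)))$ on $[0,M]$. From Lemma~\ref{lem:convpoisson} we already know $\rmaN_r(\f) \to \rma_r(\f)$ in distribution for each fixed $\f$, and since $g$ is continuous and bounded this gives $h_N(\f) \to h(\f)$ pointwise on $[0,M]$; from Lemma~\ref{lem:contpoisson} the limit $h$ is continuous on $[0,M]$.

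The key step is to establish that the family $(h_N)$ is equicontinuous on $[0,M]$, or at least asymptotically equicontinuous; together with pointwise convergence to a continuous limit on a compact set, this yields uniform convergence by the usual $\eps/3$ argument. To get equicontinuity I would exploit the structure of $\rmaN_r(\f)$ as the $r$-th largest of $\f N$ i.i.d.\ samples from $\nuN$, and more importantly use the bounded-variation hypothesis on $g$: writing $g$ as the difference of two bounded monotone functions, it is enough to control $h_N$ for $g$ monotone, where $\E(g(\rmaN_r(\f)))$ can be expressed through the tail probabilities $\P(\rmaN_r(\f) > x)$. For fixed $x > 0$, $\P(\rmaN_r(\f) > x)$ is the probability that $\mathrm{Bin}(\f N, \nuN((x,\pinf)))$ is $\geq r$, and since $N\nuN((x,\pinf)) \to \nu((x,\pinf))$ this binomial is close to $\mathrm{Poisson}(\f \cdot N\nuN((x,\pinf)))$; the modulus of continuity of $\f \mapsto \P(\mathrm{Poisson}(\f a) \geq r)$ on $[0,M]$ is controlled uniformly in $a$ ranging over any bounded set, which gives a uniform-in-$N$ (for $N$ large) and uniform-in-$x \geq \eps$ modulus of continuity for the tails. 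Integrating against $dg$ over the compact support of $g$ — and handling the region near $x = 0$ using that there $\rmaN_r(\f)$ and $\rma_r(\f)$ both behave like the $r$-th largest of infinitely many tiny atoms, or simply that $g$ is continuous so contributes negligibly on small neighborhoods — yields the desired equicontinuity of $(h_N)$ on $[0,M]$.

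Concretely, the argument I would write out is: given $\eps > 0$, choose $\dl > 0$ so that $|h(\f) - h(\f')| < \eps/3$ for $|\f - \f'| < \dl$ (continuity of $h$, uniform on $[0,M]$); by the equicontinuity estimate choose $\dl$ smaller if needed so that $|h_N(\f) - h_N(\f')| < \eps/3$ for all large $N$ and $|\f-\f'|<\dl$; cover $[0,M]$ by finitely many balls of radius $\dl$ centered at points $\f_1,\dots,\f_p$; use pointwise convergence $h_N(\f_i) \to h(\f_i)$ to find $N_0$ with $|h_N(\f_i) - h(\f_i)| < \eps/3$ for all $i$ and $N \geq N_0$; then for arbitrary $\f \in [0,M]$, picking the nearest $\f_i$, the triangle inequality gives $|h_N(\f) - h(\f)| < \eps$.

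The main obstacle is the equicontinuity estimate: one must make the modulus of continuity in $\f$ uniform in $N$, and the delicate point is the behavior near $x = 0$, where $\nu$ may have infinite mass and the number of atoms of the Poisson measure (resp.\ the empirical measure) below any threshold blows up. The bounded-variation assumption on $g$ is what makes this tractable, because it lets us reduce to controlling the tail functions $x \mapsto \P(\rmaN_r(\f) > x)$ only on the support of $dg$, which is bounded away from $0$; on that region $\nu((x,\pinf))$ stays bounded and the Poisson-approximation to the binomial gives the needed uniformity. The rest is a routine three-epsilon compactness argument.
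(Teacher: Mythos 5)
Your route is genuinely different from the paper's and is viable in outline, but the step on which everything hinges — the (asymptotic) equicontinuity of $h_N(\f)=\E(g(\rmaN_r(\f)))$ — is justified by an incorrect claim. You reduce to the tail probabilities $\P(\rmaN_r(\f)>x)$ and assert that bounded variation lets you control them ``only on the support of $dg$, which is bounded away from $0$''. That is false in general: the lemma allows $g$ continuous with compact support in $\R^+=[0,\pinf)$, so $g$ may be nonzero at $0$ (take $g(x)=(1-x)\vee 0$) and $dg$ may charge every neighbourhood of $0$; moreover in the application $\nu=\l\mu^c$ has infinite mass near $0$, so $N\nuN((x,\pinf))$ blows up as $x\downarrow 0$ and your modulus bound of order $|\f-\f'|\,N\nuN((x,\pinf))$ is not uniform in $x$ there. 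The fallback remark that ``$g$ is continuous so contributes negligibly on small neighborhoods'' is not an argument as it stands, since the mass of $dg$ near $0$ need not be small. The gap is repairable, but by a different mechanism: for $\f<\f'$ use that $\rmaN_r(\f)\leq\rmaN_r(\f')$ and split on the event $\{\rmaN_r(\f')>\eps\}$. On the complement both order statistics lie in $[0,\eps]$ and uniform continuity of $g$ gives a bound $\omega_g(\eps)$; on the event, if the two order statistics differ then at least one of the roughly $(\f'-\f)N$ additional samples exceeds $\eps$, an event of probability at most about $(\f'-\f)\,N\nuN((\eps,\pinf))+O(1/N)$, uniformly for large $N$ since $N\nuN((\eps,\pinf))$ stays bounded. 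Note that this repair uses only uniform continuity of $g$, not bounded variation, so your scheme, once fixed, actually proves a slightly stronger statement.

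For comparison, the paper's proof is much shorter and uses the bounded-variation hypothesis in a different way: since $\f\mapsto\rmaN_r(\f)$ is nondecreasing by construction, writing $g=g^+-g^-$ with $g^\pm$ continuous and nondecreasing makes $\f\mapsto\E(g^\pm(\rmaN_r(\f)))$ monotone; these converge pointwise by Lemma \ref{lem:convpoisson} and dominated convergence, the limits are continuous by Lemma \ref{lem:contpoisson}, and Dini's second theorem upgrades this to uniform convergence on compacts. So where you need a quantitative equicontinuity estimate (the delicate point you misplace near $x=0$), the paper trades it for monotonicity in $\f$ plus Dini; your approach, correctly completed, dispenses with bounded variation but at the cost of the coupling estimate above.
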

\noindent This is in particular true, and that is all we will use, for a Lipschitz-continuous $g$ with compact support.
\begin{proof}
The function $g$ has bounded variation, so it may be written as $g = g^+ - g^-$, where $g^+$ and $g^-$ are nondecreasing and continuous. Take $A > 0$ and consider the mappings
\[
\phi_n^{\pm} : \f \mapsto \E(g^{\pm}(\rmaN_r(\f)))
\]
on $[0,A]$. It is obvious, in the construction above, that $\f \mapsto \rmaN_r(\f)$ is nondecreasing, thus so do $\phi_n^{\pm}$. Lemma \ref{lem:convpoisson} ensures that $\rmaN_r(\f) \to \rma_r(\f)$, so by dominated convergence, $\phi_n^{\pm}$ converges simply to $\phi^{\pm}$, where
\[
\phi^{\pm}(\f) = \E(g^{\pm}(\rma_r(\f))).
\]
Now, Lemma \ref{lem:contpoisson} shows that $\phi^{\pm}$ are continuous, so the result follows from Dini's second theorem.
\end{proof}

\subsubsection{Proof of Theorem \ref{th:convtree}}

The proof will rely heavily on Lemmas \ref{lem:treeGW1} and \ref{lem:convmeas}, which we will use without further notice. 
\begin{enumerate}[fullwidth]
	\item Let us start with some preliminary definitions. Recall that we say that $u \in \FN$ if $\FN_u \neq 0$. For $u \in \FN$, $\FN_u$ is called its population, instead of type, and $k_u(\FN)/N$ its fertility. An island in $\FN$ of population $\rN := r_N/N^2$ is said to be fertile. For a function $\cZ$ indexed by $\cU$ and $u \in \cU$, $\cZ^{u +}$ is the function obtained by shifting the subtree rooted at $u$ back to $\emp$, so for instance, $\cZ^{u+}_{\emp} = \cZ_u$.

Let $\cF$ be the set of functions from $\cU$ to $(\R^+)^2$. For $k \in \Z^+$, we define $E_k$ the set of functions $g$ from $\cF$ to $\R^+$ which can be written as
\begin{equation} \label{Ek}
g(\cT) = g_{\emp} (\cT_{\emp}) g_1(\cT_1) \dots g_s(\cT_s) g_{1 \, 1}(\cT_{1 \, 1}) \dots g_{s \dots s}(\cT_{s \dots s})
\end{equation}
for every $\cT \in \cF$, for some $s \in \N$, where the $g_u$'s are Lipschitz-continuous with compact support in $(\R^+)^2$, and the last index consists of $k$ letters $s$. In particular, such a $g$ depends only on the $k$ first generations. If $\cT$ consists only on a root with population $p$, that is if $\cT_{\emp} = (p,0)$ and $\cT_u = (0,0)$ for $u \in \cU^*$, then we denote $g(p) := g(\cT) = g_{\emp}(p,0)$.

  \item In the sequel, we will need to consider $\FN(\f)$ a tree constructed as $\FN$, but with $\f N$ islands at generation $1$ (so $\FN$ has the law of $\FN(1)$). We say that this tree has fertility $\f$. More generally, we may take $\f$ to be random with some law $\nu$, and we write $\FN(\nu)$ for the corresponding tree. We may also construct as above $\FNr(\nu)$, $\cZN(\nu)$ and $\cZNr(\nu)$. Rather than $\E(g(\cZNr(\nu)))$, we may write $E_{\nu}(g(\cZN))$. We need these trees because of the following observation: consider a fertile island $u \in \FN(\nu)$, $u \neq \emp$. Then $u$ has a fertility chosen according to $\tgN$, so the branching property shows that $\F^{(N),u+}$ has the law of $\FN(\tgN)$, and $\cZ^{(N),u+}$ has the law of $\cZN(\tgN)$.

Now, take an island $u \in \FN(\nu)$, and assume it is fertile with fertility $\f$. Amongst $u \, 1, \dots, u \: \f n$, each has independently a probability $p_N$ to be fertile, so the number of fertile islands is binomial with parameters $\f N$ and $p_N$. The fertile trees have the law of $\FN(\gN)$, and the other ones have the law of the trivial tree with only a root and population chosen according to $\tpiN$. Let us condition on $u$ having $k$ fertile colonies, and consider the populations $\YN_1(\f),\dots,\YN_{\f N - k}(\f)$ of the non-fertile children. $(\YN_1(\f),\dots,\YN_{\f N - k}(\f))$ is then an i.i.d. sequence with law $\tpiN$.

Let us reorder the tree and take a look at the first $s$ colonies of the root of $\FNr(\nu)$. Take $\XNf$ a binomial variable with parameters $\f N$ and $p_N$. For $1 \leq i \leq s$, the analysis above shows that with probability $\P(\XNf \wedge s = i)$, the following happens:
\begin{itemize}
	\item the $i$ first islands are fertile;
	\item the population of the $s-i$ following islands has the law of $(\rmaN_{1}(\f),\dots,\rmaN_{s-i}(\f))$ where we denote $(\rmaN_1(\f),\dots,\rmaN_{\f N - i}(\f))$ the reordering in decreasing order of a $(\f N - i)$-sample of law $\tpiN$.
\end{itemize}

\item Let us now reason by induction, the assumption $\cP_r$ at rank $r$ being that for every $f \in E_k$ and every laws $\nuN, \nu$ on $\R^+$ such that $\nuN \convlaw \nu$, we have
\begin{equation} \label{rec}
\E_{\nuN} \left ( f \left ( \cZNr \right ) \right ) \to \E_{\nu} \left ( f (\cZ) \right ),
\end{equation}
where $\cZ$ is defined in Theorem \ref{th:convtree}. This obviously implies the result by picking $\nuN = \nu = \dl_1$.

$\cP_0$ is easy to check by weak convergence of $\nuN$ to $\nu$ and uniform continuity of $\f_{\emp}$. So assume $\cP_{r-1}$, and take $g \in E_r$, which we write
\[
g(\cZ) = g(\cZ_{\emp}) g_1(\cZ^{1+}) \dots g_s(\cZ^{s+}),
\]
with $g_1,\dots,g_s \in E_{r-1}$.

By the analysis above, we may condition the tree $\FN(\nuN)$ on the fertility of the root and its number of fertile children, to obtain
\begin{align*}
\E_{\nuN}(g(\cZN)) & = \int \nuN(\df) g_{\emp}(\rN,\f) \times \\
& \left ( \sum_{i=0}^{s-1} \P(\XNf = i) \E_{\tgN}(g_1(\cZN)) \dots \E_{\tgN}(g_i(\cZN)) \right. \times \\
& \quad \E(g_{i+1}(\rmaN_1(\f))) \dots \E(g_s(\rmaN_{s-i}(\f))) \\
& + \left. \sum_{i \geq s} \P(\XNf = i) \E_{\tgN}(g_1(\cZN)) \dots \E_{\tgN}(g_s(\cZN)) \right ) \\
& = \sum_{i=0}^{s-1} \E_{\tgN}(g_1(\cZN)) \dots \E_{\tgN}(g_i(\cZN)) \times \\
& \int \nuN(\df) g_{\emp}(\rN,\f)\P(\XNf = i) \E(g_{i+1}(\rmaN_1(\f))) \dots \E(g_s(\rmaN_{s-i}(\f))) \\
& + \E_{\tgN}(g_1(\cZN)) \dots \E_{\tgN}(g_p(\cZN)) \int g_{\emp}(\rN,\f) \P(\XNf \geq i) \nuN(\df).
\end{align*}
By the induction hypothesis $\cP_{r-1}$ and Lemma \ref{lem:convmeas},
\[
\E_{\tgN}(g_i(\cZN)) \to \E_{\th}(g_i(\cZ)).
\]
Define $\Xf$ a variable with Poisson law with parameter $\l \f$. Let us prove that the quantity
\begin{equation} \label{eq:lastterm}
\int \nuN(\df) g_{\emp}(\rN,\f)\P(\XNf = i) \E(g_{i+1}(\rmaN_1(\f))) \dots \E(g_s(\rmaN_{s-i}(\f)))
\end{equation}
converges to
\begin{equation} \label{eq:limlastterm}
\int \nu(\df) g_{\emp}(c,\f)\P(\Xf = i) \E(g_{i+1}(\rma_1(\f))) \dots \E(g_s(\rma_{s-i}(\f)))
\end{equation}
where $(\rma_i(\f))_{i \geq 1}$ is the reordering in the decreasing order of the atoms of a Poisson measure with intensity $\f \mu^c$. According to Lemmas \ref{lem:convmeas} and \ref{lem:unifconvpoisson}, we obtain that
\[
\E(g_{i+j}(\rmaN_j(\f))) \to \E(g_{i+j}(\rma_j(\f))), \quad j=1 \dots s-i,
\]
uniformly on the compact sets of $\R^+$. Using this fact and the weak convergence of $\nuN$ to $\nu$, it is easy to see that \eqref{eq:lastterm} has the same limit, if any, as
\begin{equation} \label{eq:lastterm2}
\int \nuN(\df) g_{\emp}(\rN,\f)\P(\XNf = i) \E(g_{i+1}(\rma_1(\f))) \dots \E(g_s(\rma_{s-i}(\f))).
\end{equation}
The difference between \eqref{eq:lastterm2} and \eqref{eq:limlastterm} is bounded, up to a constant, by
\[
\begin{split}
& \int \left |g_{\emp} (\rN,\f) - g_{\emp}(c,\f)\right | \nuN(\df) + \sup \left | g_{\emp} \right | \int_{\mathrm{supp} \: g_{\emp}} \nuN(\df) \left | \P(\XNf = i) - \P(\Xf = i) \right | \\
+ & \left | \int g_{\emp} (c,\f) \nuN(\df) - \int g_{\emp}(c,\f) \nu(\df) \right |.
\end{split}
\]
The first and last term tend to 0 by uniform continuity of $g_{\emp}$ and weak convergence of $\nuN$ to $\nu$. For the second one, Le Cam's inequality \cite{LeCam} gives
\[
\left | \P(\XNf = i) - \P(\Xf = i) \right | \leq \sum_{k=1}^{\f N} p_N^2 = \f O \left ( \frac1N \right ),
\]
where $O$ is uniform in $\f$, what ensures the expected convergence. By the same reasoning, we also have that
\[
\int g_{\emp}(\rN,\f) \P(\XNf \geq i) \nuN(\df) \to \int g_{\emp}(c,\f) \P(\Xf \geq i) \nu(\df).
\]
We hence get that $\E_{\nuN}(g(\cZN))$ tends to
\[
\begin{split}
& \sum_{i=0}^{s-1} \E_{\th}(g_1(\cZ)) \dots \E_{\th}(g_i(\cZ)) \times \int g_{\emp}(c,\f) \nu(\df) \E(g_{i+1}(\rma_1(\f))) \dots \E(g_s(\rma_{s-i}(\f))) \P(\Xf = i) \\
& + \E_{\th}(g_1(\cZ)) \dots \E_{\th}(g_s(\cZ)) \int g_{\emp}(c,\f) \P(\Xf \geq i) \nu(\df).
\end{split}
\]
By a similar computation as just done, we can see that this is precisely $\E_{\nu}(g(\cZ))$, what shows the result.
\end{enumerate}

\section{Second model: regrowing resources} \label{sec:secondmodel}

We shall now present our second model of migration under constraints, which is arguably more natural than the first one. The only difference is the migration rule: now, we shall assume that individuals will migrate when there are too many of them living at the same time on the same island. In other words, we assume that each island has regrowing resources, enough to feed $r$ people at the same time. If a birth happens when $r$ people coexist on a given island, the newborns will migrate each to a different virgin island, containing $r$ resources.

To make this model more natural, we will now consider continuous time trees, so that we can precisely tell which birth event forces certain individuals to migrate. Otherwise, imagine that at some generation, more than $r$ individuals coexist. We would then have to choose which children of which individual migrate. Doing this in a relevant way would force us to choose the migrant individuals uniformly at random, what would introduce some randomness in the construction of the tree of isles from a given (deterministic) tree, what we want to avoid.

\subsection{Tree of isles} \label{sec:treeofisles2}

Start from a finite continuous tree $\cT$, such that two events (birth or death) do not occur at the same time (which is a.s. the case for a continuous Galton-Watson tree). Once again, to define the tree of isles, we just need to define its migrant children. To this end, consider the process $(N_t)$ counting the number of people alive at time $t$, which we assume to be c\`adl\`ag. Define $\tau$ be the first time (if any) such that
\begin{itemize}
	\item for $t \in [0,\tau)$, $N_t \leq r$;
	\item $N_{\tau} > r$.
\end{itemize}
Then the $N_{\tau}- r$ rightmost individuals born at $\tau$ will be migrant children. After this, cut off the trees rooted at these individuals, and proceed identically with the pruned tree, until it is impossible to find other migrant children. See Figure \ref{fig:cont_tree} for an example.

\begin{figure}[htb]
\centering
\includegraphics[width=0.75 \columnwidth]{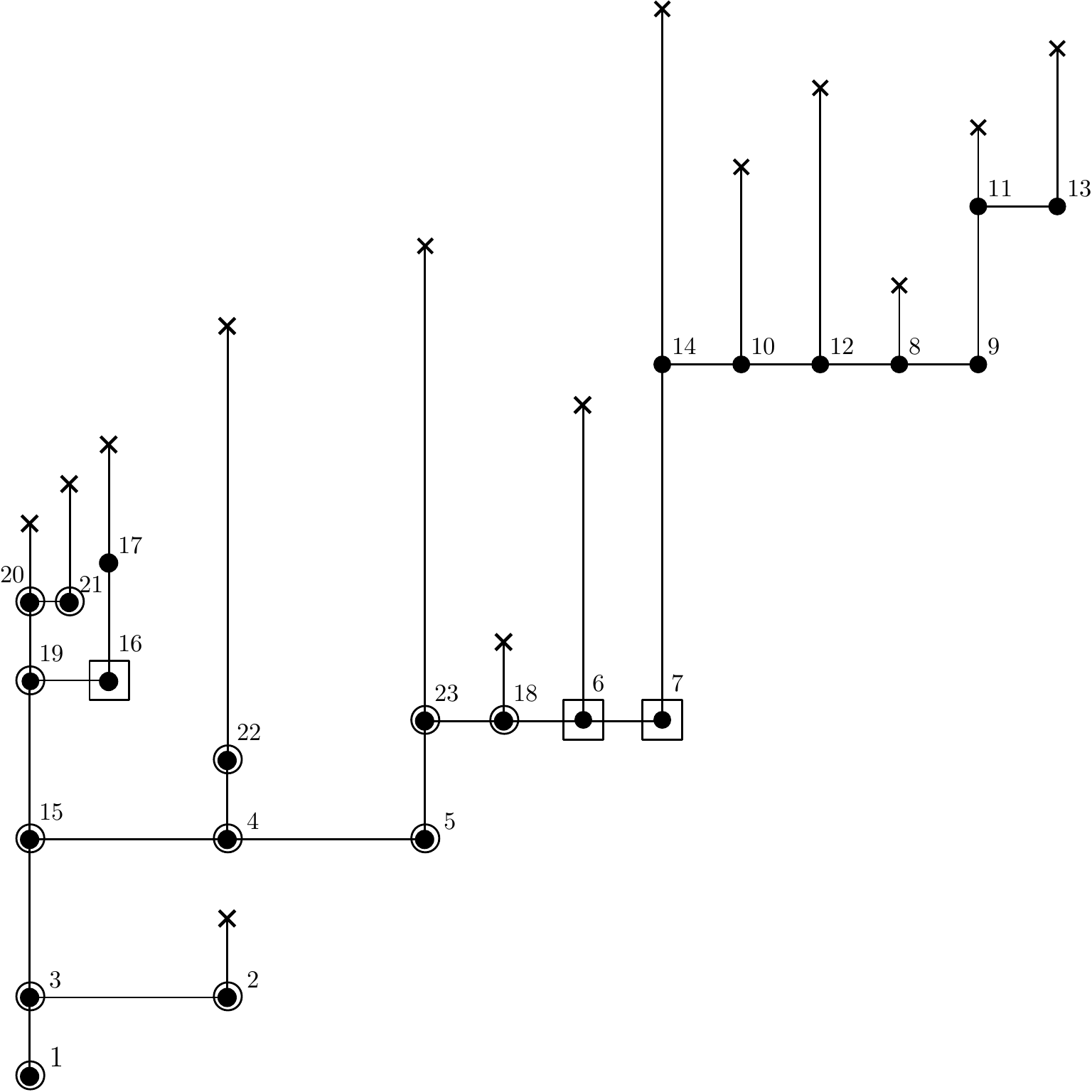}
\caption{A tree $\cT$ and its labeling, for $r=4$. The individuals in a circle remain on the island, those in a square migrate. Here $P_r(\cT) = 12$ and $C_r(\cT) = 3$.}
\label{fig:cont_tree}
\end{figure}

This thus allows us to construct the tree of isles as explained in Section \ref{sec:treeofisles}. An example is given in Figure \ref{fig:tree_of_isles_2}.

\begin{figure}[htb]
\centering
\includegraphics[width=0.25 \columnwidth]{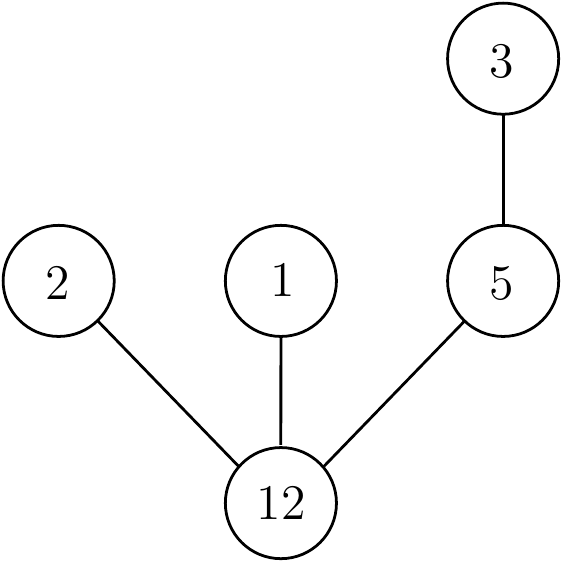}
\caption{The tree of isles $\cA(r)$ drawn from the tree $\cT$ of Figure \ref{fig:cont_tree}, for $r = 4$.}
\label{fig:tree_of_isles_2}
\end{figure}

We shall now explain how to label our tree so that the relevant information, namely its population $P(\cT)$ and number of colonies $C(\cT)$, can easily be read on the corresponding exploration process $(S_n)$. According to Section \ref{sec:explproc}, to define this labeling, we only need to precise the (Markovian) rule on how to choose the next vertex to be labeled.

We shall take the following rule $\cR$, defined for every line $\ell$. For such a line, consider the pruned tree $\cT_{\ell}$, for which we can define migrant individuals thanks to the above algorithm. Take $\ell = \ell^+ \cup \ell^0 \cup \ell^-$ a partition of $\ell$ in
\begin{itemize}
	\item the children of migrants individuals,
	\item the migrant individuals
	\item and the other individuals.
\end{itemize}
Then $\cR$ is defined as follows:
\begin{itemize}
	\item if $\ell^+ \neq \emp$, then $\cR(\ell)$ is the individual in $\ell^+$ which will die the sooner;
	\item if $\ell^+ = \emp$ and $\ell^0 \neq \emp$, then $\cR(\ell)$ is the individual in $\ell^0$ which will die the sooner;
	\item else $\cR(\ell)$ is the individual of $\ell^-$ which will die the sooner.
\end{itemize}
By construction, this is clearly a Markovian rule.

This algorithm is just a modification of the death-first search algorithm. Informally, we apply the latter until we observe more than $r$ people coexisting, say $r'$ of them. When this is the case, $r'-r$ newborns migrate, and for the sake of definiteness, we choose the $r' - r$ rightmost ones. We then explore their subtrees one after another by death-first search, starting with the migrant individual who dies first. When all of these subtrees are explored, we resume the exploration of the initial tree by death-first search, until we see other migrant individuals or the exploration is over. All of this is probably clearer on a picture, see Figure \ref{fig:cont_tree}.

To this labeling, we may thus associate a walk $(S_n)$. As shown on Figure \ref{fig:cont_expl_proc}, let us define
\[
\s_1(r) = 0, \quad \vsinf = \inf \{ n \geq 1, S_n = - 1 \}
\]
where as usual $\inf \emp = \pinf$, and successively 
\[
\vs_i(r) = \inf \{ n > \s_i, S_n > r - 1 \} \wedge \vsinf, \quad \s_{i+1}(r) = \inf \{ n > \vs_i, S_n = r - 1 \} \wedge \vsinf,
\]
so that the walk first hits $[r,\pinf)$ at $\vs_1(r)$, then makes excursions\footnote{We shall use this word in a very loose sense, only the precise definitions should be taken as rigorous. However, and thankfully, ``Brownian excursion'' will have its usual meaning.} above $r$ on each interval $[\vs_i(r),\s_{i+1}(r))$, and below $r$ on each interval $[\s_{i+1}(r),\vs_{i+1}(r))$, before hitting $-1$ at $\vsinf$. To have convenient formulas, we have let all the $\s_i(r)$ and $\vs_i(r)$ to be equal to $\vsinf$ after the walk has hit $-1$. Defines as well
\[
\cO_i(r) = (S_{\vs_i(r)} - (r-1)) \vee 0, \quad i \geq 1
\]
be the overshoot above level $r-1$ (and by definition $0$ after $-1$ is hit), and
\[
\ell_i(r) = \vs_i(r) - \s_i(r), \quad i \geq 1,
\]
so the sum of the $\ell_i(r)$'s is the time spent below $r - 1$ before hitting $-1$. The relation between the population and number of colonies of the initial island and the corresponding exploration process is the following.

\begin{figure}[htb]
\centering
\includegraphics[width=\columnwidth]{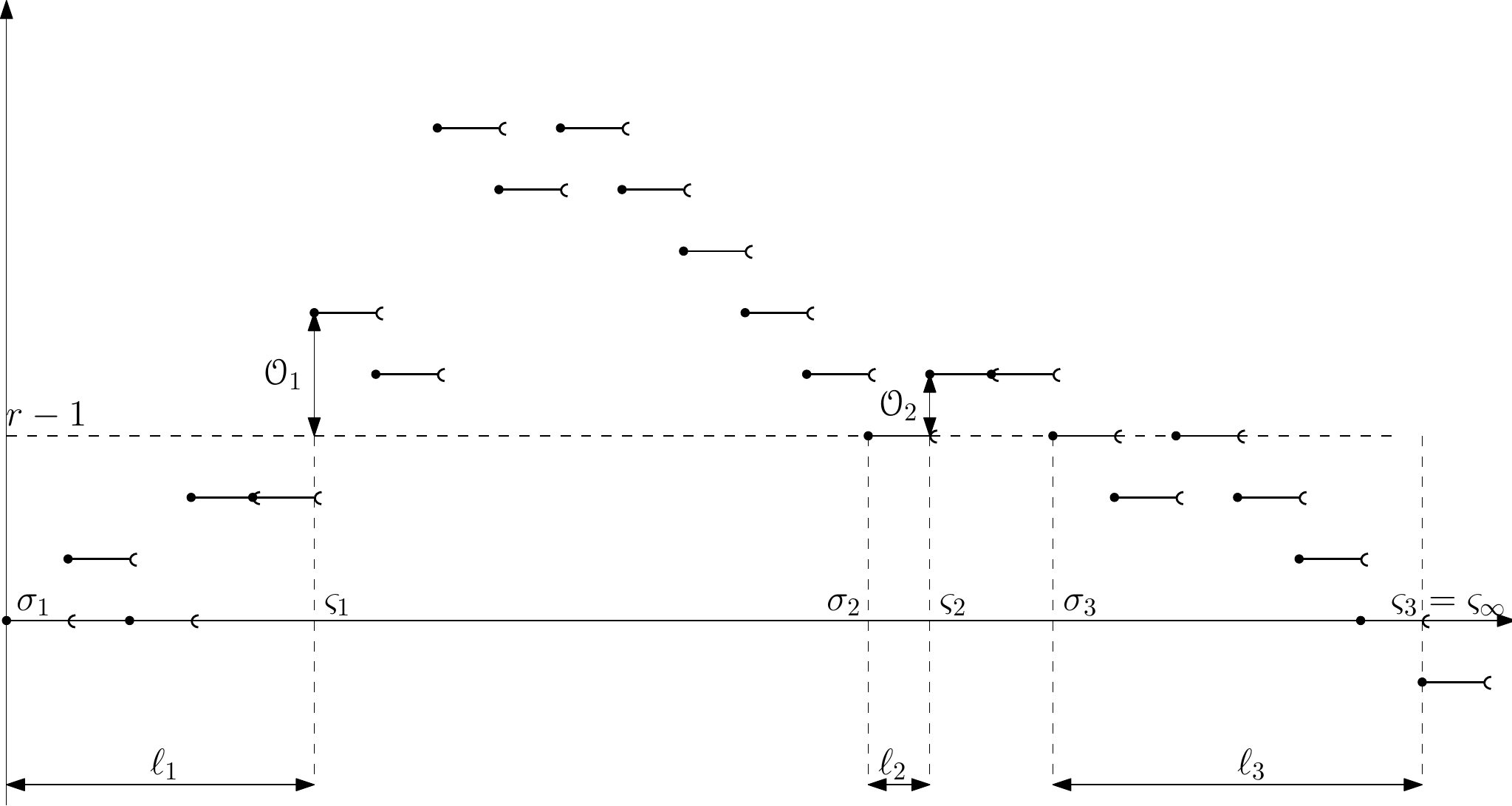}
\caption{The exploration process attached to the labeling of the tree of Figure \ref{fig:cont_tree}, for $r = 4$ (forgetting about the extra notation $r$). Check that $\cO_1(r)+ \cO_2(r) = 3 = C_r(\cT)$ and $\ell_1(r) + \ell_2(r) + \ell_3(r) = 12 = P_r(\cT)$.}
\label{fig:cont_expl_proc}
\end{figure}

\begin{lemma} \label{lem:popcolRW2}
The equalities
\[
P_r(\cT) = \sum_{i=1}^{\pinf} \ell_i(r), \quad C_r(\cT) = \sum_{i=1}^{\pinf} \cO_i(r)
\]
hold.
\end{lemma}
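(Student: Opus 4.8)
The plan is to prove Lemma~\ref{lem:popcolRW2} by induction on the number of ``fertile'' phases the exploration process undergoes, reducing the continuous tree to a simpler one at each step and invoking the death-first search identity from Section~\ref{sec:deathfirst}. First I would recall the basic fact established there: if one labels a continuous tree by death-first search and $0 = \tau_0 < \tau_1 < \dots$ are the successive birth/death event times, then $1+S_i$ equals the number of individuals alive on $[\tau_i,\tau_{i+1})$. The rule $\cR$ we use here coincides with pure death-first search \emph{as long as no line contains a migrant or a child of a migrant} — that is, up to the first time the walk exceeds level $r-1$. So on the interval $[0,\vs_1(r))$ the exploration is exactly death-first search on the (full) tree $\cT$, and the event $\{S_n > r-1\}$ occurs precisely when more than $r$ individuals coexist, i.e. at the time $\tau$ in the construction of Section~\ref{sec:treeofisles2}. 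Thus $\ell_1(r) = \vs_1(r) - \s_1(r) = \vs_1(r)$ is the number of vertices of $\cT$ whose death-first label precedes the first overshoot; this is exactly the number of individuals who are alive at some point before $\tau$ together with those explored before that instant, and one checks it equals the number of non-migrant individuals explored in this first phase. Likewise the overshoot $\cO_1(r) = S_{\vs_1(r)} - (r-1) = (1+S_{\vs_1(r)}) - r = N_\tau - r$, which by definition is exactly the number of migrant children produced at this first migration event.

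Next I would treat the recursive structure. After the first migration event, the rule $\cR$ dictates that we fully explore, one after another by death-first search, the $\cO_1(r)$ subtrees rooted at the migrant children, \emph{before} returning to the pruned tree. During each such subtree exploration the walk $(S_n)$ behaves, relative to the level $r-1$ it sits at upon entering, exactly like a death-first exploration process of that subtree started from its own root; in particular the walk returns to level $r-1$ precisely when a given migrant subtree is exhausted, and these returns are the times $\s_2(r), \s_3(r),\dots$ interleaved with $\vs_2(r),\vs_3(r),\dots$ (the latter being the instants where, \emph{within} the pruned tree, the next migration event occurs). The key bookkeeping point is that the migrant subtrees, being cut off, contribute nothing to $P_r(\cT)$ and their own migrants are counted separately (they belong to deeper colonies in the tree of isles, not to the root), so while exploring them the walk makes excursions \emph{above} $r-1$ and these do not add to $\sum \ell_i$; conversely the pruned tree's remaining portion is explored on the intervals $[\s_{i+1}(r), \vs_{i+1}(r))$ which is where the $\ell_i$'s for $i \geq 2$ accumulate. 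Summing, $\sum_i \ell_i(r)$ collects exactly the labels of all non-migrant individuals of $\cT$, which is $P_r(\cT)$ by definition, and $\sum_i \cO_i(r)$ collects the sizes $N_{\tau^{(j)}} - r$ of all migration events in the root island, which is $C_r(\cT)$.

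To make this rigorous I would set it up as a formal induction on $s(\cT)$ (or on $C_r(\cT)$): if $C_r(\cT) = 0$ then no line ever carries a migrant, $\cR$ is pure death-first search, $\vsinf = s(\cT) \le r$, $\vs_1(r) = \vsinf$, so $\sum \ell_i = \vsinf = s(\cT) = P_r(\cT)$ and $\sum \cO_i = 0 = C_r(\cT)$. For the inductive step, isolate the first migration event as above, obtaining $\ell_1(r)$ and $\cO_1(r) = N_\tau - r$ vertices/migrants; then observe that after the first phase and the exploration of the $\cO_1(r)$ migrant subtrees, the remaining walk increments (shifted in time and space appropriately) are exactly the exploration process of the pruned tree $\cT'$ obtained by removing those migrant subtrees, run from level $r-1$, with its own migration events — so the remaining sums $\sum_{i\ge 2}\ell_i(r)$ and $\sum_{i\ge2}\cO_i(r)$ match $P_r(\cT')$ and $C_r(\cT')$ by the induction hypothesis; and $P_r(\cT) = \ell_1(r) + P_r(\cT')$, $C_r(\cT) = \cO_1(r) + C_r(\cT')$ by the recursive definition of the migrant children in Section~\ref{sec:treeofisles2}.

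The main obstacle I anticipate is the careful translation between the walk's excursion decomposition and the tree's pruning recursion — specifically verifying that the times $\s_i(r), \vs_i(r)$ defined purely in terms of the walk really do line up with the boundaries of ``exploring a migrant subtree'' versus ``exploring the pruned tree'', and that the walk genuinely restarts as a death-first exploration process at each such boundary. This hinges on the Markovian rule $\cR$ being designed so that a child of a migrant (in $\ell^+$) is always explored with top priority, then migrants themselves (in $\ell^0$), which is exactly what guarantees that migrant subtrees are handled ``depth-first among colonies'' and the walk's level-$(r-1)$ crossings are clean; I would state this alignment as an explicit sublemma and prove it by induction on the number of events, leaning on the figures (Figures~\ref{fig:cont_tree} and~\ref{fig:cont_expl_proc}) for intuition but spelling out the event-time correspondence in prose.
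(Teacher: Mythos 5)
Your proposal follows essentially the same route as the paper's proof: the death-first identity that $1+S_i$ is the number of individuals alive between consecutive events, the identification of each overshoot above $r-1$ with a batch of migrant children, the observation (via the first part of Lemma \ref{lem:explproc}) that the migrant subtrees are explored as an excursion of the walk above $r-1$ which returns to $r-1$ exactly when the batch is exhausted — hence contributing nothing to the $\ell_i$'s and producing no new overshoot — followed by pruning and induction on the size of the tree. Two small slips to correct in the write-up: in this model $C_r(\cT)=0$ does \emph{not} imply $s(\cT)\le r$ (it only implies the walk never exceeds $r-1$ before $\vsinf$, which is all your base case actually uses), and if $\cT'$ denotes $\cT$ with the first batch of migrant subtrees removed then $P_r(\cT')=P_r(\cT)$ rather than $P_r(\cT)-\ell_1(r)$, so the inductive bookkeeping should be phrased either in terms of the still-unexplored forest below the current line, or, as in the paper, by excising the excursion above $r-1$ and resuming the exploration of the pruned tree.
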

\noindent In words, $P_r(\cT)$ is the time spent by $(S_n)$ under $r-1$, and $C_r(\cT)$ the sum of the overshoots above level $r-1$, both before hitting $-1$. The quantities involved are easy to follow on Figures \ref{fig:cont_tree} and \ref{fig:cont_expl_proc}, and carefully following in parallel the walk and the tree on these examples should make the following proof quite obvious.
\begin{proof}
As explained above, our labeling algorithm works as follows. As long as there are no migrant children, it is just the death-first search algorithm. As we mentioned in Section \ref{sec:deathfirst}, $1 + S_i$ is precisely the number of individuals alive between the $(i-1)$-th and the $i$-th event. Hence, the first time $i$ that $(1 + S_n)$ is greater than $r$ is the first time when more than $r$ people coexist. The supernumerary $1 + S_i - r$ migrate, and this quantity is precisely the first overshoot $\cO_1(r)$, whereas the $i = \ell_1(r)$ first individuals visited remain on the initial island.

Then, we modify our algorithm to explore (by death-first search) the $\cO_1(r)$ subtrees of the migrant children. According to the first part of Lemma \ref{lem:explproc}, the first exploration goes from $1 + S_i$ to $1 + S_i - 1$ while remaining above $1 + S_i - 1$, the second from $1 + S_i - 1$ to $1 + S_i - 2$ and remains above $1 + S_i - 2$, \dots, and the $\cO_1(r)$-th and last from $1 + S_i - \cO_i(r) + 1 = r$ to $1 + S_i - \cO_i(r) = r - 1$ while remaining above $r-1$. Hence, when all these explorations are done, no new overshoot has been observed, and no time below $r$ has been cumulated. These subtrees have thus all been visited, and we may just as well cut them off.

The exploration of the initial tree then resumes, and the same argument can then be applied to the pruned tree, so we can thus conclude by a simple induction on the size of the tree.
\end{proof}

From now on, we replace $\cT$ by $\T$, a Galton-Watson tree with reproduction law $\rho$ and construct its tree of isles $\A(r)$ for $r$ resources. We also replace $(S_n)$ by an actual random walk defined on the whole of $\Z^+$, with step distribution $\trho$, so Lemma \ref{lem:popcolRW2} can be rewritten as an equality in law. Unlike the first model, an island with colonies can have an arbitrarily large population. Letting $\pi_r$ to be the law of the pair $(P_r(\T),C_r(\T))$, we cannot anymore write $\pi_r$ as (roughly) a product measure. It should be clear than an analogue to Lemmas \ref{lem:treeGW1} and \ref{lem:procGW1} holds. We call a $\N$-type Galton-Watson tree (resp. process) a multitype Galton-Watson tree (resp. process) with types in $\N$, and we still adopt the notation of Section \ref{sec:empmeas}.

\begin{lemma} \label{lem:GW2}
The tree of isles $\A(r)$ is a $\N$-type Galton-Watson tree, such that the population and number of colonies of each island has law $\pi_r$. Similarly, the process $(Z_{i,p}(r),\, p \geq 1)_{i \geq 0}$ is a $\N$-type Galton-Watson process, such that the type and number of children of each individual has law $\pi_r$. 
\end{lemma}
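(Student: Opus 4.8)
The plan is to mimic exactly the argument used for Lemma~\ref{lem:treeGW1} in the first model, replacing the ``breadth-first search'' labeling there by the modified death-first search rule $\cR$ introduced above, whose Markovian character has already been verified. The key point, as in Section~\ref{sec:treeofisles1}, is that the construction of the tree of isles is done through the very same labeling algorithm attached to $\cR$, so that the branching structure of $\T$ propagates to $\A(r)$ through the second part of Lemma~\ref{lem:explproc}.

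First I would look at the algorithm of Section~\ref{sec:treeofisles} defining $\cA(r)$ (hence $\A(r)$ when we plug in $\T$). The root island is assigned population $P_r(\T)$ and $C_r(\T)$ colonies, and by Lemma~\ref{lem:popcolRW2} these are read off the exploration process $(S_n)$ associated to the rule $\cR$; since $\T$ is a Galton-Watson tree with reproduction law $\rho$, Lemma~\ref{lem:explproc} guarantees that $(S_n)$ is a random walk with step law $\trho$ absorbed at $-1$, and so the pair $(P_r(\T),C_r(\T))$ has some law, which we call $\pi_r$. Next, the migrant children produced by the pruning procedure are, by construction, a \emph{stopping line} in the sense of Lemma~\ref{lem:explproc}: the rule $\cR$ only ever inspects $\cT_\ell$, and the set of migrant individuals at each stage is determined by $\cT_\ell$ alone. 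Therefore, conditionally on the number $C_r(\T)$ of migrant children, the subtrees $\T_{\mathrm{migr}}^1,\dots,\T_{\mathrm{migr}}^{C_r(\T)}$ are i.i.d.\ copies of $\T$, independent of everything explored so far (in particular of $P_r(\T)$). A straightforward induction on the recursion step in the construction of $\A(r)$ then shows that $\A(r)$ is an $\N$-type Galton-Watson tree in which every island, independently given its type, has population and number of colonies distributed as $\pi_r$, each colony being launched afresh with the same law. The statement about the process $(Z_{i,p}(r),\,p\ge 1)_{i\ge0}$ is then the immediate ``count vertices generation by generation'' corollary of the tree statement, exactly as Lemma~\ref{lem:procGW1} was deduced from Lemma~\ref{lem:treeGW1}.

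I do not expect any genuine obstacle here: the only substantive input is that the set of migrant children forms a stopping line for the labeling induced by $\cR$, which is already contained in the proof of Lemma~\ref{lem:explproc} together with the fact (noted when $\cR$ was defined) that $\cR$ is a Markovian rule. The mild subtlety, relative to the first model, is that here $P_r(\T)$ and $C_r(\T)$ are genuinely correlated — an island with colonies may have arbitrarily large population — so $\pi_r$ cannot be disintegrated as a $\pi_r\otimes\g_r$-type product as in Lemma~\ref{lem:treeGW1}; but this only means we keep $\pi_r$ as a joint law on $\N\times\Z^+$ and changes nothing in the branching argument. The remaining verifications (that the recursion in Section~\ref{sec:treeofisles} terminates, since $\T$ is a.s.\ finite, and that distinct migrant subtrees are disjoint, as noted in the footnote there) are routine.
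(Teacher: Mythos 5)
Your proposal is correct and is exactly the argument the paper intends: the paper in fact gives no separate proof of Lemma \ref{lem:GW2}, stating only that the analogue of Lemmas \ref{lem:treeGW1} and \ref{lem:procGW1} ``should be clear'', and the intended justification is precisely what you write, namely that the migrant children form a stopping line (the rule and the pruning depend only on $\cT_\ell$), so the branching property of Lemma \ref{lem:explproc} makes the migrant subtrees i.i.d.\ copies of $\T$ conditionally on the explored part, and induction plus the generation-by-generation count give both statements, with the only change from the first model being that $\pi_r$ is kept as a joint law on the pair $(P_r(\T),C_r(\T))$.
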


\subsection{Result}

As in Section \ref{sec:empmeas}, we now start from $N$ independent islands and $r_N$ resources, and employ the same notation. Let us first define, for a real function $X$ and a real $x$, $\tau_x(X)$ to be the hitting time of $(x,\pinf)$ by $X$, i.e.
\[
\tau_x(X) = \inf \{ t \geq 0, X_t > x \}, \quad \inf \emp = \pinf.
\]
Let $C_N = C_{r_N}(\T)$ and $P_N = P_{r_N}(\T)$. From Lemma \ref{lem:popcolRW2}, the probability that an island has colonies is
\begin{equation} \label{eq:probacol}
\P(C_N > 0) = \P(\tau_{r_N-1}(S) < \vsinf) \sim  \frac{1}{r_N}.
\end{equation}
The last equivalent is an extension of the classical gambler's ruin estimates, and follows e.g. from Theorem 2, p. 18, in \cite{Takacs} and a Tauberian theorem (p.203, 204 of \cite{Takacs}). We shall thus assume that, for some $c > 0$,
\begin{equation} \label{eq:hyprN2}
\lim_{N \to \pinf} \frac{r_N}{N} = c > 0.
\end{equation}
Beware that, as we mentioned, this is a different rescaling from \eqref{eq:hyprN1} for the first model. We shall prove the following, where $\tc := c / \s$ and we recall that $\mu$ is a measure on $(0,\pinf)$ with density $1/(2 x^{3/2})$.

\begin{thm} \label{th:conv2}
Under the assumption \eqref{eq:hyprN2}, the sequence $(\PN_N(r_N))$ converges in distribution in $\cM^+$ to a random measure $\cP$, characterized by a cumulant $\k(f)$, unique solution to
\begin{equation} \label{eq:cumu2}
\exp - \k(f) = \l \int_0^{\pinf} \left ( 1 - e^{-f(x)} \right ) \mu(\ddx) + \frac1c \E \left ( 1 - e^{-f(P) - \k(f) C} \right )
\end{equation}
for every $f \in C_K$, where $(P,C)$ is a couple of random variables with Laplace transform
\begin{equation} \label{eq:lapltransPC}
\E(\exp - \a P - \b C) = \left ( \frac{\sqrt{2 \a} \tc}{\sinh \sqrt{2 \a} \tc} \right )^2 \frac{1}{\b c + \sqrt{2 \a} \tc \coth \sqrt{2 \a} \tc}.
\end{equation}
\end{thm}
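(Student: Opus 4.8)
My plan mirrors the four-step scheme used for Theorem~\ref{th:conv1}: tightness, a cumulant equation from the branching property, passage to the limit, and uniqueness; the real work is the joint asymptotics of $(P_N,C_N)$. Tightness of $(\PN_N(r_N))$ is Lemma~\ref{lem:tightness} unchanged, since it only uses that the total mass is the rescaled total population of a critical Galton--Watson forest. For the cumulant equation, Lemma~\ref{lem:GW2} gives that $\A(r_N)$ is an $\N$-type Galton--Watson tree in which the pair (population, number of colonies) of every island has the law of $(P_N,C_N)$ and colonies reproduce independently; conditioning on $(P_N,C_N)$ exactly as in Lemma~\ref{lem:eqcumu} yields
\begin{equation*}
\exp-\k_N(f)=\E\left(\exp-\left(f\left(\frac{P_N}{N^2}\right)+\frac{C_N}{N}\,\k_N(f)\right)\right)^{N},\qquad f\in C_K.
\end{equation*}

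To pass to the limit I would split $N\E(1-e^{-f(P_N/N^2)-\k_N(f)C_N/N})$ according to $C_N=0$ or $C_N>0$. On $\{C_N=0\}$ no migration occurs, so $P_N=\vsinf$, and $N\E((1-e^{-f(P_N/N^2)})\un_{C_N=0})\to\l\int_0^{\pinf}(1-e^{-f(x)})\mu(\ddx)$ by the tail estimate $\P(\vsinf>t)\sim\K/\sqrt t$ and $\P(C_N>0)\to0$, as in the proof of Lemma~\ref{lem:convmeas}. On $\{C_N>0\}$ the term is $N\P(C_N>0)\,\E(1-e^{-f(P_N/N^2)-\k_N(f)C_N/N}\mid C_N>0)$; by \eqref{eq:probacol} and \eqref{eq:hyprN2}, $N\P(C_N>0)\to1/c$, and the key input below is that, conditionally on $\{C_N>0\}$, $(P_N/N^2,C_N/N)\convlaw(P,C)$ with the Laplace transform \eqref{eq:lapltransPC}; since the relevant integrands are bounded and Lipschitz and $\k_N(f)\to\k(f)$ along a subsequence, this contributes $\frac1c\E(1-e^{-f(P)-\k(f)C})$. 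Hence every subsequential limit $\k(f)$ solves \eqref{eq:cumu2}. Uniqueness follows as in Proposition~\ref{prop:limcumu}: with $x=e^{-\k(f)}\in(0,1]$, \eqref{eq:cumu2} reads $x=A-\frac1c\E(e^{-f(P)}x^{C})$ for a constant $A$, and $x\mapsto x+\frac1c\E(e^{-f(P)}x^{C})-A$ has derivative $1+\frac1c\E(Ce^{-f(P)}x^{C-1})>0$, so there is at most one root; with tightness this gives convergence.

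The heart of the argument is the convergence $(P_N/N^2,C_N/N)\mid\{C_N>0\}\convlaw(P,C)$. By Lemma~\ref{lem:popcolRW2}, $P_N$ is the time the walk $(S_n)$ spends below $r_N-1$ and $C_N$ the sum of its overshoots above $r_N-1$, both before the hitting time $\vsinf$ of $-1$, and $\{C_N>0\}=\{\tau_{r_N-1}(S)<\vsinf\}$. I would cut the conditioned trajectory at $\tau_{r_N-1}(S)$. On the ascent $[0,\tau_{r_N-1}(S))$ the walk is a critical finite-variance walk conditioned to reach level $r_N$ before $-1$; via the Doob transform by the harmonic function $x\mapsto x+1$ and the classical invariance principle for such conditioned walks, $S_{\lfloor N^2t\rfloor}/N\to\s R_t$ with $R$ a $\mathrm{BES}(3)$, run until it hits $c$, so $\ell_1(r_N)/N^2=\tau_{r_N-1}(S)/N^2\to T_1$, the hitting time of $\tc=c/\s$ by $R$, with $\E(e^{-\a T_1})=\frac{\tc\sqrt{2\a}}{\sinh(\tc\sqrt{2\a})}$, while the first overshoot $\cO_1(r_N)$ stays bounded in probability and vanishes after division by $N$. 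By the strong Markov property the walk after $\tau_{r_N-1}(S)$ is an unconditioned critical walk from $\approx r_N$ killed at $-1$; the part of it lying below $r_N-1$, read in its own time, behaves like $\s$ times a Brownian motion reflected below $c$ and killed at $0$, so $\sum_{i\ge2}\ell_i(r_N)/N^2\to T_2$, the lifetime of that reflected killed process started from $c$, equivalently the exit time of $(-\tc,\tc)$ by a standard Brownian motion from $0$; and the overshoots $\cO_i(r_N)$, $i\ge2$, are asymptotically i.i.d. with the renewal overshoot law of the walk at a high level, so a law of large numbers gives $C_N/N\to\s L$ with $L$ a suitably normalised local time at $c$ of the reflected process. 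Since ascent and descent are independent, $\E(e^{-\a P-\b C})=\E(e^{-\a T_1})\,\E(e^{-\a T_2-\b\s L})$; the second factor is a Feynman--Kac quantity, obtained by solving $\frac12u''=\a u$ on $(-\tc,0)\cup(0,\tc)$ with $u(\pm\tc)=1$ and the jump condition $\frac12(u'(0^+)-u'(0^-))=\b\s\,u(0)$, giving $u(0)=\frac{\tc\sqrt{2\a}}{\sinh(\tc\sqrt{2\a})\,(\b c+\tc\sqrt{2\a}\coth(\tc\sqrt{2\a}))}$; multiplying by $\E(e^{-\a T_1})$ produces exactly \eqref{eq:lapltransPC}.

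The main obstacle is this last step, the descent analysis. Two points need care. First, the joint convergence of the time spent below $r_N-1$ and of $\sum\cO_i(r_N)$ to $(T_2,\s L)$: the overshoots are governed by the $O(1)$ behaviour of the walk near $r_N-1$ whereas $T_2$ and $L$ are macroscopic ($N$-scale) quantities, so decoupling them requires a renewal-type argument showing that, conditionally on the macroscopic path, the successive overshoots are asymptotically i.i.d., after which a law of large numbers turns their partial sums into a constant times the number of upcrossings of $r_N-1$, itself asymptotic to $N$ times the local time. Second, the bookkeeping of constants, so that the factor in front of the local time is precisely $\s$ and the transform collapses to \eqref{eq:lapltransPC}; this hinges on the exact relation between mean overshoot, upcrossing rate and local-time normalisation, and is where the classical random-walk asymptotics in the spirit of \cite{Takacs,Spitzer} and the Brownian local-time computations advertised in the abstract enter. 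The invariance principles invoked (a critical finite-variance walk conditioned to stay positive, or to reach a high level, converging to $\mathrm{BES}(3)$; a killed critical walk observed below a level converging to reflected Brownian motion) are classical but should be cited carefully.
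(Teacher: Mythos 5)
Your overall scheme (tightness, the cumulant equation from Lemma \ref{lem:GW2} and the branching property, splitting $N\E(1-e^{-f(P_N/N^2)-\k_N(f)C_N/N})$ on $\{C_N=0\}$ versus $\{C_N>0\}$, and uniqueness by monotonicity in $x=e^{-\k(f)}$) is exactly the paper's, and those steps are handled correctly. Where you genuinely diverge is the core input, the limit law of $(P_N/N^2,C_N/N)$ given $C_N>0$. The paper keeps the whole conditioned trajectory: it extracts, by a continuous mapping, the first excursion of the rescaled walk exceeding $\tc$, quotes Perkins' theorem for the \emph{joint} convergence of the rescaled walk and its cumulated overshoots at level $r_N$ to a Brownian motion and half its local time, obtains $(P,C)=\bigl(\int_0^{\zeta(\bfe)}\unn{\bfe_s<\tc}\ds,\tfrac{\s}{2}\ell^{\tc}_{\infty}(\bfe)\bigr)$ with $\bfe$ of law $\n^{>\tc}$, and then computes the Laplace transform via Williams' decomposition, a Ray--Knight theorem and the Pitman--Yor formula for squared Bessel bridges. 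You instead cut at the first passage of $r_N-1$: an $h$-transform/BES(3) limit for the ascent, then a folding of the killed walk observed below the level into an exit problem for Brownian motion on $(-\tc,\tc)$ with an elastic (local-time) condition at $0$, solved by Feynman--Kac. This is legitimate and your algebra checks out: the jump condition $\tfrac12(u'(0^+)-u'(0^-))=\b\s\,u(0)$ gives $u(0)=\sqrt{2\a}\,/\,\bigl(\sqrt{2\a}\cosh(\sqrt{2\a}\tc)+\b\s\sinh(\sqrt{2\a}\tc)\bigr)$, and multiplying by $\sqrt{2\a}\tc/\sinh(\sqrt{2\a}\tc)$ does reproduce \eqref{eq:lapltransPC}; your route is in fact very close to the paper's own ``Another way to the result'' section, which likewise splits ascent/middle/descent and treats the middle by excursion theory plus a law of large numbers on overshoots. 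What your plan leaves as a sketch is precisely the step the paper's main proof outsources to Perkins: the joint convergence of the macroscopic occupation time below $r_N-1$ and of the $O(1)$-scale overshoot sums, with the correct constant in front of the local time (your renewal/LLN heuristic, with mean overshoot $\s^2/2$ and the gambler's-ruin upcrossing count of order $N\,\cE(\s^2/(2c))$, is the right bookkeeping and is what fixes the factor $\b\s$). To turn the proposal into a proof you must either carry out that decoupling argument in detail, as the paper only does informally in its alternative section, or simply invoke Perkins' joint invariance principle together with the continuity of the excision/occupation maps, which is what the paper's main proof does; modulo that, and careful citation of the conditioned-walk invariance principles you use, your argument is complete and buys a more elementary derivation that exhibits directly the law of $P$ given $C$.
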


We shall follow the same route as for Theorem \ref{th:conv1}: write an equation for the cumulant of $\PN(r_N)$, study its convergence, and prove that it has a unique solution. Obviously, we also need to prove the tightness, but it is obtained as for Lemma \ref{lem:tightness}.

\subsection{Interpretation} \label{sec:interpretation}

As for the first model, let us try to give a reasonable interpretation of the limiting measure $\cP$. First, let us study $(P,C)$. To this end, notice that we may rewrite
\[
\E(\exp - \a P - \b C) = \int_0^{\pinf} \frac{1}{c} e^{-x/c} e^{- \b x} \left ( \frac{\sqrt{2 \a} \tc}{\sinh \sqrt{2 \a} \tc} \right )^2  \exp - x \frac{1}{c} \left ( \frac{\sqrt{2 \a} \tc}{\tanh \sqrt{2 \a} \tc} - 1 \right ) \dx.
\]
This makes clear the following facts.
\begin{itemize}
	\item $C$ has an exponential $\cE(1/c)$ law (surprisingly, this quantity does not depend on the second moment of $\rho$).
	\item Conditionally on $C$, $P$ has the law of the sum of three independent variables: 
		\begin{itemize}
		\item two whose law has Laplace transform
		\[
		\frac{\sqrt{2 \a} \tc}{\sinh \sqrt{2 \a} \tc};
		\]
		\item a third one whose conditional Laplace Transform is
		\[
		\exp - C \frac{1}{c} \left ( \frac{\sqrt{2 \a} \tc}{\tanh \sqrt{2 \a} \tc} - 1 \right ).
		\]
		\end{itemize}
\end{itemize}

Note, and the reason for its appearance will be clear in the proofs, that $\sqrt{2 \a} \tc / \sinh \sqrt{2 \a} \tc$ is the Laplace transform of the law of the hitting time of $\tc$ by a Bessel 3 process. This distribution has a complicated density given by Theta functions, and we shall thus not dwell on this matter.

Let us then construct a random measure $\eta$ as follows. We first build a tree $T$ with fertilities, which is nothing else than a Galton-Watson tree with types in $\R^+$. Each individual of fertility $\f$ has a number of descendants distributed as a Poisson distribution with parameter $\f$. Each of these children chooses independently a fertility which is exponential $\cE(1/c)$. Conditionally on its fertility $\f$, we attach to each individual a population $P(\f)$ distributed as a variable with Laplace transform
\[
\E(\exp - \a P(\f)) = \left ( \frac{\sqrt{2 \a} \tc}{\sinh \sqrt{2 \a} \tc} \right )^2  \exp - \f \frac{1}{c} \left ( \frac{\sqrt{2 \a} \tc}{\tanh \sqrt{2 \a} \tc} - 1 \right ).
\]
We start from a (virtual) individual of fertility $1$ and population, say, 1. We have then constructed a tree $T$, to each vertex $v$ of which a fertility $\f_v$ and a population $P_v$ is attached. One readily checks that the reproduction law is critical and not $\dl_1$, so this tree is almost surely finite.

Then, consider $(\eta_u(\f), \f \geq 0)_{u \in \cU}$ a family of independent variables, such that $\eta_u(\f)$ is a Poisson random measure with intensity $\f \l \mu$. Define
\[
\eta = \sum_{u \in T} (\eta_u(\f_u) + \dl_{P_u}) - \dl_1.
\]
Subtracting $\dl_1$ just means that we do not take into account the population of the virtual initial individual. The reader will check, as for the first model, that the cumulant of $\eta$ solves the same equation \eqref{eq:cumu2} as the cumulant of $\cP$, so that these two measures have the same law.

\subsection{Equation of the cumulant}

Let us first notice that, as for the first model, the cumulant $\k_N(f)$ of $\PN(r_N)$, given by
\[
\k_N(f) = - \ln \E \left ( \exp - \la \PN_N(r_N) , f \ra \right ) = - N \ln \E \left ( \exp - \la \PN(r_N) , f \ra \right ),
\]
for $f \in C_K$, also solves Equation \eqref{eq:cumuN}. The proof is identical.

\begin{lemma} \label{lem:eqcumu2}
The cumulant $\k_N(f)$ solves the following equation
\begin{equation}\label{eq:cumuN2}
\exp - \k_N(f) = \E \left ( \exp - \left ( f \left ( \frac{P_N}{N^2} \right ) + \frac{C_N}{N} \k_N(f) \right ) \right )^N
\end{equation}
for every $f \in C_K$.
\end{lemma}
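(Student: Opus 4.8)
The plan is to repeat essentially verbatim the argument of Lemma~\ref{lem:eqcumu}, the only change being that Lemma~\ref{lem:GW2} now plays the role of Lemma~\ref{lem:treeGW1}: the types (populations) of the tree of isles range over all of $\N$ rather than $\{1,\dots,r_N\}$, and the joint law $\pi_r$ of $(P_{r}(\T),C_{r}(\T))$ no longer factorizes. Neither of these features is used in the derivation of the cumulant equation, so the computation goes through unchanged. The whole content of the proof is the branching decomposition of $\PN(r_N)$, which is exactly what Lemma~\ref{lem:GW2} packages.

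Concretely, first I would record that $\PN_N(r_N)$ is the sum of $N$ i.i.d.\ copies of $\PN(r_N)$, so that
\[
\exp - \k_N(f) = \E\left(\exp - \la \PN(r_N), f\ra\right)^N
\qquad\text{and hence}\qquad
\E\left(\exp - \la \PN(r_N), f\ra\right) = \exp\left(-\tfrac1N\,\k_N(f)\right).
\]
Then, using the representation $\PN(r_N) = \sum_{p\geq 1}\dl_{p/N^2}\sum_{i\geq 0} Z_{i,p}(r_N)$ together with Lemma~\ref{lem:GW2}, I would condition on the pair $(P_N,C_N)$. The generation-$0$ part of the measure is $\dl_{P_N/N^2}$ (that is, $Z_{0,p}(r_N)=\unn{P_N=p}$), contributing the factor $\exp(-f(P_N/N^2))$; and, by the branching property of the tree of isles, conditionally on $(P_N,C_N)$ the part of $\PN(r_N)$ carried by generations $\geq 1$ is independent of the root and distributed as the sum of $C_N$ independent copies of $\PN(r_N)$, hence contributes $\E(\exp-\la\PN(r_N),f\ra)^{C_N} = \exp(-\tfrac{C_N}{N}\k_N(f))$. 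Multiplying the two factors, taking expectation and raising to the power $N$ gives exactly \eqref{eq:cumuN2} — the same equation \eqref{eq:cumuN} as in the first model.

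No integrability discussion is needed: since $\rho$ is critical with $0<\s^2<\pinf$, the tree $\T$ is a.s.\ finite, hence so is $\A(r_N)$, so $\PN(r_N)$ is a.s.\ a finite measure; as $f\in C_K$ has compact support in $(0,\pinf)$, all the sums involved are a.s.\ finite and $\k_N(f)\in[0,\pinf)$. The only formal difference from the first model is that the inner sum over $p$ now runs over all of $\N$, but this is harmless for the same reason (only finitely many $p$ contribute). I do not anticipate any real obstacle here; the single substantive step, the conditional-independence/branching decomposition of $\PN(r_N)$, is routine once Lemma~\ref{lem:GW2} is in hand.
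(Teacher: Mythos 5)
Your proposal is correct and follows exactly the paper's route: the paper itself proves Lemma \ref{lem:eqcumu2} by observing that the argument of Lemma \ref{lem:eqcumu} goes through verbatim, with the branching property of the $\N$-type tree of isles (Lemma \ref{lem:GW2}) replacing Lemma \ref{lem:treeGW1}. Your remarks that neither the finite type space nor the factorization of the law of $(P_N,C_N)$ is actually used, and that finiteness of all sums is automatic since $\T$ is a.s.\ finite and $f \in C_K$, are precisely why the identical computation applies.
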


We now assume that $\k_N(f) \to \k(f)$. Once again, the RHS of \eqref{eq:cumuN2} has the same limit, if any, as
\begin{align*}
N \E \left ( 1 - \exp - \left ( f \left ( \frac{P_N}{N^2} \right ) \right. \right. & \left. \left. + \frac{C_N}{N} \k_N(f) \right ) \right ) \\
& = N \E \left ( \left ( 1 - \exp - f \left ( \frac{P_N}{N^2} \right ) \right ) \unn{C_N = 0} \right ) \\
& \quad + N \P(C_N > 0) \E \left ( \left. 1 - \exp - \left ( f \left ( \frac{P_N}{N^2} \right ) + \frac{C_N}{N} \k_N(f) \right ) \right \vert C_N > 0 \right ).
\end{align*}
The first part is easy to deal with. Note indeed that by the Cauchy-Schwarz inequality,
\[
N \E \left ( \left ( 1 - \exp - f \left ( \frac{P_N}{N^2} \right ) \right ) \unn{C_N = 0} \right )^2 \leq N \P(C_N = 0) \E \left ( \left ( 1 - \exp - f \left ( \frac{P_N}{N^2} \right ) \right )^2 \right ).
\]
By \eqref{eq:probacol} and \eqref{eq:hyprN2}, the first term in the RHS is bounded. By Lemma \ref{lem:popcolRW2}, $P_N \leq \vsinf$, so $P_N / N^2 \to 0$ a.s. and the second term thus tends to 0 by dominated convergence. Hence
\[
N \E \left ( \left ( 1 - \exp - f \left ( \frac{P_N}{N^2} \right ) \right )\unn{C_N = 0} \right ) \sim N \E \left ( 1 - \exp - f \left ( \frac{P_N}{N^2} \right ) \right ) \to \int_0^{\pinf} (1-e^{-f(x)}) \mu(\ddx)
\]
as for Lemma \ref{lem:convmeas}. On the second hand, note that by \eqref{eq:probacol} and \eqref{eq:hyprN2}
\begin{align*}
& N \P(C_N > 0) \E \left ( \left. 1 - \exp - \left ( f \left ( \frac{P_N}{N^2} \right ) + \frac{C_N}{N} \k_N(f) \right ) \right \vert C_N > 0 \right ) \\
& \sim \frac1c \E \left ( \left. 1 - \exp - \left ( f \left ( \frac{P_N}{N^2} \right ) - \frac{C_N}{N} \k_N(f) \right )\right \vert C_N > 0 \right ).
\end{align*}
We are thus led to compute the weak limit of the couple of random variables $(P_N/N^2,C_N/N)$ knowing that $C_N > 0$. This is more involved than for our first model, and this will be the goal of the next section.

\subsection{Population and number of colonies of a fertile island}

We shall now give a formula for the population and number of colonies of a fertile island, by proving the following result.

\begin{prop} \label{prop:convPNCN}
Conditionally on $\{ C_N > 0 \}$, the variable $(P_N/N^2,C_N/N)$ converges in law to a variable $(P,C)$ whose Laplace transform is given by \eqref{eq:lapltransPC}.
\end{prop}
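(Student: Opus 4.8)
The plan is to compute the joint Laplace transform $\E(\exp(-\a P_N/N^2 - \b C_N/N)\mid C_N>0)$ directly from the random-walk representation of Lemma \ref{lem:popcolRW2} and pass to the limit, identifying the Brownian scaling limit of the walk. Write $P_N = \sum_i \ell_i(r_N)$ and $C_N = \sum_i \cO_i(r_N)$, where the walk $(S_n)$ alternates between ``low'' stretches below level $r_N-1$ of total length $\sum_i\ell_i$, and ``high'' excursions above $r_N-1$. The key structural observation is the strong Markov property at the stopping times $\s_i(r_N)$ and $\vs_i(r_N)$: conditionally on the past, each low stretch $[\s_i,\vs_i)$ is an independent copy of the walk run from $r_N-1$ (killed at $-1$) up to its first passage above $r_N-1$, and each overshoot $\cO_i$ is an i.i.d. draw from the overshoot-at-first-passage law. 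Moreover the number of alternations is geometric, governed by the probability $q_N$ that the walk started from $r_N-1$ (equivalently from $0$, since we only move until returning to $r_N-1$) hits $-1$ before climbing above $r_N-1$ again; by the gambler's-ruin estimate \eqref{eq:probacol} one has $1-q_N \sim 1/r_N \sim 1/(cN)$. Thus conditionally on $\{C_N>0\}$ — i.e. on at least one high excursion occurring — the pair $(P_N,C_N)$ decomposes as: a first low stretch conditioned to reach $[r_N,\pinf)$ (length $L_0^N$), plus a $\mathrm{Geom}$ number $G_N$ of (overshoot, low-stretch) pairs, plus a final low stretch conditioned to hit $-1$ before climbing (length $L_\infty^N$).

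Next I would compute the Brownian limits of each ingredient after the scaling $n\mapsto n/N^2$, $S\mapsto S/N$ with $r_N/N\to c$, writing $\tc=c/\s$. Because $\trho$ is centered with variance $\s^2$, Donsker gives $S_{\lfloor N^2 t\rfloor}/N \to \s B_t$. (i) The overshoot $\cO_i/N$ over a level at ``height of order $N$'' is $o(N)$ in the diffusive regime, hence negligible individually; but there are $\asymp N$ of them, so $C_N/N = \sum_i \cO_i/N$ survives in the limit — in fact $C_N$ should converge to an exponential. This is most cleanly seen by the following: $C_N>0$ and each further excursion continues with probability $q_N' $ (the chance of yet another up-crossing) with $1-q_N' \asymp 1/N$, so $G_N/N$ converges to an exponential $\cE(\lambda_0)$ for an explicit constant, and the $\cO_i$ are uniformly tight, giving $C_N/N\to \cE(1/c)$ as claimed. (ii) A low stretch conditioned to reach level $r_N-1$, rescaled, converges to (twice) the time a Brownian motion started at $\tc$, conditioned to hit $\tc$ before $0$... more precisely one recognizes here the hitting time of $\tc$ by a BES(3) process: the walk killed at $-1$ and conditioned to exit above is, in the limit, Brownian motion from $0$ to $\tc$ conditioned to stay positive, i.e. a Bessel(3) bridge/meander, whose length has Laplace transform $\sqrt{2\a}\,\tc/\sinh(\sqrt{2\a}\,\tc)$. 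This is exactly the factor appearing (squared, for the two endpoints $L_0^N$ and $L_\infty^N$) in \eqref{eq:lapltransPC}. (iii) Each intermediate low stretch, conditioned to return to $r_N-1$ before $-1$, has rescaled length contributing to the third factor; summing a $\mathrm{Geom}$ number of them against the exponential governing $C$ produces the conditional Laplace transform $\exp(-C\,c^{-1}(\sqrt{2\a}\,\tc\coth\sqrt{2\a}\,\tc - 1))$ of the interpretation section, equivalently the denominator $\b c + \sqrt{2\a}\,\tc\coth\sqrt{2\a}\,\tc$ in \eqref{eq:lapltransPC}.

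Concretely, the cleanest execution is: let $\a,\b\ge0$ and set $u_N=\E(\exp(-\a L^N/N^2))$ for $L^N$ a single ``generic'' low stretch (from $r_N-1$, killed at $-1$, stopped at next up-crossing), $v_N=\E(\exp(-\b\cO/N))$ for a generic overshoot, and condition on the number of excursions. Summing the geometric series gives
\[
\E\!\left(e^{-\a P_N/N^2-\b C_N/N}\,\middle|\,C_N>0\right)
= \frac{u_N^{\,\text{(reach)}}\; u_N^{\,\text{(final)}}\;(1-q_N)\,v_N}{1 - q_N\,u_N\,v_N}
\]
up to the precise bookkeeping of which stretch is conditioned how; then $1-q_N\sim 1/(cN)$, $u_N\to 1$, $v_N\to 1$, but $N(1-q_N u_N v_N)$ converges to the nontrivial limit $\tfrac1c(\b c + \sqrt{2\a}\,\tc\coth\sqrt{2\a}\,\tc)$, and the two conditioned stretches each contribute $\sqrt{2\a}\,\tc/\sinh\sqrt{2\a}\,\tc$, yielding \eqref{eq:lapltransPC} after simplification.

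The main obstacle I anticipate is step (ii)/(iii): making rigorous the claim that a centered random walk killed at $-1$ and conditioned on the relevant first-passage event, rescaled diffusively with the level at height $\sim cN$, converges to the stated Bessel(3)-type object, and that the hitting-time Laplace transform is $\sqrt{2\a}\tc/\sinh\sqrt{2\a}\tc$. This requires either a conditioned-walk invariance principle (in the spirit of \cite{Bolt} / Bertoin–Doney type results for walks conditioned to stay positive) together with uniform integrability to pass the Laplace transform through the weak limit, or a direct generating-function computation at the discrete level (Takács-style, cf. \cite{Takacs}) followed by a Tauberian/continuity argument as already invoked for \eqref{eq:probacol}. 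The overshoot contributions and the uniformity of the geometric-parameter estimate $1-q_N\sim 1/(cN)$ also need care, but these are routine extensions of classical gambler's-ruin asymptotics. Everything else is bookkeeping of the geometric decomposition.
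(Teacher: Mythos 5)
Your route is not the paper's official proof: there, Proposition \ref{prop:convPNCN} is obtained by (i) an invariance principle for the walk \emph{jointly with its overshoot/local-time process} (Perkins' theorem), plus continuity of the excursion-extraction maps $e_{\tc},\phi_{\tc}$, which gives convergence of $(P_N/N^2,C_N/N)$ to $\bigl(\int_0^{\zeta(\bfe)}\unn{\bfe_s<\tc}\ds,\tfrac{\s}{2}\ell^{\tc}_\infty(\bfe)\bigr)$ for an excursion conditioned to exceed $\tc$, and then (ii) an exact computation of that functional's Laplace transform via Williams' decomposition, Brownian scaling, a Ray--Knight theorem and the Pitman--Yor formula for squared Bessel bridges. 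What you propose is instead the path-decomposition at up- and down-crossings of $r_N$, which is essentially the paper's own ``another way to the result'' (Section 5.7), presented there as the more elementary but less rigorously executed alternative. So the plan is viable in principle, and it has the advantage you note of exhibiting directly the conditional law of $P$ given $C$; the official proof's advantage is that it never needs delicate discrete renewal constants, since everything is pushed to the Brownian limit first.

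Two concrete problems remain in your write-up. First, the per-cycle termination probability is \emph{not} given by \eqref{eq:probacol}: that estimate concerns the walk started at $0$ (where the constant is $1$ because the walk is skip-free downwards and starts one unit above $-1$), whereas your $q_N$-cycle starts at $r_N-1$, and the probability of reaching $-1$ before re-exceeding $r_N-1$ is asymptotically $\E H/r_N$ with $\E H$ the mean strict ascending ladder height of $\trho$, which equals $\s^2/2$ here (this is exactly the constant the paper uses in Section 5.7), not $1/r_N$ in general. Your asserted limits $1-q_N\sim 1/(cN)$ and $N(1-q_Nu_Nv_N)\to \tfrac1c(\b c+\sqrt{2\a}\,\tc\coth\sqrt{2\a}\,\tc)$ are each off by this factor $\s^2/2$; the final formula \eqref{eq:lapltransPC} is only recovered because the same ladder-height constant appears in the per-cycle overshoot mean and in the per-cycle occupation-time exponent and cancels between numerator and denominator --- a cancellation your plan neither states nor uses, and which is precisely why $C\sim\cE(1/c)$ does not depend on $\s^2$. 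Second, the identification $N(1-q_Nu_Nv_N)\to\tfrac1c\bigl(\b c+\sqrt{2\a}\,\tc\coth\sqrt{2\a}\,\tc\bigr)$ is asserted, but computing $N(1-u_N)$ --- the per-up-crossing Laplace exponent of the time spent below the level --- is where all the analytic content lies; in the paper this is done via Williams' description of It\^o's measure (the integral $\int_0^{\tc}\tfrac{1}{2x^2}(1-\E(e^{-\a\tau_x(R)})^2)\dx$ computation), or, in the official proof, via Ray--Knight and Pitman--Yor. Without either that excursion-measure computation or a discrete Tak\'acs-type generating-function argument with uniform error control (and uniform integrability to pass Laplace transforms through the conditioned invariance principle you invoke in step (ii)), the plan does not yet yield \eqref{eq:lapltransPC}.
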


\begin{remark}
In the proof, we shall actually assume that $r_N = \lfloor c N \rfloor$. The results hold without this restriction, but writing the proofs would then require some more pages of technical convolutions on which we do not wish to dwell. The essential ingredient to get rid of this restriction is to sandwich our processes, depending on $r_N/N$, between two processes depending on $c-\eps$ and $c+\eps$, where $\eps > 0$ can be chosen arbitrarily small, and to see that these two processes actually converge to the process depending on $c$ when $\eps \to 0$. Depending on the cases, this is due to the absolute continuity of the laws considered or the continuity of the local time in the space variable.
\end{remark}

The proof of this result will be twofold. First, $(P_N/N^2,C_N/N)$ can be reformulated, thanks to Lemma \ref{lem:popcolRW2}, in terms of functionals of a critical random walk with a second moment. At the limit, this quantity can thus be written as a functional of the Brownian motion. More precisely, we wish to condition on $C_N > 0$, which corresponds to conditioning this random walk to hit $[r_N, \pinf)$ before $-1$. The limit will thus actually be in terms of a functional of a Brownian excursion, conditioned on hitting $(c,\pinf)$. The second part of the proof is to compute the Laplace transform of this functional, which can be done through Williams' decomposition theorem of the excursion, along with a Ray-Knight theorem and some results of Pitman and Yor \cite{PitmanYor} concerning the Laplace transforms of functionals of Bessel bridges.

\subsubsection{Convergence to the excursion measure}

Let us first write $P_N$ and $C_N$ in terms of a functional of the random walk $(S_n)$, and to make this precise, we shall introduce some more notation. Let $\cE$ be the space of excursions, that is of nonnegative c\`adl\`ag functions on $\R^+$ such that, for every $\bfe \in \cE$,
\[
\zeta(\bfe) := \sup \{t \geq 0, \, \bfe_t > 0 \} \in [0,\pinf).
\]
This space is endowed with the distance
\[
\dl(\bfe,\bfe') = \sup_{t \geq 0} |\bfe_t - \bfe'_t| + |\zeta(\bfe) - \zeta(\bfe')|,
\]
which makes it a Polish space, see \cite{LeGallExcursion}. Let $\D^0$ be the space of c\`adl\`ag functions on $\R_+$ vanishing at $0$. For $d > 0$ and $f \in \D^0$, recall that $\tau_d(f)$ is the hitting time of $(d,\pinf)$, and let
\[
\tau_{d,-}(f) = \inf \{ t \leq \tau_d(f), \, f(t) = 0 \; \& \; \forall s \in [t,\tau_d) \, f(s) \geq 0 \}
\]
and finally
\[
\tau_{d,+}(f) = \inf \{ t \geq \tau_d(f), f(t) < 0 \}.
\]
These quantities are depicted in Figure \ref{fig:exc_full}.

\begin{figure}[htb]
\centering
\includegraphics[width= \columnwidth]{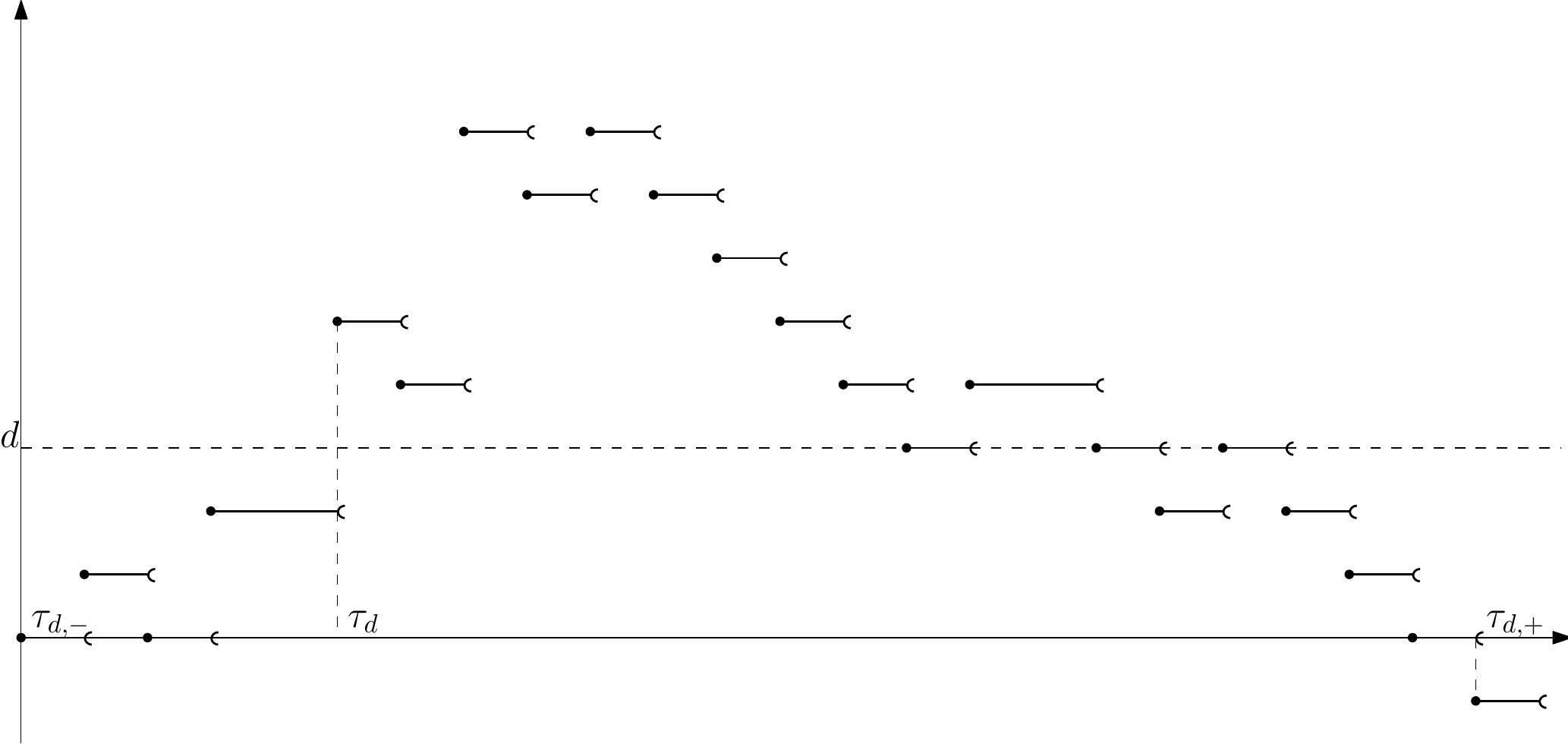}
\caption{The quantities $\tau_{d,-}(f)$, $\tau_d(f)$ and $\tau_{d,+}(f)$ shown on the exploration process of Figure \ref{fig:cont_expl_proc}.}
\label{fig:exc_full}
\end{figure}

We let $\D^0_d$ be the subset of $\D^0$ such that $\tau_d$ and $\tau_{d,+}$ are finite. For $(X,Y) \in \D^0_d \times \D^0$, we define $e_d$ the function which extracts from $X$ the first excursion which goes above level $d$, and shifts $Y$ accordingly, in formulas:
\[
e_d(X,Y) = \left ( X_{(\tau_{d,-}(X)+t)\wedge \tau_{d,+}(X)} \vee 0,Y_{(\tau_{d,-}(X)+t)\wedge \tau_{d,+}(X)} - Y_{\tau_{d,+}(X)} \right )_{t \geq 0}.
\]
Clearly, $e_d(X,Y) \in \cE \times \D^0$. Let us finally define, for $(X,Y) \in \cE \times \D^0$, 
\[
\phi_d(X,Y) = \left ( \int_0^{\zeta(X)} \unn{X_s < d} \ds, Y_{\zeta(X)} \right ) \in \R_+ \times \R,
\]
which computes the total time spent by $X$ below level $d$, as well as the value of $Y$ at the final point of $X$. This is probably a good time to state the important feature of these mappings. We let here $\W$ be the law of the Brownian motion (which is in particular a law on $\D^0_d$).

\begin{lemma} \label{lem:contmap}
\begin{itemize}
\item For $\W$-a.e. $X$ and every $Y \in \D^0$, the mapping $e_d : \D^0_d \times \D^0 \to \cE \times \D_0$ is continuous at $(X,Y)$.
\item For any $X \in \cE$ and continuous $Y \in \D^0$, the function $\phi_d$ is continuous at $(X,Y)$.
\end{itemize}
\end{lemma}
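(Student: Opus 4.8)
The plan is to treat the two maps separately: the assertion for $\phi_d$ is a direct estimate, while the one for $e_d$ will be reduced to the continuity, off a Wiener-null set, of the three first-passage functionals $\tau_d,\tau_{d,-},\tau_{d,+}$. For $\phi_d$, I would take $(X_n,Y_n)\to(X,Y)$ in $\cE\times\D^0$, so that $\eps_n:=\sup_t|X_n(t)-X(t)|\to0$, $\zeta(X_n)\to\zeta(X)$, and $Y_n\to Y$ locally uniformly. The second coordinate is immediate from $|Y_n(\zeta(X_n))-Y(\zeta(X))|\le\sup_t|Y_n(t)-Y(t)|+|Y(\zeta(X_n))-Y(\zeta(X))|$ and continuity of $Y$ at $\zeta(X)$. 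For the first coordinate, with $T:=\sup_n\zeta(X_n)<\pinf$,
\[
\Big|\int_0^{\zeta(X_n)}\un_{X_n(s)<d}\,\ds-\int_0^{\zeta(X)}\un_{X(s)<d}\,\ds\Big|\le|\zeta(X_n)-\zeta(X)|+\mathrm{Leb}\{s\le T:\,|X(s)-d|\le\eps_n\},
\]
because the two indicators can differ only where $|X(s)-d|\le\eps_n$; the last term decreases to $\mathrm{Leb}\{s\le T:\,X(s)=d\}$, which vanishes as soon as $X$ does not occupy the level $d$ for a positive length of time. That is the only situation in which $\phi_d$ is used: it is always applied to (a measurable functional of) a Brownian excursion, which a.s.\ spends no time at any fixed level, and in particular to the excursions output by $e_d$ below.

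For $e_d$, the key remark is that it is a composition of (i) the functionals $\tau_d,\tau_{d,-},\tau_{d,+}$, (ii) restriction of $(X,Y)$ to $[\tau_{d,-}(X),\tau_{d,+}(X)]$, clamped at the endpoints, and (iii) recentering of the $Y$-part by the constant $Y_{\tau_{d,+}(X)}$. Steps (ii)--(iii) are continuous wherever the three passage times are finite and continuous in $X$ and $Y$ is continuous at $\tau_{d,-}(X)$ and $\tau_{d,+}(X)$ — in particular for continuous $Y$, which is the case we need and for which the exceptional set below does not depend on $Y$. So everything reduces to producing a set $G\subset\D^0_d$ with $\W(G)=1$ on which $\tau_d,\tau_{d,-},\tau_{d,+}$ are continuous for local uniform convergence. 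I would take $G$ to consist of the continuous $X\in\D^0_d$ such that $X$ exceeds $d$ in every right-neighbourhood of $\tau_d(X)$ and, setting $g:=\sup\{t\le\tau_d(X):X_t=0\}$ and $h:=\inf\{t\ge\tau_d(X):X_t=0\}$, one has $0<g<\tau_d(X)<h<\pinf$, $X>0$ on $(g,h)$, and $X$ takes negative values in every left-neighbourhood of $g$ and in every right-neighbourhood of $h$. Each of these is an a.s.\ property of Brownian motion — a.s.\ finiteness of the passage times, regularity of every level for itself, regularity of $0$ for $(0,\pinf)$ and for $(-\pinf,0)$, and the fact that for a.e.\ path the last negative time before $\tau_d$ coincides with the left endpoint of the straddling excursion — so $\W(G)=1$, and one checks directly that $\tau_{d,-}(X)=g$, $\tau_{d,+}(X)=h$ on $G$.

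It then remains to verify that the triple $(\tau_d,\tau_{d,-},\tau_{d,+})$ is continuous at every $X\in G$. For $\tau_d$: upper semicontinuity is automatic (if $X$ exceeds $d$ near $\tau_d(X)+\eps$, so does $X_n$ for large $n$), and lower semicontinuity follows from $\sup_{[0,\tau_d(X)-\eps]}X<d$ strictly (continuity and compactness). For $\tau_{d,-}$ and $\tau_{d,+}$ I would run analogous one-sided arguments, now feeding in the already-proven $\tau_d(X_n)\to\tau_d(X)$, the strict positivity of $X$ on compact subintervals of $(g,h)$ (so $X_n>0$ there for large $n$), and the regularity at $g$ and $h$ (so $X_n$ becomes negative near those times and, by the intermediate value theorem, has a suitable zero). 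Once the three times converge, $X_n$ restricted to $[\tau_{d,-}(X_n),\tau_{d,+}(X_n)]$ converges uniformly to $X$ restricted to $[\tau_{d,-}(X),\tau_{d,+}(X)]$ with converging lifetimes (uniform continuity of $X$ on compacts), the recentering constants $Y_n(\tau_{d,+}(X_n))$ converge to $Y(\tau_{d,+}(X))$, and the $Y$-component converges in the same way, whence $e_d(X_n,Y_n)\to e_d(X,Y)$ in $\cE\times\D^0$.

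The step I expect to be the main obstacle is the careful bookkeeping in the one-sided estimates for $\tau_{d,-}$ and $\tau_{d,+}$ — locating the comparison intervals using only $\tau_d(X_n)\to\tau_d(X)$ and the defining properties of $G$, so that the exceptional set stays genuinely independent of $Y$ — together with checking that Brownian motion really enjoys all the properties collected in $G$, in particular the identifications $\tau_{d,-}=g$ and $\tau_{d,+}=h$. By comparison, the $Y$-manipulations and the estimate for $\phi_d$ are routine.
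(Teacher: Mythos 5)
Your proposal follows the same route as the paper's (very terse) proof: almost-sure oscillation/regularity properties of the Brownian path give continuity, at $\W$-a.e.\ $X$, of the three passage times $\tau_{d,-},\tau_d,\tau_{d,+}$, hence of $e_d$; and a dominated-convergence-type estimate handles $\phi_d$. Your treatment of $\phi_d$ is in fact more careful than the paper's: you correctly isolate the requirement that $X$ spend zero Lebesgue time at level $d$ (without which the clause ``for any $X\in\cE$'' is not literally true, since the indicators need not converge on $\{X=d\}$), and you rightly observe that this holds for the Brownian excursions to which the lemma is applied.

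There is, however, one step of your argument for $e_d$ that fails as written: the appeal to the intermediate value theorem to produce ``a suitable zero'' of $X_n$ near $g$. The elements of $\D^0_d$ are only c\`adl\`ag --- and in the application the approximating paths $\SN$ are step functions --- so negativity of $X_n$ near $g$ does not yield an exact zero. This matters because $\tau_{d,-}$ requires $X_n(t)=0$ exactly: a c\`adl\`ag approximation of a typical Brownian path can jump over the value $0$ (for instance the time-discretization $X_n(t)=X(\lfloor nt\rfloor/n)$ a.s.\ has no zeros after time $1/n$), in which case the set defining $\tau_{d,-}(X_n)$ is empty and your one-sided estimate for $\tau_{d,-}$ breaks down; so your proof is complete only for continuous approximants. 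To cover all of $\D^0_d$ one should either relax the condition $f(t)=0$ to $f(t)\le 0$ in the definition of $\tau_{d,-}$ (harmless for $e_d$ because of the clamp $\vee\,0$ and because the limiting path vanishes there), or note that the rescaled walks are skip-free downward, hence do hit $0$ exactly when crossing it, and argue along those specific sequences. In fairness, the paper's own two-sentence proof glosses over exactly this point; apart from it (and from recording explicitly that a.s.\ $X<d$ strictly before $\tau_d$, which you use for the lower semicontinuity of $\tau_d$), your argument is a correct fleshing-out of the paper's.
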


\begin{proof}
The first part of the statement just stems from the fact that a.s., when hitting a value, the Brownian motion oscillates around it. Hence, the times $\tau_{d,-}$, $\tau_d$ and $\tau_{d,+}$ are continuous at $\W$-almost every $X$, and the continuity of $e_d$ then readily follows from the continuity of the Brownian motion. The second part is trivial by definition of the distance $\dl$ and dominated convergence.
\end{proof}

From Lemma \ref{lem:popcolRW2}, it is now natural to define
\[
L_t = \sum_{i, \, \vs_i \leq t} \cO^N_i(r_N)
\]
be the sum of the overshoots above level $r_N$ of $(S_n)$ up to time $t \geq 0$. Now, let us define our rescaled random walk
\[
\SN_t = \frac{1}{\s N} S_{N^2t}, \quad t \geq 0,
\]
and the rescaled overshoot process
\[
\LN_t = \frac{1}{\s N} L_{N^2t}, \quad t \geq 0.
\]
Recall that $\tc = c / \s$. We may then reformulate Lemma \ref{lem:popcolRW2}, by stating that, whenever $C_N > 0$,
\begin{equation} \label{eq:PNCN}
\left. \left ( \frac{1}{N^2} P_N, \frac1N C_N \right ) \right | \{ C_N > 0 \} \eqlaw \phi_{\tc}(e_{\tc}(\SN,\s \LN)).
\end{equation}
We are now in a good position to state the main result of this section, which is quite similar to the main result of \cite{LeGallExcursion}. To this end, recall from Williams' description of It\^o's measure (see e.g. Th. 4.5 p. 499 in \cite{RY}) that the It\^o measure of the set of excursions with a maximum greater than $\tc$ is finite. Hence, we may define $\n^{> \tc}$ a probability law on $\cE$, which is the law of an excursion conditioned on having a maximum greater than $\tc$. This is in particular a semi-martingale, and we can thus define its local time. More generally, for a semi-martingale $X$, we let $\ell^a_t(X)$ is its local time at level $a$ up to time $t$, and we obviously consider a modification of $(\ell^a_t(X), a \in \R, t \geq 0)$ which is a.s. continuous in $t$ and c\`adl\`ag in $a$, see \cite{RY}.

\begin{lemma} \label{lem:limPNCN}
The convergence in distribution
\begin{equation} \label{eq:limPNCN}
\left. \left ( \frac{1}{N^2} P_N, \frac1N C_N \right ) \right | \left \{ C_N > 0 \right \} \to \left ( \int_0^{\zeta(\bfe)} \unn{\bfe_s < \tc} \ds, \frac{\s}{2} \ell^{\tc}_{\infty}(\bfe) \right )
\end{equation}
holds, where $\bfe$ has law $\n^{> \tc}$.
\end{lemma}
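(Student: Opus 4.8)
The plan is to combine the identity \eqref{eq:PNCN} with a joint invariance principle and the continuity properties of Lemma \ref{lem:contmap}. Since $\phi_{\tc}$ is continuous at $\big(\bfe,\tfrac{\s}{2}\ell^{\tc}(\bfe)\big)$ (the local time $t\mapsto\ell^{\tc}_t(\bfe)$ being continuous), and since, on $\{C_N>0\}$, one has $\tau_{\tc,-}(\SN)=0$ and $\tau_{\tc,+}(\SN)=\vsinf/N^2$, so that $e_{\tc}(\SN,\s\LN)$ retains exactly the excursion $(S_n)_{0\le n\le\vsinf}$ together with its overshoot process (both rescaled), it suffices, by \eqref{eq:PNCN}, to prove that, conditionally on $\{C_N>0\}$, $e_{\tc}(\SN,\s\LN)$ converges in law in $\cE\times\D^0$ to $\big(\bfe,\tfrac{\s}{2}\ell^{\tc}(\bfe)\big)$ with $\bfe$ of law $\n^{>\tc}$; applying $\phi_{\tc}$ to this limit gives precisely the right-hand side of \eqref{eq:limPNCN}. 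The continuity statement for $e_{\tc}$ in Lemma \ref{lem:contmap}, extended from $\W$ to $\n^{>\tc}$-a.e.\ path by the same oscillation-at-a-level argument (away from its endpoints an excursion is locally a semimartingale absolutely continuous with respect to Brownian motion), is what lets one pass from convergence of the raw walk to convergence of the extracted excursion. As permitted by the Remark after Proposition \ref{prop:convPNCN}, I would carry the argument out with $r_N=\lfloor cN\rfloor$, sandwiching between the parameters $c\pm\eps$ in the general case.

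For the first coordinate, I would use that $(S_n)$ is skip-free downwards: then $(S_n)_{0\le n\le\vsinf}$ is a random walk excursion — started at $0$, nonnegative on $[0,\vsinf)$, absorbed at $-1$ — and $\{C_N>0\}=\{\tau_{r_N-1}(S)<\vsinf\}$ is exactly the event that this excursion reaches level $r_N-1$. Its rescaling by $(N^{-2},(\s N)^{-1})$, conditioned on $\{C_N>0\}$, converges in law to the normalized Itô excursion conditioned on maximum larger than $\tc$, i.e.\ to $\n^{>\tc}$; this is a standard invariance principle for random walk excursions conditioned to be tall, in the spirit of the main result of \cite{LeGallExcursion} (equivalently, a scaling limit of the associated critical Galton–Watson trees conditioned to be tall).

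For the second coordinate, the point is that $\s\LN=\tfrac1N L_{N^2\,\cdot}$ converges, jointly with $\SN$, to $\tfrac{\s}{2}\ell^{\tc}(\bfe)$. Skip-freeness again gives that every up-crossing of level $r_N$ occurs at one of the times $\vs_i(r_N)$, so that
\[
L_n=\sum_{m<n}\big(S_{m+1}-r_N+1\big)\,\unn{S_m\le r_N-1}\,\unn{S_{m+1}\ge r_N}
\]
is a discrete additive functional of the walk, supported on the crossings of level $r_N$. By the invariance principle for additive functionals of a centred, finite-variance lattice random walk, such a functional, suitably rescaled, converges to a deterministic constant times the Brownian local time at $\tc$; the constant is obtained by combining the total weight
\[
\sum_{j\ge 0}\E\big[(\xi-j)^{+}\big]=\frac{\s^2}{2},
\]
where $\xi$ has law $\trho$ (using $\E[\xi]=0$ and $\xi\ge-1$, so that $\rho(0)$ cancels), with the normalization $\tfrac1{\s N}\,\#\{m<N^2t:\;S_m=r_N\}\to\ell^{\tc}_t(B)$ for a standard Brownian motion $B$, which yields the prefactor $\s/2$. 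Since this additive functional is a measurable functional of the path, the convergence is joint with that of $\SN$, and the continuous-mapping step of the first paragraph then proves \eqref{eq:limPNCN}.

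The hard part is the joint convergence of $e_{\tc}(\SN,\s\LN)$ under the conditioning on the vanishing-probability event $\{C_N>0\}$: one must ensure that, conditionally on this event, the whole pair — not just the walk — converges to $\big(\bfe,\tfrac{\s}{2}\ell^{\tc}(\bfe)\big)$, which calls either for a tall-excursion invariance principle strong enough to carry the localized additive functional along, or, equivalently, for an excursion-theoretic decomposition of the recurrent walk in which the excursions above $0$ of maximum exceeding $r_N-1$ form, asymptotically, a thinning governed by Itô's measure. Identifying the constant $\s/2$ in the local-time limit, and the comparison between $r_N/N$ and $c$ (handled by the sandwiching of the Remark), are the remaining technical points; the topology bookkeeping around $e_{\tc}$ and $\phi_{\tc}$ is routine given Lemma \ref{lem:contmap}.
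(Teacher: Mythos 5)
Your proposal stops exactly where the actual work lies. You correctly reduce the lemma via \eqref{eq:PNCN}, the maps $e_{\tc}$, $\phi_{\tc}$ and Lemma \ref{lem:contmap}, but you then frame the missing ingredient as a \emph{conditioned} statement -- convergence of the pair $(\SN,\s\LN)$, restricted to the tall excursion, under the vanishing-probability event $\{C_N>0\}$ -- and you explicitly leave that joint conditioned convergence unproven (``the hard part''), offering only a pointer to a tall-excursion invariance principle for the first coordinate and a generic additive-functional invariance principle for the second. Neither of these, as cited, gives the joint convergence of the walk \emph{together with} the overshoot functional under the conditioning, and that joint statement is precisely the content of the lemma; so as written the proof has a genuine gap. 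You have also misread the role of the conditioning in \eqref{eq:PNCN}: there $(S_n)$ is the \emph{unconditioned} recurrent walk on all of $\Z^+$ (not the walk absorbed at $-1$), and the map $e_{\tc}$ extracts the first excursion above $0$ that exceeds level $\tc$; by the i.i.d.\ excursion structure this extraction already realizes the law of the walk conditioned on $\{C_N>0\}$, so no conditioning on a rare event is needed on the right-hand side. This is the device that makes the proof short: one only needs an \emph{unconditional} joint invariance principle for $(\SN,\LN)$, namely $(\SN,\LN)\Rightarrow\bigl(B,\tfrac12\ell^{\tc}(B)\bigr)$, which the paper gets by citing Theorem 1.3 of \cite{PerkinsLT} (whose formula (1.2) is exactly the definition of $\LN$), followed by the continuous mapping theorem with the continuity of $e_{\tc}$ at $\W$-a.e.\ path (first part of Lemma \ref{lem:contmap}, stated for Wiener measure, not for the excursion law) and the continuity of $\phi_{\tc}$.

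Two smaller points. First, your sketch of the constant in the local-time limit is internally inconsistent: with the semimartingale local time used in the paper one has $\tfrac{\s}{N}\,\#\{m<N^2t:\ S_m=r_N\}\to\ell^{\tc}_t(B)$, not $\tfrac1{\s N}\,\#\{\cdot\}\to\ell^{\tc}_t(B)$; combined with $\sum_{j\ge0}\E[(\xi-j)^+]=\s^2/2$ this does give $\tfrac1N L_{N^2t}\to\tfrac{\s}{2}\ell^{\tc}_t$, so your target prefactor is right but your normalization as stated is off by a factor $\s^2$. Second, your proposed extension of the continuity of $e_{\tc}$ to $\n^{>\tc}$-a.e.\ paths is unnecessary once the conditioning is removed as above: Lemma \ref{lem:contmap} as stated (continuity at $\W$-a.e.\ $X$) is exactly what is needed, since $e_{\tc}$ is applied to the unconditioned Brownian limit and its image is then identified, by definition of $e_{\tc}$, as the first excursion of $B$ exceeding $\tc$, i.e.\ a path of law $\n^{>\tc}$, together with $\tfrac{\s}{2}$ times its total local time at $\tc$.
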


\begin{proof}
This is obviously obtained by passing to the limit in \eqref{eq:PNCN}, though this requires some care. It should be intuitively clear that $(\SN,\LN)$ converges to a Brownian motion and its local time at $\tc$, which is precisely the content of Theorem 1.3 of \cite{PerkinsLT}, which we may reformulate, if we are careful of the different normalization from \cite{RY}, as
\[
(\SN,\LN) \to \left ( B,\frac12 \ell^{\tc}(B) \right )
\]
weakly in $D([0,\pinf),\R^2)$, where $B$ is a standard Brownian motion. One should just take note of the two following facts:
\begin{itemize}
\item our walk is left-continuous, so the definition of $\LN$ we give is precisely Formula (1.2) in \cite{PerkinsLT}, with $x = \tc$;
\item the only slight difference is that Theorem 1.3 of \cite{PerkinsLT} deals with versions of the random walk and the local time which are linearly interpolated, unlike ours, but since the limit is continuous, and hence the limit holds for the topology of uniform convergence on the compacts, it clearly does not make any difference.
\end{itemize}

Now, the latter, the continuity of $e_{\tc}$ and the continuous mapping theorem ensure that
\[
e_{\tc}(\SN,\s \LN) \to e_{\tc} \left ( B,\frac{\s}{2} \ell^{\tc}(B) \right )
\]
weakly in $\cE \times \D_0$. But the definition of $e_{\tc}$ ensures that the first coordinate of $e_{\tc}(B,\s \ell^{\tc}(B)/2)$ is the first excursion of a Brownian motion which goes above level $\tc$, and has thus law $\n^{> \tc}$. The second coordinate is $\s/2$ times its total local time at $\tc$. The result then follows from the continuity of $\phi_{\tc}$.
\end{proof}

\subsubsection{Time spent under a level and local time of an excursion}

The last part of the proof of Proposition \ref{prop:convPNCN} is then to compute the Laplace transform of the RHS of \eqref{eq:limPNCN}. Hence, from now on, $\bfe$ is an excursion conditioned on having a maximum greater than $\tc$, that is a process with law $\n^{> \tc}$. We let
\[
(P,C) = \left ( \int_0^{\zeta(\bfe)} \unn{\bfe_s < \tc} \ds, \frac{\s}{2} \ell_{\infty}^{\tc}(\bfe) \right ).
\]

\begin{lemma}
The variable $(P,C)$ has Laplace transform given by \eqref{eq:lapltransPC}.
\end{lemma}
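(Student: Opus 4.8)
The plan is to split the conditioned excursion $\bfe$ at its first passage time $\tau := \tau_{\tc}(\bfe)$ of level $\tc$. By the version of Williams' decomposition of It\^o's excursion measure that decomposes at a first passage (rather than at the maximum; see e.g. \cite[Ch.~XII]{RY}), and using that $\n(\max>\tc)<\infty$ as recalled above, under $\n^{>\tc}$ the ascending piece $R := (\bfe_s)_{0\le s\le\tau}$ and the descending piece $M := (\bfe_{\tau+s})_{0\le s\le\zeta(\bfe)-\tau}$ are independent, $R$ being a three-dimensional Bessel process started at $0$ and killed at its first hitting time of $\tc$, and $M$ being a Brownian motion started at $\tc$ and killed at its first hitting time $T_0$ of $0$. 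Since $R$ stays in $[0,\tc]$ and touches $\tc$ only at its terminal instant, it contributes exactly $\tau$ to $P$ and nothing to the local time at $\tc$. Writing $P' := \int_0^{T_0}\unn{M_s<\tc}\,\mathrm{d}s$, this gives
\[
(P,C) \eqlaw \left ( \tau + P',\ \frac{\s}{2}\,\ell^{\tc}_{T_0}(M) \right ),
\]
with $\tau$ independent of $(P',\ell^{\tc}_{T_0}(M))$, so that $\E(e^{-\a P-\b C})$ factorizes as $\E(e^{-\a\tau})$ times $\E(\exp(-\a P'-\tfrac{\b\s}{2}\ell^{\tc}_{T_0}(M)))$.

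First I would dispatch the ascending part: $\E(e^{-\a\tau})$ is the Laplace transform of the first hitting time of $\tc$ by a Bessel$(3)$ process from $0$, i.e.\ the classical quantity $\sqrt{2\a}\,\tc/\sinh(\sqrt{2\a}\,\tc)$ already recalled in Section~\ref{sec:interpretation}. For the descending part I would set $B_s := \tc - M_s$, which is a Brownian motion from $0$ with $T_0 = \tau_{\tc}(B)$; since $M$ stays in $(0,\infty)$ up to $T_0$, the occupation times formula yields $P' = \int_0^{\tc}\ell^a_{\tau_{\tc}(B)}(B)\,\mathrm{d}a$ and $\ell^{\tc}_{T_0}(M) = \ell^0_{\tau_{\tc}(B)}(B)$. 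The first Ray--Knight theorem (\cite[Ch.~XI]{RY}) then identifies the process $Q := (\ell^{\tc-u}_{\tau_{\tc}(B)}(B))_{0\le u\le\tc}$ as a squared Bessel process of dimension $2$ started at $0$, whence, after substituting $u=\tc-a$,
\[
(P',\,\ell^{\tc}_{T_0}(M)) \eqlaw \left ( \int_0^{\tc} Q_u\,\mathrm{d}u,\ Q_{\tc} \right ).
\]
(As a sanity check, $Q_{\tc}$ is then exponential with mean $2\tc$, so $C$ is $\cE(1/c)$, exactly as announced in Section~\ref{sec:interpretation}.)

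It then remains to feed this into the explicit Laplace transform of $(\int_0^t Q_u\,\mathrm{d}u,\ Q_t)$ for a squared Bessel process $Q$ of dimension $\delta$ started at $0$, due to Pitman and Yor \cite{PitmanYor} (see also \cite[Ch.~XI]{RY}): for $\lambda,\mu\ge0$ it equals $(\cosh(\sqrt{\lambda}\,t)+\tfrac{\mu}{\sqrt{\lambda}}\sinh(\sqrt{\lambda}\,t))^{-\delta/2}$ when the exponent is $-\tfrac{\lambda}{2}\int_0^t Q_u\,\mathrm{d}u-\tfrac{\mu}{2}Q_t$. Taking $\delta=2$, $t=\tc$, $\lambda=2\a$, $\mu=\b\s$, using $c=\s\tc$, and multiplying by the ascending factor $\sqrt{2\a}\,\tc/\sinh(\sqrt{2\a}\,\tc)$, one is left with
\[
\E(e^{-\a P-\b C}) = \frac{2\a\,\tc^2}{\sinh(\sqrt{2\a}\,\tc)\,\bigl(\sqrt{2\a}\,\tc\cosh(\sqrt{2\a}\,\tc)+\b c\,\sinh(\sqrt{2\a}\,\tc)\bigr)},
\]
and dividing numerator and denominator by $\sinh^2(\sqrt{2\a}\,\tc)$ rewrites this as $(\sqrt{2\a}\,\tc/\sinh(\sqrt{2\a}\,\tc))^2\,(\sqrt{2\a}\,\tc\coth(\sqrt{2\a}\,\tc)+\b c)^{-1}$, which is exactly \eqref{eq:lapltransPC}.

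The main obstacle is the first step: one must invoke the correct form of Williams' decomposition — splitting at the first passage to $\tc$, which produces a Bessel$(3)$ path below $\tc$ and an \emph{independent} ordinary Brownian path from $\tc$ down to $0$ — rather than, say, the decomposition at the maximum, which would give a less convenient back-to-back pair of Bessel$(3)$ bridges and would not separate the two functionals. Once this is in place, everything reduces to bookkeeping with standard Bessel hitting-time, Ray--Knight, and Bessel-functional identities.
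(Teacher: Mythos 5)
Your proof is correct, but it takes a genuinely different route from the paper's. The paper decomposes the conditioned excursion at its \emph{maximum}: conditionally on $\max \bfe = x$, it writes $\bfe$ as two independent BES(3) paths run until they first hit $x$ and glued back to back, reduces to the hitting time of $1$ by Brownian scaling, identifies the local-time profile $(\ell^a_{\tau_1(R)}(R))_{a \in [0,1]}$ as a squared Bessel $2$ \emph{bridge} via a Ray--Knight theorem, computes the bridge functional with Proposition 5.10 of \cite{PitmanYor}, and finally integrates against the law $\tc\, x^{-2}\dx$ of the maximum. You instead split at the \emph{first passage} of $\tc$: a BES(3) path from $0$ stopped at $\tc$, independent of a Brownian motion from $\tc$ killed at $0$. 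This buys a clean factorization of the Laplace transform -- the hitting-time factor $\sqrt{2\a}\,\tc/\sinh\sqrt{2\a}\,\tc$ times a BESQ(2) factor obtained from the \emph{first} Ray--Knight theorem and the standard Laplace transform of $(\int_0^t Q_u\,\ddu,\,Q_t)$ for a BESQ(2) started at $0$ -- so you avoid both the scaling argument and the bridge computation, you need no final integration over the maximum, and the exponential law of $C$ becomes transparent (your algebra does recover \eqref{eq:lapltransPC} exactly). The price is that the first-passage form of Williams' decomposition under $\n^{>\tc}$ (independence of the pre- and post-$\tau_{\tc}$ pieces with the stated laws) is not literally the statement the paper cites (Theorem 4.5, p.~499 of \cite{RY} is the decomposition at the maximum); it is standard, following from the Markov property of It\^o's measure together with the identification of the pre-$\tau_{\tc}$ part as a BES(3), but you should cite or justify that version explicitly rather than attribute it loosely to the same theorem. (Also, a small slip in your closing remark: the paper's maximum decomposition produces first-passage BES(3) paths, not Bessel bridges; the bridge only enters at its Ray--Knight step.) Your route is in fact close in spirit to the discrete three-piece heuristics the paper sketches in its last section.
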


\begin{proof}
Let us explain how to construct such an excursion $\bfe$. From Williams' decomposition (see \cite[Th 4.5 p. 499]{RY}), conditionally on its maximum $M$, which has a ``law'' with density $1/(2x^2) \unn{x > 0}$, the Brownian excursion has the law of two independent Bessel 3 processes, until they hit $M$, put back-to-back. Hence, if $R$ is a Bessel 3 process, then
\begin{equation} \label{eq:williams}
\E(\exp - \a P - \b C) = 2 \tc \int_{\tc}^{\pinf} \frac{1}{2x^2} \E \left ( \exp - \a \int_0^{\tau_x(R)} \unn{R_s < \tc} \ds - \frac{\b \s}{2} \ell_{\tau_x(R)}^{\tc}(R) \right )^2 \dx
\end{equation}
and we are thus led to compute, for a fixed $x \geq \tc$,
\[
g(\a,\b) := \E \left ( \exp - \a \int_0^{\tau_x(R)} \unn{R_s < \tc} \ds - \frac{\b \s}{2} \ell_{\tau_x(R)}^{\tc} \right ).
\]
Now, recall that the Bessel processes have the Brownian scaling property, i.e. $R$ has the same law as $Q=(x R_{t/x^2})_{t \geq 0}$, so the occupation time formula provides
\[
\ell^a_{\tau_x(Q)}(Q) = x \ell^{a/x}_{\tau_1(R)}(R)
\]
and thus, still by Brownian scaling,
\begin{align*}
g(\a,\b) & = \E \left ( \exp - \a \int_0^{\tau_x(Q)} \unn{Q < \tc} \ds - \frac{\b \s}{2} \ell_{\tau_x(Q)}^{\tc}(Q) \right ) \\
& = \E \left ( \exp - \a \int_0^{x^2 \tau_1(R)} \unn{R_{s/x^2} < \tc/x} \ds - \frac{\b \s}{2} x \ell^{\tc/x}_{\tau_1(R)}(R) \right ) \\
& = \E \left ( \exp - \a x^2 \int_0^{\tau_1(R)} \unn{R_s < \tc/x} \ds - \frac{\b \s}{2} x \ell^{\tc/x}_{\tau_1(R)}(R) \right ).
\end{align*}
Hence, we are now led to compute, for $u,v \geq 0$ and $0 \leq r \leq 1$,
\[
h(u,v) := \E \left ( \exp - u \int_0^{\tau_1(R)} \unn{R_u < r} \du - v \ell_{\tau_1(R)}^r(R) \right ).
\]
By the occupation time formula (note that the quadratic variation of a Bessel process is $t$),
\[
h(u,v) = \E \left ( \exp - u \int_0^r \ell_{\tau_1(R)}^u(R) \du - v \ell_{\tau_1(R)}^r(R) \right ),
\]
which we may rewrite as
\[
\E \left ( \exp - \int_0^1 \ell_{\tau_1(R)}^x(R) \nu(\ddx) \right )
\]
where $\nu(\ddx) = u \un_{[0,r]}(\ddx) + v \dl_r$. From a Ray-Knight Theorem\footnote{Surprisingly hard to find in the literature: it can be seen as a consequence of Theorem 4 in \cite{WilliamsDecomp} along with the Williams' decomposition, Theorem 3.11 in \cite{RY}. This statement is also given in \cite{MansuyYor}, p.42, along with a direct proof.}, the process $(\ell_{\tau_1}^x(R), x \in [0,1])$ is a squared Bessel 2 bridge, and thus, the latter quantity is precisely computed in Prop. 5.10 in \cite{PitmanYor}. After easy but long computations, one obtains
\[
\E \left ( \exp - \int_0^1 \ell_{\tau_1(R)}^x(R) \nu(\ddx) \right ) = \frac{\sqrt{2 u}}{(1-r)(2 v \sinh \sqrt{2 u} r + \sqrt{2 u} \cosh \sqrt{2 u} r) + \sinh \sqrt{2 u} r}.
\]
Hence
\begin{align*}
g(\a,\b) & = \frac{x \sqrt{2 \a}}{(1-\tc/x)(\b \s x \sinh \sqrt{2 \a} \tc + x \sqrt{2 \a} \cosh \sqrt{2 \a} \tc) + \sinh \sqrt{2 \a} \tc} \\
& = \frac{\sqrt{2 \a}}{\sinh \sqrt{2 \a} \tc} \frac{x}{1 + (x-\tc)(\b \s + \sqrt{2 \a} \coth \sqrt{2 \a} \tc)}.
\end{align*}
The result then readily follows from plugging this formula in \eqref{eq:williams}.
\end{proof}

\begin{remark}
Recall the interpretation we gave of the variable $(P,C)$ in Section \ref{sec:interpretation}. This formulas can thus be interpreted by saying that the excursion can be split into three parts: a part between 0 and the hitting time of $\tc$, which has the law of a Bessel 3 process. Another part between the last hitting time of $\tc$ and 0, with the same law. And in between, there is an exponential ``quantity'' of excursions above and below $\tc$; each contributes independently to a microscopic random amount, given by the above Laplace transform, to the time spent below $\tc$. See also the next section for a discrete version of these heuristics.
\end{remark}

Proposition \ref{prop:convPNCN} then follows from the result just proven and Lemma \ref{lem:limPNCN}. To conclude the proof of Theorem \ref{th:conv2}, all we need to check now is that Equation \eqref{eq:cumu2} has a unique solution which is done as for Theorem \ref{th:conv1}.

\subsection{Another way to the result}

To conclude, let us present another way to compute the limit of $(P_N/N^2,C_N/N)$ knowing that $C_N > 0$. This method is more elementary but requires more steps, and might be seen as more natural -- at least to the author, who firstly used it to derive the result. Once again, the goal is to compute the time spent under $r_N$, and the sum of the overshoots, both up to the hitting time of $-1$, for the random walk $(S_n)$ conditioned on hitting $[r_N,\pinf)$ before $-1$.

To obtain such an excursion of the random walk, we wait until we see $(S_n)$ hit $[r_N,\pinf)$, and we consider the excursion from $0$ to $-1$ straddling this time. We can cut this excursion in three pieces:
\begin{itemize}
	\item a first piece, where the walk goes from 0 to $[r_N,\pinf)$;
	\item a third piece, where the walk goes from $r_N$ to $-1$;
	\item a second piece in between these two, where the walk goes from $r_N$ to $r_N$ without going back to 0, a certain amount of time.
\end{itemize}
By the same reasonings as above, it should be clear that the first piece, after rescaling, converges to the first piece of a Brownian motion going from 0 to $\tc$ while remaining positive. From Williams' decomposition, Theorem 3.11 in \cite{RY}, this has the law of a Bessel 3 process $R$, and thus the time it takes to hit $\tc$ has the law of $\tau_{\tc}(R)$, which has, as we mentioned, Laplace transform
\[
\frac{\sqrt{2 \a} \tc}{\sinh \sqrt{2 \a} \tc}.
\]
The third part obviously accounts for the same independent quantity.

Now, let us study what happens in the middle. We may forget\footnote{However, this relies on the walk having a second moment. Otherwise, it could have a macroscopic jump from below $r_N$ to above.} about the time for the random walk to go from $[r_N,\pinf)$ back to $r_N - 1$, and first consider the walk at $r_N - 1$. From Formulas (3) p. 187 and (b) p. 181 in \cite{Spitzer}, the size of an overshoot has finite mean $\s^2/2$, so \eqref{eq:hyprN2} and a gambler's ruin estimate (see Lemma 5.1.3 in \cite{LawlerLimic}) show that the walk goes to $-1$ before coming back at $r_N$ with probability $\s^2/(2cn)$. Hence, if we let $L_N$ to be the number of such excursions from $r_N$ to $r_N$ without going back to $-1$, we deduce therefrom that $L_N / N$ converges to a variable $L$ with an exponential law $\cE(\s^2/(2c))$.

Now, conditionally on $L$, i.e., loosely, $L_N \approx L N$, the number of colonies $C_N$ is the sum of $L_N$ independent overshoots, and thus, by the law of large numbers, $C_N/N$ converges to $L$ times the mean size of an overshoot, i.e. $C = \s^2 L /2$.

Finally, still conditionally on $L$, we want to know the time spent below $r_N$ by this piece of the walk. At the limit, after rescaling and by similar reasonings as in the above section, it can be seen as the time spent below 0 by a Brownian motion
\begin{itemize}
	\item up until it has accumulated a local time $\s L$ at 0,
	\item and conditioned on not hitting $- \tc$ before this time (what has positive probability).
\end{itemize}
We thus have to compute this quantity. But to construct a Brownian motion up to a local time of $\ell$, all we need is a Poisson point process $(e_s)_{s \in [0,\ell]}$ with intensity $\n$, the It\^o measure, and then glue these excursions together (see \cite{RY}). Hence, by thinning, to construct such a conditioned Brownian motion, we take a Poisson point process $(e_s)_{s \in [0,\ell]}$ with intensity $\n_{> - \tc}$, the restriction of $\n$ to the excursions with minimum greater than $- \tc$, and then glue these excursions together. Let $R(e)$ be the length of an excursion $e$. Then the time spent by this conditioned Brownian motion under $0$ is
\[
\sum_{0 \leq s \leq \ell} R(e_s) \unn{\inf e < 0}
\]
and, by symmetry and using the exponential formula for Poisson point processes (\cite{RY}, p. 476), for $\a \geq 0$,
\begin{align*}
\E \left ( \exp - \a \sum_{0 \leq s \leq \ell} R(e_s) \unn{\inf e < 0} \right ) & = \exp - \ell \int (1 - e^{- \a R(u)}) \unn{ \inf u < 0} \: \n_{ > - \tc}(\ddu) \\
& = \exp - \ell \int (1 - e^{- \a R(u)}) \unn{ 0 < \sup u < \tc} \: \n(\ddu) \\
& = \exp - \ell \int_{x = 0}^{\tc} \int_{y=0}^{\pinf} (1 - e^{- \a y}) \: \m(\ddx,\ddy)
\end{align*}
where $\m$ is the image of $\n$ (or $\n_+$, the It\^o measure of the unsigned excursion) by $e \mapsto (\sup e,R(e))$. Once again, Williams' decomposition of the Brownian excursion tells that its maximum $M$ has ``law'' $1/(2x^2) \unn{x > 0} \dx$, and that conditioned on this maximum, it has the law of two independent Bessel 3 processes $R$ and $R'$ put back to back. Therefore
\begin{align*}
\int_{x = 0}^{\tc} \int_{y=0}^{\pinf} \left ( 1 - e^{- \a y} \right ) \: \m(\ddx,\ddy) & = \int_{x = 0}^{\tc} \frac{1}{2 x^2} \E \left ( \left ( 1 - e^{- \a (\tau_x(R) + \tau_x(R'))} \right ) \right ) \dx \\
& = \int_{x = 0}^{\tc} \frac{1}{2 x^2} \left ( 1 - \E(e^{- \a \tau_x(R)})^2 \right ) \dx \\
& = \int_{x = 0}^{\tc} \frac{1}{2 x^2} \left ( 1 - \frac{2 \a x^2}{\sh^2 x \sqrt{2 \a}} \right ) \dx \\
& = \frac{1}{2 \tc} \left ( \frac{\sqrt{2 \a} \tc}{\tanh \sqrt{2 \a} \tc } - 1 \right ).
\end{align*}
Putting the pieces together, this is just saying that, conditionally on $L = 2 C / \s^2$, the variable $P$ has Laplace transform
\[
\left ( \frac{\sqrt{2 \a} \tc}{\sinh \sqrt{2 \a} \tc} \right )^2 \exp - \s L \frac{1}{2 \tc} \left ( \frac{\sqrt{2 \a} \tc}{\tanh \sqrt{2 \a} \tc } - 1 \right ) = \left ( \frac{\sqrt{2 \a} \tc}{\sinh \sqrt{2 \a} \tc} \right )^2 \exp - C \frac1c \left ( \frac{\sqrt{2 \a} \tc}{\tanh \sqrt{2 \a} \tc } - 1 \right )
\]
which is precisely what we remarked after Theorem \ref{th:conv2}. The main advantage of this method is thus probably that it provides directly the law of $P$ knowing $C$, which is not obvious when merely looking at Formula \eqref{eq:lapltransPC}.

\bibliographystyle{abbrv}
\bibliography{Bibli}

\end{document}